\theoremstyle{definition}
\newtheorem{thm}{Theorem}[section]
\newtheorem{cor}[thm]{Corollary}
\newtheorem{lemma}[thm]{Lemma}
\newtheorem{propn}[thm]{Proposition}
\newtheorem{defn}[thm]{Definition}
\newtheorem*{rmk}{Remark}
\newtheorem*{claim}{Claim}
\newcommand\nc{\newcommand}
\nc\Span{\text{\rm Span}}
\nc\Id{\text{Id}}
\nc\rank{\text{rank}}
\nc \cc {\mathbb{C}}
\nc \ff {\mathbb{F}}
\nc \hh {\mathbb{H}}
\nc \nn {\mathbb{N}}
\nc \pp {\mathbb{P}}
\nc \qq {\mathbb{Q}}
\nc \rr {\mathbb{R}}
\nc \zz {\mathbb{Z}}
\nc\cA{\mathcal{A}}
\nc\cB{\mathcal{B}}
\nc\cC{\mathcal{C}}
\nc\cD{\mathcal{D}}
\nc\cF{\mathcal{F}}
\nc\cG{\mathcal{G}}
\nc\cH{\mathcal{H}}
\nc\cK{\mathcal{K}}
\nc\cL{\mathcal{L}}
\nc\cO{\mathcal{O}}
\nc\cP{\mathcal{P}}
\nc\cS{\mathcal{S}}
\nc\csf{\mathcal{S}\mathcal{F}}
\nc\fm{\mathfrak{m}}
\nc\fp{\mathfrak{p}}
\nc\Tr{\text{Tr}}
\nc\into{\hookrightarrow}
\nc\st{\text{ s.t. }}
\nc\intense[1]{\textcolor[rgb]{1.00,0.00,0.00}{\textbf{#1}}}
\renewcommand{\(}{\left(}
\renewcommand{\)}{\right)}
\nc\Mat{\text{\rm Mat}}
\nc\GL{\text{\rm GL}}
\nc\SU{\text{\rm SU}}
\nc\SO{\text{\rm SO}}
\nc\SL{\text{\rm SL}}
\nc\Sp{\text{\rm Sp}}
\nc\EL{\text{\rm EL}}
\nc\GEM{\text{\rm GEM}}
\nc\Alt{\text{\rm Alt}}
\nc\Sym{\text{\rm Sym}}
\renewcommand{\(}{\left(}
\renewcommand{\)}{\right)}
\nc\inject{\hookrightarrow}
\nc{\mattwo}[4]{\left[\begin{array}{cc} #1  & #2\\  #3 & #4 \\ \end{array} \right]}
\nc{\matthree}[9]{\left[\begin{array}{ccc} #1  & #2 & #3\\  #4 & #5 & #6 \\ #7 & #8 & #9 \\ \end{array} \right]}
\nc{\vecttwo}[2]{\left[\begin{array}{c} #1 \\ #2 \\ \end{array} \right]}
\nc{\vectthree}[3]{\left[\begin{array}{c} #1 \\ #2\\  #3 \\ \end{array} \right]}
\nc{\del}{\partial}
\nc\onto{\twoheadrightarrow}
\nc\const{\text{const}}
\nc\rrp{\mathbb{RP}}
\nc\ul{\underline}
\nc\ol{\overline}
\nc\uline{\underline}
\nc\oline{\overline}
\nc\oset{\overset}
\nc\uset{\underset}
\nc\heart{\heartsuit}
\nc\spade{\spadesuit}
\nc\club{\clubsuite}
\nc\marg[1]{\marginnote{\boxed{\text{#1}}}}
\nc\margq[1]{\marginnote{\textcolor[rgb]{1.00,0.00,0.00}{#1}}}
\nc\Char{\text{Char}}
\nc\Frac{\text{Frac}}
\nc\wo{\backslash}
\nc\diag{\text{diag}}
\nc\wtl{\widetilde}
\nc\nsubgp{\triangleleft}
\nc\Cay{\text{Cay}}
\nc\Hom{\text{Hom}}
\nc\Gp{\text{Gp}}
\nc\Set{\text{Set}}
\nc\la{\langle}
\nc\ra{\rangle}
\nc\Spec{\text{Spec}}
\nc\ad{\text{ad}}
\nc\Tor{\text{Tor}}
\nc\wht{\widehat}
\nc\ddx[2]{\frac{\partial {#1}}{\partial {#2}}}
\nc\dddx[3]{\frac{\partial^2 {#1}}{\partial {#2}\partial{#3}}}
\nc\mult{\text{mult}}
\nc\supp{\text{supp}}
\nc\sign{\text{sign}}
\nc\tr{\text{tr}}
\nc\stab{\text{stab}}
\nc\im{\text{im}}
\nc\sech{\text{sech}}
\nc\Ind{\text{Ind}}
\nc\tsf{\text{sf}}
\nc\Kh{\text{Kh}}
\nc\red{\text{red}}
\title{Khovanov homology and binary dihedral representations for marked links}
\author{Sherry Gong}
\begin{document}
\maketitle

\begin{abstract} We introduce a version of Khovanov homology for alternating links with marking data, $\omega$, inspired by instanton theory. We show that the analogue of the spectral sequence from Khovanov homology to singular instanton homology introduced in \cite{KM_unknot} for this marked Khovanov homology collapses on the $E_2$ page for alternating links. We moreover show that for non-split links the Khovanov homology we introduce for alternating links does not depend on $\omega$; thus, the instanton homology also does not depend on $\omega$ for non-split alternating links.

Finally, we study a version of binary dihedral representations for links with markings, and show that for links of non-zero determinant, this also does not depend on $\omega$.
\end{abstract}

\section{Introduction}

Throughout this paper, we shall work over a field of characteristic $2$.

Let $L \subset S^3$ be a link, and let $\omega$ be a one dimensional submanifold of $S^3$ with boundary in $L$, thought of as the Poincare dual of $w_2(Q)$, where $Q$ is an $SO(3)$ bundle on the link complement, $S^3 \backslash L$. 

In \cite{KM_unknot}, Kronheimer and Mrowka introduced a singular instanton homology $I(L,\omega)$ for a link $L$ with singular bundle data given by $\omega$, and they constructed a spectral sequence with $E_2$ page the Khovanov homology of the link, which abuts to $I^\#(L, \emptyset) = I(L \cup H, \omega_0)$, that is, the instanton homology of $L$ with a Hopf link, $H$, at infinity, and $\omega_0$ a single arc between the two components of $H$. They further show that for alternating knots, the spectral sequence collapses on the $E_2$ page.

For $P_L$ a projection of the link $L$, Kronheimer and Mrowka's spectral sequence can be generalised to all $\omega$, so that it becomes a spectral sequence whose $E_2$ page is an object we call $H(P_L,\partial \omega)$, and which abuts to $I^\#(L, \omega) = I(L \cup H, \omega_0 \cup \omega)$, where $H(P_L,\partial \omega)$ is constructed as follows.

Let $A$ denote the $\zz/2$-algebra $\zz/2[x]/x^2$. Consider the cube of resolutions of of a link projection $P_L$ with $n$ crossings, where each vertex $v \in \{0,1\}^n$ of the cube is assigned a resolution $D_v$, by resolving each crossing as in Figure \ref{resolutions_fig}.

\begin{figure}[ht!]
\centering
\includegraphics[width=70mm]{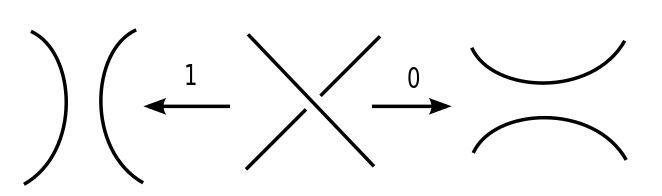}
\caption{}
\label{resolutions_fig}
\end{figure}

\begin{wrapfigure}{R}{35mm}
\centering
\includegraphics[width=25mm]{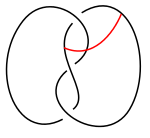}
\label{trefoil_with_omega}
\end{wrapfigure}
At each vertex of the cube, we then have an unlink and some marked points on the unlink representing $\partial \omega$, the boundary of $\omega$. Let $C(P_L,\partial \omega)$ be the complex that assigns to a resolution of $c$ components $A^{\otimes c}$ if each of the components has an even number of endpoints of $\omega$, and otherwise assigns that resolution $0$. For example, for the trefoil with $\omega$ to the right, the resolutions are as given in Figure \ref{trefoil_omega_resn}. The only resolutions whose unlink has more than one component for which the $A^{\otimes c}$ has not been replaced with $0$ is the $(1,0,0)$ resolution, because for that one the arc $\omega$ has endpoints on the same component of the unlink.

\begin{figure}[ht!]
\centering
\includegraphics[width=90mm]{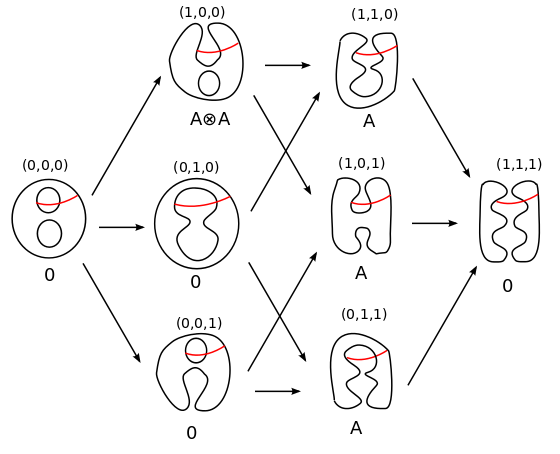}
\caption{}
\label{trefoil_omega_resn}
\end{figure}

The differentials in $C(P_L,\partial \omega)$ are along edges of the cube as depicted in figure \ref{trefoil_omega_resn} and given by the merge map $m:A \otimes A \to A$ and the split map $\Delta:A \to A \otimes A$, where
\[m(1 \otimes 1) = 1 \text{, } m(1 \otimes x) = m(x \otimes 1) = x \text{, } m(x\otimes x) = 0,\]
and 
\[\Delta(1) = 1 \otimes x + x \otimes 1 \text{, } \Delta(x) = x \otimes x,\]
when the source and target are both non-zero. 

\begin{defn} The marked Khovanov homology of $P_L$ with $\omega$, $H(P_L, \partial \omega)$ is the homology of the complex $C(P_L,\partial\omega)$. 
\end{defn}

For a link projection $P_L$ with marking data $\omega$ and a basepoint $p \in P_L$, we may also consider the reduced complex $C_{\red}(P_L,\partial\omega)$ formed by, at each vertex, replacing the $A$ in $A^{\otimes n}$ corresponding to the component with the basepoint with $A/\la x \ra$, similarly to the usual reduced Khovanov complex. The differentials in this complex are defined similarly, replacing $m$ and $\Delta$ with the induced maps $m_{\red}$ and $\Delta_{\red}$ on the quotients. The following definition is the reduced version of the above.

\begin{defn} The reduced marked Khovanov homology of $P_L$ with $\omega$, $H_{\red}(P_L, \partial \omega)$ is the homology of the complex $C_{\red}(P_L,\partial\omega)$. 
\end{defn}

In general, $H(P_L,\partial\omega)$ is not a marked link invariant, in that it is not invariant with respect to moving an endpoint of $\omega$ along a component, nor is it invariant with respect to Reidermeister II and III moves.

In this paper, however, we show an invariance result for alternating link projections.

\begin{thm} For an alternating projection $P_L$ of an alternating link $L$, with marking data $\omega$, $H(P_L,\partial \omega)$ is a marked link invariant; that is, it is invariant with respect to different projections for the same alternating link and with respect to moving an endpoint of $\omega$ along a component of the link. Moreover, for non-split alternating links, it does not depend on $\omega$. For based links, the same is true for $H_\red(P_L,\partial \omega)$.
\label{naive_khov_invar}
\end{thm}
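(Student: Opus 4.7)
My plan is to prove Theorem \ref{naive_khov_invar} in three stages: (a) invariance under moving endpoints of $\omega$ along components of $L$; (b) invariance under Reidemeister moves between alternating projections; and (c) independence from $\omega$ for non-split alternating links.

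For stage (a), moving an endpoint of $\omega$ along $L$ past a crossing flips the $\omega$-parity of some components at each resolution in the cube, altering the set of good (nonzero) vertices. I would construct an explicit chain map between $C(P_L, \partial \omega)$ and $C(P_L, \partial \omega')$ by analysing this local change near the crossing, and verify it is a chain homotopy equivalence via a contracting homotopy on the mapping cone. This step does not require alternating-ness. For stage (b), I follow the standard Bar-Natan/Khovanov proof: after using (a) to isotope all endpoints of $\omega$ away from the local region affected by the R-move, the complex reduces there to the ordinary Khovanov complex of the tangle, and the usual chain homotopy equivalences extend globally. The same strategy handles $C_{\red}(P_L, \partial \omega)$ with minor modifications to track the basepoint, and together (a) and (b) give the first two assertions of the theorem.

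Stage (c) is the substantive content. By (a) and (b) it suffices to compare $H(P_L, \partial \omega)$ with $H(P_L, \emptyset) = \Kh(L; \ff_2)$. Shumakovitch's theorem over $\ff_2$ gives $\Kh(L; \ff_2) \cong \Kh_{\red}(L; \ff_2) \otimes A$, and for alternating $L$ one has $\dim \Kh_{\red}(L; \ff_2) = \det(L)$, so $\dim \Kh(L; \ff_2) = 2 \det(L)$. My goal is the matching formula $\dim H(P_L, \partial \omega) = 2 \det(L)$ for every $\omega$ on a non-split alternating $L$. To achieve this, I adapt the standard thinness arguments to the marked setting: introduce a Bar-Natan/Turner-style deformation of the Khovanov differential on $C(P_L, \partial \omega)$ that respects the restriction to good vertices, and identify the homology of this deformed differential with a simple invariant of $L$ depending only on its connected components (not on $\omega$). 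Thinness of $H(P_L, \partial \omega)$ will then follow from the associated filtration spectral sequence, and together with non-negativity of graded ranks this forces $\dim H(P_L, \partial \omega) = 2 \det(L)$, giving the desired independence.

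The key obstacle is in stage (c): constructing the Bar-Natan/Turner-style deformation compatibly with the good/bad vertex structure, verifying that the perturbed differential still squares to zero on $C(P_L, \partial \omega)$, and identifying its homology independently of $\omega$. In particular, the extra term cannot create non-trivial contributions between good and bad vertices (since the bad vertices have been set to zero), which tightly constrains the allowed deformation. Once this is arranged, the thinness and dimension count should follow the standard Bar-Natan spectral sequence analysis adapted to the alternating case, and non-splitness of $L$ enters precisely to ensure the deformed homology has the minimal rank $2$ rather than a larger $\omega$-dependent rank.
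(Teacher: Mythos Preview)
Your Stage~(a) contains a genuine error: you assert that invariance under sliding an endpoint of $\omega$ across a crossing ``does not require alternating-ness.'' This is false, and the paper gives an explicit counterexample (the non-alternating unknot projection in Figure~\ref{non_alt_eg}) where sliding an endpoint changes the dimension of $H(P_L,\partial\omega)$ from $2$ to $6$. So the ``explicit chain map'' you propose to write down simply does not exist in general; there is no chain homotopy equivalence between $C(P_L,\partial\omega)$ and $C(P_L,\partial\omega')$ for arbitrary projections. The alternating hypothesis must enter precisely here, and your plan gives no indication of how.

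The paper's mechanism for Stage~(a) is a ``dropped crossing'' complex $C(P_L,\partial\omega,x)$: one shows the original complex is quasi-isomorphic to a complex built on the $(n-1)$-cube of resolutions of the other crossings, leaving $x$ unresolved. Since this smaller complex does not see where on the strands near $x$ the endpoint sits, sliding across $x$ is then automatic. The quasi-isomorphism goes through a splitting of an exact sequence of cubes into pieces $D,E,F$, and the crucial step is that a certain diagonal correction term $\gamma\beta$ vanishes; the proof of $\gamma\beta=0$ is a case analysis that uses alternating-ness in an essential way (a $2a\to 1a$ transition forces a non-alternating two-crossing subdiagram). This is the real content of Stage~(a), and your proposal does not approach it.

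Your Stages~(b) and~(c) are also structured differently from the paper. Rather than proving Reidemeister invariance directly and then separately proving $\omega$-independence via a Bar-Natan--Turner deformation, the paper first proves $\omega$-independence for non-split alternating projections (by successively removing arcs of $\omega$ using the dropped-crossing trick at crossings between components), and then observes that $H(P_L,\partial\omega)=H(P_L,\emptyset)$ is ordinary Khovanov homology, whose link-invariance is already known. This avoids entirely the issue that Reidemeister moves between alternating diagrams pass through non-alternating ones, which your Stage~(b) does not address. Your Stage~(c) deformation idea is interesting but, as you yourself note, speculative: you have not verified that the Bar-Natan differential respects the good/bad vertex structure, and in fact the extra term $1\mapsto 1\otimes 1$ in the Bar-Natan comultiplication can map a good vertex to a bad one, so $d^2=0$ on the quotient is not automatic.
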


Analogously to the spectral sequence in \cite{KM_unknot}, there is a spectral sequence from $H(P_L,\partial\omega)$ to $I^\#(L, \omega)$. In \cite{KM_unknot}, they show that the spectral sequence collapses for quasi-alternating knots $K$. We extend that to a result for alternating link projections $L$ with marking data $\omega$:

\begin{thm} For alternating link projections $P_L$, the spectral sequence from $H(P_L,\partial\omega)$ to $I^\#(L, \omega)$ collapses on the $E_2$ page.
\end{thm}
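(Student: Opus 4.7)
The plan is a rank sandwich combining Theorem~\ref{naive_khov_invar} with a Morse-theoretic lower bound on $\dim I^\#(L,\omega)$ coming from the marked binary dihedral representations studied later in this paper. In any $\ff_2$ spectral sequence the total rank is non-increasing, with equality across all pages if and only if every higher differential vanishes; hence collapse follows once one establishes
\[
\dim_{\ff_2} I^\#(L,\omega) \;\geq\; \dim_{\ff_2} H(P_L, \partial\omega).
\]

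By Theorem~\ref{naive_khov_invar}, for a non-split alternating projection $P_L$ the dimension of $H(P_L, \partial\omega)$ is independent of $\omega$, so it equals $\dim_{\ff_2} \Kh(L)$. To produce the matching lower bound on the instanton side, I would use the count of $\omega$-marked binary dihedral representations of $\pi_1(S^3 \setminus L)$. As in \cite{KM_unknot}, after a suitable holonomy perturbation these representations index the generators of a Morse-theoretic chain complex for $I^\#(L,\omega)$, so that
\[
\dim_{\ff_2} I^\#(L,\omega) \;\geq\; \#\{\omega\text{-marked binary dihedral reps}\}.
\]
The $\omega$-invariance of this count for links of non-zero determinant (proved later in the paper) together with the case $\omega=\emptyset$, where \cite{KM_unknot}'s unmarked collapse theorem already identifies $\dim I^\#(L,\emptyset)$ with $\dim_{\ff_2} \Kh(L)$, upgrades this to $\dim I^\#(L,\omega) \geq \dim_{\ff_2} \Kh(L)$. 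Chaining with Theorem~\ref{naive_khov_invar} closes the sandwich and forces collapse.

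For split alternating projections (which have $\det(L) = 0$, and so fall outside the non-zero-determinant regime of the binary dihedral result), I would reduce to the non-split case by writing $L = L_1 \sqcup L_2$ and observing that the cube-of-resolutions complex $C(P_L, \partial\omega)$ decomposes as a tensor product of the complexes for $L_1$ and $L_2$ (with an appropriate twist if an arc of $\omega$ crosses between the two pieces, in which case the tensor factor corresponding to such a resolution is set to zero by the parity rule in the construction) and that $I^\#$ admits a compatible splitting. Collapse of each factor then implies collapse of the whole spectral sequence.

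The hardest step is the instanton lower bound in the marked setting: one must enumerate the flat singular $SO(3)$ connections on $S^3 \setminus L$ with $w_2$ Poincar\'e-dual to $\omega$, verify that generic holonomy perturbations do not destroy any of the generators, and then identify the resulting count with the marked binary dihedral representations whose $\omega$-invariance is proved in a separate section of the paper. The split case is a secondary technical issue, requiring a careful Künneth decomposition for both sides of the spectral sequence that respects the marking data.
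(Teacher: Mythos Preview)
Your proposal has two serious gaps that make the argument unworkable as stated.

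First, the Morse inequality runs in the wrong direction. If a set of representations indexes the generators of a chain complex computing $I^\#(L,\omega)$, that gives an \emph{upper} bound $\dim_{\ff_2} I^\#(L,\omega) \leq \#\{\text{generators}\}$, not the lower bound you need. There is no general mechanism by which a subset of critical points (here, the binary dihedral ones among all flat $SU(2)$ connections) forces homology classes to survive. Zentner's work cited in the paper concerns the special class of $SU(2)$-simple knots precisely because for general links the representation variety contains non-dihedral irreducibles, and even when it does not, one would need to rule out differentials, which is exactly the collapse statement you are trying to prove. Numerically the counts do not match either: for a knot, Klassen gives $(\det K - 1)/2$ non-abelian binary dihedral classes, while $\dim_{\ff_2}\Kh(K) = 2\det K$ for alternating $K$.

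Second, you invoke ``\cite{KM_unknot}'s unmarked collapse theorem'' for links with $\omega = \emptyset$, but Kronheimer--Mrowka establish collapse only for quasi-alternating \emph{knots} (their argument passes through $KHI$ and the Alexander polynomial, which requires a single component). Extending collapse from knots to links, even in the unmarked case, is the actual content of the theorem you are proving; assuming it is circular.

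The paper's proof avoids both issues by a completely different route: induction on the number of components. Given a non-split alternating $L$ with $l+1$ components, one adds a crossing adjacent to an existing crossing between distinct components to produce an alternating $L'$ with $l$ components. The cube for $L'$ then contains the cube for $L$ as its $(0,*)$ face, and a careful filtered-mapping-cone argument (with an explicit Khovanov-level splitting $g$ of the projection $h$) shows that collapse for $L'$ forces collapse for $L$. The base case is the knot result from \cite{KM_unknot}. No representation-variety lower bound is used.
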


Combining this theorem with Theorem \ref{naive_khov_invar}, we have:

\begin{cor} For non-split alternating links $L$, the instanton homology $I^\#(L, \omega)$ does not depend on $\omega$.
\end{cor}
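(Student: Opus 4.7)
The plan is to combine the two preceding results directly; the corollary really is just the composition of the collapse result with Theorem \ref{naive_khov_invar}, so the sketch is short.

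First, I would apply the second theorem to the alternating link $L$: for any projection $P_L$ and any marking $\omega$, the spectral sequence
\[ E_2 = H(P_L, \partial \omega) \;\Longrightarrow\; I^\#(L, \omega) \]
collapses at the $E_2$ page. Since we are working throughout over a field of characteristic $2$, collapse at $E_2$ means $E_2 = E_\infty$, and there are no extension problems, so this yields an identification
\[ I^\#(L, \omega) \;\cong\; H(P_L, \partial \omega) \]
as (filtered) vector spaces over $\mathbb{F}_2$. In particular, the total dimension of $I^\#(L, \omega)$ equals the total dimension of $H(P_L, \partial \omega)$.

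Next, I would invoke Theorem \ref{naive_khov_invar}: when $L$ is a non-split alternating link, $H(P_L, \partial \omega)$ is independent of $\omega$ (and of the choice of alternating projection $P_L$). Substituting this into the isomorphism from the previous step, one obtains that $I^\#(L, \omega)$ is likewise independent of $\omega$ for non-split alternating $L$, which is exactly the statement of the corollary.

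The only genuine bookkeeping issue, rather than an obstacle, is matching the notion of ``does not depend on $\omega$'' on the two sides. On the Khovanov side, Theorem \ref{naive_khov_invar} gives an isomorphism of graded vector spaces, and the spectral sequence collapse at $E_2$ respects the relevant grading, so the $\omega$-invariance of $H$ transports to $\omega$-invariance of $I^\#$ in the same sense. Beyond this, there is no real work to do: the corollary is formally a consequence of the two theorems already established, and requires no further input from instanton theory.
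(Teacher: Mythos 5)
Your argument is the same as the paper's: collapse of the spectral sequence at $E_2$ identifies the rank of $I^\#(L,\omega)$ with that of $H(P_L,\partial\omega)$, and Theorem \ref{naive_khov_invar} (via Corollary \ref{indep_omega}) makes the latter independent of $\omega$. The only minor caution is that the paper states the conclusion as independence of the \emph{rank} of $I^\#(L,\omega)$, which over a field is all the collapse argument gives you; your remark about transporting gradings is more than is needed and more than the collapse alone justifies.
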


In \cite{KM_filtrations}, Kronheimer and Mrowka also exhibited filtrations $q$ and $h$ on the Khovanov complex for an alternating link such that the Khovanov differential increases $h$ by $1$ and preserves $q$, and such that the difference between the instanton differential and the Khovanov differential has order $ \geq 1$ with respect to the $h$ filtration and $\geq 2$ with respect to the $q$ filtration. They moreover use this to show that the isomorphism types of the pages of the spectral sequence with respect to the $q$ and $h$ filtrations are link invariants.

We extend the $q$ filtration result to links with certain $\omega$. Specifically, consider $\omega$ corresponding to singular bundle data $P_\Delta$ satisfying that on the cobordism corresponding to each diagonal of the cube, we have $\cP(w_2(P_\Delta)) \equiv 0 \pmod{4}$ where $\cP$ is the Pontrjagin square. Here $P_\Delta$ is defined in section \ref{q_section}, as in \cite{KM_unknot}, to be a certain principal $PU(2)$-bundle on a non-Hausdorff space $X_\Delta$ coming from $(X,\Sigma)$ where $\Sigma$ is the cobordism and $X$ is the ambient space, $S^3 \times \rr$. 

For such $\omega$, we define a $q$ filtration on the modified Khovanov complex, so that the instanton differential has order $\geq 0$. We use this to show that the isomorphism class of the first page of the spectral sequence from $H(P_L,\omega)$ to the instanton homology is a tangle invariant of the tangle obtained by considering the part of the link outside of a ball containing $\omega$.

In \cite{SS_tqa}, Scaduto and Stoffregen studied the homology of the complex $C(P_L,\partial \omega)$, which they called $Hd(D,\omega)$, and exhibited its relation via a spectral sequence to the framed instanton homology of the double branched cover of the link. This spectral sequence is the framed instanton theory analogue of the spectral sequence in \cite{KM_unknot}. They moreover conjecture a relation between $Hd(D,\omega)$ and a twisted Khovanov homology similar to those in \cite{BLS}, \cite{Jae13}, and \cite{Rob15}, which is an invariant of links with marking data, and which also has a spectral sequence relating it to the framed instanton homology.

We will also look at modifying the space of binary dihedral representations to account for $\omega$: recall that the binary dihedral group $BD \subset SU(2) \simeq S^3 \subset \hh$ is given by $BD = S^1_A \cup S^1_B$, where $S^1_A = e^{I\theta}$ and $S^1_B = Je^{I\theta}$. 

Recall that a dihedral subgroup of $SO(3)$ is a a group generated by rotations about a fixed axis and reflections about the orthogonal plane to that axis, and a binary dihedral representation $\rho:G \to SU(2)$ is a representation whose image in $SO(3)$ via the canonical map $SU(2) \to SO(3)$ is contained in a dihedral subgroup.

The space of binary dihedral representations of the fundamental group of a link complement, which are conjugate to representations $\rho:\pi_1(S^3 \backslash K) \to BD \subset SU(2)$, has been studied as a link invariant. In \cite{Klassen}, Klassen showed that for $\Gamma = \pi_1(S^3-K)$ for a knot $K$, the number of conjugacy classes of non-abelian homomorphisms $\Gamma \to BD$ is
\[(|\Delta_K(-1)|-1)/2\]
where $\Delta_K(t)$ is the Alexander polynomial of $K$.

In \cite{Zentner}, Zentner studied knots with the property that all of its $SU(2)$ representations are binary dihedral and called such knots ``$SU(2)$-simple''. He showed that if a knot $K$ is $SU(2)$-simple and satisfies a certain genericity hypothesis, then the higher differentials on the instanton complex vanish.

In this case, we study a modification of the space of binary dihedral representations for links with $\omega$. Note that all the meridians of each component of $L$ are conjugate to each other in $\pi_1(S^3 \backslash L)$. Moreover, elements of $S^1_B$ can only be conjugate to other elements of $S^1_B$, so either all meridians of a given component of $L$ go to $S^1_B$, or they all go to $S^1_A$. For the representation to be non-abelian, they must go to $S^1_B$ for at least one component.

To modify the link invariant of binary dihedral representations to account for $\omega$ in the spirit of the representations spaces that arise in instanton homology, we consider the space of representations of $\pi_1(S^3 \backslash (L \cup \omega))$ which take the meridians around $\omega$ to $-1$.

We will primarily want to consider the representations which map the meridians around the link components to $S^1_B$. Let $L = \cup L_i$ be the components of $L_i$.

\begin{defn} For a link $L$, let the spaces of marked binary dihedral representations modulo conjugation be denoted by
\[R(L,\omega) = \{\rho:\pi_1(S^3 \backslash (L \cup \omega)) \to BD | \rho(\mu_\omega)=-1 \in S^1_A\}/\text{conj}\]
and
\[R_B(L,\omega) = \{\rho:\pi_1(S^3 \backslash (L \cup \omega)) \to BD |\rho(\mu_{L_i}) \in S^1_B \text{, } \rho(\mu_\omega) = -1 \in S^1_A\}/\text{conj}.\]
where $\mu_{L_i}$ is a meridian around $L_i$ and $\mu_\omega$ is a meridian around $\omega$.
\end{defn}

These are marked link invariants. That is,

\begin{lemma}
The dependence on $\omega$ of the spaces $R(L,\omega)$ and $R_B(L,\omega)$ can be reduced to the parity of the number of endpoints of $\omega$ on each component.
\end{lemma}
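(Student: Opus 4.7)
My plan is to reformulate $R(L,\omega)$ and $R_B(L,\omega)$ in terms of data intrinsic to $S^3\backslash L$ and then to show that the remaining dependence on $\omega$ is through its Poincar\'e-Lefschetz dual class $[\omega]\in H^2(S^3\backslash L;\zz/2)$, which is itself determined by the parities of the endpoints.

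First I would use that $-1\in Z(SU(2))$ is central: composing any $\rho\in R(L,\omega)$ with the double cover $SU(2)\to SO(3)$ sends $\mu_\omega$ to the identity, so the composite descends to a representation $\bar\rho:\pi_1(S^3\backslash L)\to SO(3)$ with dihedral image. Conversely, given such a $\bar\rho$, the associated flat $SO(3)$-bundle admits a flat $SU(2)$-lift on $S^3\backslash(L\cup\omega)$ with holonomy $-1$ around $\mu_\omega$ exactly when its second Stiefel-Whitney class equals the Poincar\'e-Lefschetz dual of $\omega$; when lifts exist they form a torsor over an index-two subgroup of $H^1(S^3\backslash(L\cup\omega);\zz/2)$ cut out by the value at $\mu_\omega$. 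This presents $R(L,\omega)$ as the total space of a torsor-valued map over the conjugacy classes of dihedral $SO(3)$-representations of $\pi_1(S^3\backslash L)$ with prescribed $w_2$; the whole picture depends on $\omega$ only through $[\omega]$. For $R_B(L,\omega)$, the additional constraint $\rho(\mu_{L_i})\in S^1_B$ is a condition on $\rho$ at meridians of $L$, not of $\omega$, and is therefore preserved by any identification that leaves these generators alone.

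Next I would verify that $[\omega]$ is determined by the parities. After removing an open tubular neighbourhood $N(L)$, the group $H_2(S^3\backslash \text{int}\, N(L);\zz/2)$ is spanned by the boundary tori $[\partial N(L_i)]$ subject only to the single relation $\sum_i[\partial N(L_i)]=0$. The pairing of $[\omega]$ with $[\partial N(L_i)]$ is the mod-$2$ intersection number, which equals the number of endpoints of $\omega$ on $L_i$ reduced mod $2$. Since these parities automatically sum to zero mod $2$ (the total number of endpoints of $\omega$ is even), they are compatible with the single relation and thus pin down $[\omega]$ uniquely.

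The main obstacle I anticipate is making the identification canonical: given $\omega_1,\omega_2$ with matching parities, I want an explicit homeomorphism $R(L,\omega_1)\simeq R(L,\omega_2)$ rather than a mere bijection of isomorphism types. The plan is to fix a mod-$2$ $2$-chain $F\subset S^3\backslash L$ with $\partial F=\omega_1+\omega_2$ (which exists by the cohomological calculation above) and use $F$ to transport the choice of lift between $S^3\backslash(L\cup\omega_1)$ and $S^3\backslash(L\cup\omega_2)$. Two choices of $F$ differ by a closed mod-$2$ $2$-cycle in $S^3\backslash L$, and the resulting ambiguity is absorbed by the $H^1(S^3\backslash L;\zz/2)$-torsor action on lifts, yielding a well-defined identification after passing to conjugacy classes. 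The argument for $R_B$ is the same, since the meridian conditions on $\mu_{L_i}$ are preserved under this transport.
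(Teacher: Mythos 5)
Your argument is essentially correct, but it proves the lemma by a genuinely different route from the paper. The paper works entirely with a Wirtinger presentation of $\pi_1(S^3\backslash(L\cup\omega))$: it writes out the binary dihedral relations arc by arc, observes that an overpass of $\omega$ imposes the same relation as no strand at all, cancels pairs of endpoints on a single arc, and shows that sliding an endpoint past a crossing is just the change of variables $\theta_{i,j+1}\mapsto-\theta_{i,j+1}$. That elementary bookkeeping is not incidental: it sets up the marked-crossing formalism and the linear system $M\theta=v$ on which the rest of the section (the determinant theorems) depends. Your route instead packages $R(L,\omega)$ as representations of the central extension of $\pi_1(S^3\backslash L)$ by $\zz/2$ classified by the dual class $[\omega]\in H^2(S^3\backslash L;\zz/2)$ (equivalently, as $SU(2)$-lifts of dihedral $SO(3)$ representations with $w_2$ prescribed by $\omega$), and then pins down $[\omega]$ by its pairings with the boundary tori, which are exactly the endpoint parities; the perfectness of that pairing (free $H_1$, so $H^2(M;\zz/2)\cong\Hom(H_2(M;\zz/2),\zz/2)$) should be stated, but it is routine. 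This is more conceptual, explains \emph{why} only the mod-2 class of $\omega$ can matter, and generalizes beyond $BD$; the cost is that it leans on the standard lifting/central-extension machinery rather than being self-contained, and it does not produce the explicit presentation the paper needs later. One caveat: your final claim that the identification is independent of the bounding chain $F$ ``after passing to conjugacy classes'' is not justified — two choices of $F$ differ by twisting $\rho$ by a $\{\pm1\}$-valued character of $\pi_1(S^3\backslash L)$, and such a twist is in general not realized by conjugation. Fortunately the lemma only asserts that the spaces agree, not that they agree canonically, so a bijection for each choice of $F$ suffices; note also that the twist preserves both the $BD$-image condition and the $S^1_B$ condition on link meridians (since $-S^1_B=S^1_B$), which is the correct reason $R_B$ is handled by the same argument.
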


In particular, if $L$ is a knot, then these invariants do not depend on $\omega$.

We shall prove that similarly to the Khovanov homology we defined, for non-split alternating links, and more generally, for links of non-zero determinant, this invariant does not depend on $\omega$:

\begin{thm} For a link $L$ with non-zero determinant and singular bundle data $\omega$, the number of conjugacy classes of binary dihedral representations in $R_B(L,\omega)$ does not depend on $\omega$.
\end{thm}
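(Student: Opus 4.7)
By the preceding lemma, $|R_B(L,\omega)|$ depends on $\omega$ only through the parity vector $p=(p_i)\in(\zz/2)^{|L|}$ recording the parity of the number of endpoints of $\omega$ on each component $L_i$; since each arc of $\omega$ has two endpoints, $\sum p_i\equiv 0\pmod 2$.  It therefore suffices to show that for any two valid parity vectors $p$ and $p'$, the counts $|R_B(L,p)|$ and $|R_B(L,p')|$ agree.

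My approach is to unpack $R_B$ via the Wirtinger presentation of a diagram of $L$.  Writing $\rho(\mu_j)=Je^{I\theta_j}$ for each arc $a_j$, a crossing with overstrand $a_i$ and understrand arcs $a_j,a_k$ imposes $\theta_k\equiv 2\theta_i-\theta_j\pmod{2\pi}$, and each endpoint of $\omega$ on $L$ imposes $\theta_{\mathrm{after}}\equiv\theta_{\mathrm{before}}+\pi\pmod{2\pi}$; conjugation acts as a global shift $\theta_j\mapsto\theta_j+c$ composed with an optional sign flip $\theta_j\mapsto-\theta_j$.  The crucial reduction is the substitution $\phi_j=2\theta_j\pmod{2\pi}$: the crossing relations retain the identical form $\phi_k\equiv 2\phi_i-\phi_j$, whereas the $\omega$-shift relations reduce to $\phi_{\mathrm{after}}\equiv\phi_{\mathrm{before}}$, i.e.\ vacuous.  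The resulting $\phi$-system is precisely a presentation of $\Hom(H_1(\Sigma(L);\zz),S^1)$; because $\det L\neq 0$ makes $H_1(\Sigma(L);\zz)$ finite of order $|\det L|$, there are exactly $|\det L|$ solutions $(\phi_j)$, independent of $\omega$.

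For each $\phi$-solution, the $\theta$-lifts are parametrized by signs $s_j\in\zz/2$ with $\theta_j=\phi_j/2+s_j\pi$, and both the crossing relations and $\omega$-endpoint relations become affine $\zz/2$-linear equations on the vector $(s_j)$.  Changing $\omega$ only shifts the inhomogeneous part of this system; the lemma guarantees invariance within a fixed parity class, so to compare across parity classes one must show that the shift coming from a parity vector with $\sum p_i\equiv 0$ always lies in the image of the homogeneous sign-system matrix $A$.  Summing the valid sign-solutions across the $|\det L|$ $\phi$-classes and quotienting by the conjugation action then produces a count of conjugacy classes that does not depend on $\omega$.

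\textbf{Main obstacle.}  The decisive step is this last verification: identifying the cokernel of the $\zz/2$ sign-system matrix $A$ and showing that the obstruction to consistency depends on $\omega$ only through the single scalar $\sum p_i\pmod 2$.  The hypothesis $\det L\neq 0$ enters here in an essential way, forcing the $\phi$-Wirtinger matrix to have maximal rank modulo the component-cycle kernel, so that the cokernel of $A$ is spanned precisely by the component-cycle classes whose $\zz/2$-pairing with a parity vector computes $\sum p_i$; without this, extra cokernel classes could distinguish different parity vectors.
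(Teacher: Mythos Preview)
Your approach has a genuine gap at precisely the point you flag as the ``main obstacle,'' and the resolution you suggest there is not correct.

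The sign matrix $A$ you set up has rows indexed by crossings, and the row for a crossing reads $s_j + s_{j+1}$ where $a_j, a_{j+1}$ are the two \emph{understrand} arcs (the overstrand sign $s_{i'}$ drops out since $2s_{i'}\pi \equiv 0$). So $A$ is block-diagonal by link component, each block being the cycle matrix of that component's arc cycle, and its cokernel over $\zz/2$ is $(\zz/2)^c$, one summand per component, \emph{regardless of $\det L$}. The obstruction to lifting a fixed $\phi$-solution is therefore the vector $(p_i + q_i(\phi))_i \in (\zz/2)^c$, where $q_i(\phi)$ is a sum of the half-lift discrepancies $c_j$ along component $i$. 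Your assertion that this collapses to the single scalar $\sum_i p_i$ is wrong: the pairing of the component-$i$ cokernel class with the inhomogeneous term computes $p_i$ individually, not their sum. To finish along your lines you would need to show that $|\{\phi : q(\phi) = p\}|$ is independent of the parity vector $p$, and nothing you have written (nor the hypothesis $\det L \neq 0$, which constrains $M$ over $\rr$ but says nothing directly about $A$ over $\zz/2$) gives this.

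The paper's argument is both simpler and bypasses this issue entirely. First, since the $\theta$-system is the \emph{affine} system $M\theta = v$ over $(\rr/2\pi\zz)^n$, its solution set is either empty or a coset of $\{M\theta = 0\}$; hence $|R_B(L,\omega)|$ is either $0$ or $|R_B(L,\emptyset)|$, with no counting needed. Second, non-emptiness is shown by real linear algebra: $\det L \neq 0$ means $\det M_- \neq 0$, so $\mathrm{rank}_\rr M \geq n-1$, and $\mathrm{im}(M) = (\ker M^T)^\perp$. The paper exhibits an explicit generator $u \in \ker M^T$ (the vector of crossing signs $\pm 1$) and observes that among the $\rr^n$-lifts $(\pm\pi,0,\ldots,\pm\pi,0,\ldots)$ of $v$ one can always choose signs so that $u\cdot v = 0$, since there are an even number of $\pm\pi$ entries and all entries of $u$ are $\pm 1$. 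This is where $\det L \neq 0$ actually enters.
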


We will also show a partial converse to this: For a link $L$ with determinant zero, the number of conjugacy classes in $R_B(L,\omega)$ does depend on $\omega$. In particular, $R_B(L,\emptyset) \neq \emptyset$, but we will show that there is $\omega$ such that $R_B(L,\omega)$ is empty.


\section{Marked points Khovanov homology}

Given a link projection $P_L$, with a finite set of marked points $\partial \omega$, recall in the introduction, we defined a complex $C(P_L,\partial\omega)$, which was like the Khovanov complex, but with $0$ instead of $A^{\otimes n}$ at vertices of the cube where a component has an odd number of marked points. In the latter case, where a component has an odd number of marked points, we say that $\partial \omega$ ``kills'' the vertex in the modified Khovanov complex.

\begin{lemma} The $C(P_L,\partial\omega)$ defined above is actually a complex, ie, $d^2=0$.
\end{lemma}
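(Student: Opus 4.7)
The plan is to check $d^2=0$ face-by-face on the cube of resolutions. Observe that $d$ is obtained from the ordinary Khovanov differential $d_\Kh$ by zeroing out every map whose source or target is a killed vertex, and recall $d_\Kh^2=0$ in characteristic $2$. Fix a 2-face with vertices $v_{00}, v_{01}, v_{10}, v_{11}$. If $v_{00}$ or $v_{11}$ is killed, both path compositions in the modified complex vanish; if $v_{01}$ and $v_{10}$ are both killed, so do both compositions; and if all four vertices are good, the usual Khovanov identity already gives that the two compositions sum to zero. The only subtle case is when $v_{00}, v_{11}$ are good and exactly one of $v_{01}, v_{10}$ is killed --- say $v_{01}$ --- where the $v_{01}$-path is zero by construction and we must show the $v_{10}$-path is also zero.

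Set $O(v)=\#\{\text{components at }v\text{ with an odd number of marks of }\partial\omega\}$, so that $v$ is good iff $O(v)=0$. A short case check on merge and split edges (using that each edge of the cube changes the number of components by exactly $\pm1$, by planarity) shows that $O$ changes along any edge by $0$ or $\pm 2$. From $O(v_{00})=O(v_{11})=0$ and $O(v_{01})\geq 2$ we deduce $O(v_{01})=2$: the edge $v_{00}\to v_{01}$ splits some even-mark component $X$ into two odd-mark pieces $Y_1,Y_2$ at the crossing $c_2$, and the edge $v_{01}\to v_{11}$ merges $Y_1,Y_2$ back at the crossing $c_1$. This rigidly fixes the local picture: all four strand endpoints at both $c_1$ and $c_2$ lie inside the single component $X$ at $v_{00}$.

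Since $c_1$ touches only the component $X$ at $v_{00}$, planarity of the diagram outside $c_1$ forces the edge $v_{00}\to v_{10}$ to again split $X$, and then $v_{10}\to v_{11}$ must merge the two resulting components. The $v_{10}$-path composition is therefore $m\circ\Delta\colon A\to A$, which is $0$ in characteristic $2$ since $m\circ\Delta(1)=2x=0$ and $m\circ\Delta(x)=x^2=0$. Equivalently and more cleanly, the same computation shows the $v_{01}$-path in the \emph{unmarked} Khovanov complex is already $m\circ\Delta=0$, so $d_\Kh^2=0$ immediately forces the $v_{10}$-path to vanish as well. The main obstacle is identifying the rigid structure at $v_{01}$ --- once that is in hand, everything else is a short bookkeeping check.
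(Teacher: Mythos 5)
Your proof is correct and follows essentially the same route as the paper's: reduce to 2-faces, dispose of the easy killed-vertex configurations, and in the one subtle case identify the rigid local picture (one component at the good corners splitting into the two odd-marked components at the killed corner via both crossings) so that the surviving path is $m\circ\Delta=0$ in characteristic $2$. Your $O(v)$ bookkeeping and the closing remark that $d_{\Kh}^2=0$ alone forces the surviving path to vanish are slightly more explicit than the paper's wording, but the argument is the same.
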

\begin{proof} Just as in the usual Khovanov homology, we need only show that squares in the cube commute.
\begin{diagram}
D_{10} & \rTo & D_{11}\\
\uTo & & \uTo \\
D_{00} & \rTo & D_{01}\\
\end{diagram}
If $\partial \omega$ does not kill any of the corners in the square, then the edge maps are the same as those in the usual Khovanov complex, so the square commutes. If $D_{00}$ or $D_{11}$ is killed, or if $D_{10}$ and $D_{01}$ are both killed, then the square obviously commutes. 

The only remaining case is that $D_{00}$ and $D_{11}$ are both not killed, but one of $D_{10}$ and $D_{01}$ is killed. Without loss of generality assume that it is $D_{10}$. Then there must be a two components in the $D_{10}$ diagram with an odd number of marked points each, and these two components must be merged into one component in both $D_{00}$ and $D_{11}$. This is only possible when the square is the projection of a two component unlink that has two crossings between the components, corresponding to the two dimensions of the square. 

Thus, the other map $D_{00} \to D_{01} \to D_{11}$ is $m \circ \Delta$, which is $0$, because
\[m(\Delta(1))=m(1 \otimes x+x \otimes 1)=0,\]
because we are over a field of characteristic 2, and 
\[m(\Delta(x))=m(x \otimes x) =0.\]

\end{proof}

At this point, we have not assumed that the projection is alternating, but it already makes sense to consider the spectral sequence from $H(P_L,\partial \omega)$ to $I^\#(L,\omega)$ analogous to the one in \cite{KM_unknot}. However, $H(P_L,\partial \omega)$ is not an independent of the choice of projection for $L$, nor of choice of where the endpoints of $\omega$ are on the components. For a counterexample to the latter, see Figure \ref{non_alt_eg}. 

\begin{figure}[ht!]
\centering
\includegraphics[width=60mm]{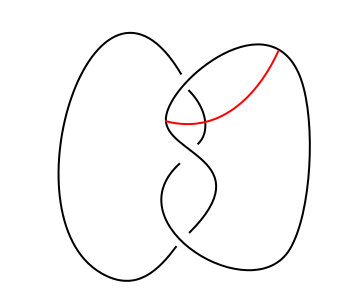}
\caption{In this non-alternating projection for an unknot, with $\omega$ being the arc shown, it is easy to see that $H(P_L,\partial \omega)$ has dimension $6$ over $\zz/2\zz$. However, if we used a projection with no crossings, we would get dimension $2$.}
\label{non_alt_eg}
\end{figure}

\subsection{Marked points and alternating links}
\label{alternating_marked_points_section}

We have now seen an example that shows that our marked point Khovanov homology may change as an endpoint of $\omega$ slides along a component in the link. We shall see, however, that this cannot happen for alternating link projections:

\begin{propn}
For an alternating link projection, the complex above is well defined up to not-necessarily-degree-preserving quasi-isomorphism. That is the operations of sliding an endpoint across a crossing does not change the complex (up to not-necessarily-degree-preserving quasi-isomorphism).
\label{propn well def}
\end{propn}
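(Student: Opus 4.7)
The plan is to analyze the slide locally at the crossing $c^*$ across which the endpoint of $\omega$ moves, by decomposing the cube of resolutions at $c^*$ and building an explicit chain map of the resulting mapping cones.

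Let $\partial\omega$ and $\partial\omega'$ denote the marking data before and after the slide. First, I would write both $C(P_L,\partial\omega)$ and $C(P_L,\partial\omega')$ as mapping cones at $c^*$,
\[
C(P_L,\partial\omega) \;\cong\; \mathrm{Cone}\bigl(d_{c^*}\colon C^0(\partial\omega) \to C^1(\partial\omega)\bigr),
\]
where $C^i$ is the subcomplex generated by resolutions that assign $c^*\mapsto i$. Arrange conventions so that the $0$-resolution of $c^*$ is the one that locally joins the two arcs on either side of the moving endpoint; then throughout the $c^*=0$ subcube the slid endpoint stays on the same component and $C^0(\partial\omega) = C^0(\partial\omega')$ as complexes. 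In the $c^*=1$ subcube the endpoint jumps between two distinct local components, so the parities on exactly those two components flip at every vertex.

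Next, I would construct a (possibly degree-shifting) chain map $\psi\colon C^1(\partial\omega)\to C^1(\partial\omega')$ which is a quasi-isomorphism and which commutes with $d_{c^*}$ up to chain homotopy. Together with the identity on $C^0$, this assembles into a chain map of mapping cones; the five lemma applied to their long exact sequences then reduces the proposition to proving that $\psi$ is a quasi-isomorphism, which I would verify by producing a homotopy inverse via the analogous construction with $\partial\omega$ and $\partial\omega'$ swapped.

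The main obstacle is producing $\psi$, since the sets of killed vertices in $C^1(\partial\omega)$ and $C^1(\partial\omega')$ generally differ: at each vertex of the $c^*=1$ subcube where the killing status switches, one side is zero where the other is a full $A^{\otimes c}$. A case analysis shows that the mismatch only arises at vertices where the two parity-flipped components have matching parities (both even on one side, both odd on the other), so I would try to exploit this pairing. The alternating hypothesis should enter here through the checkerboard coloring at $c^*$, which constrains how the two parity-flipped local components thread through the rest of the diagram and thus forces the killed/unkilled mismatches in the $c^*=1$ subcube to be concentrated on pairs of vertices connected by an acyclic piece of the differential, so that the cancellation along those pairs defines $\psi$ where it is not simply the identity. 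Verifying this acyclicity should reduce to a square-by-square check analogous to the $d^2=0$ lemma proved above, using crucially the identity $m\circ\Delta=0$ in characteristic two.
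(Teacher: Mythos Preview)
Your decomposition into mapping cones at $c^*$ and the observation that $C^0(\partial\omega)=C^0(\partial\omega')$ are correct, and the five-lemma framework is sound in principle. But the construction of $\psi\colon C^1(\partial\omega)\to C^1(\partial\omega')$ is the entire content of the proposition, and you have not carried it out. Your suggestion that the alternating hypothesis, via a checkerboard colouring, ``forces the killed/unkilled mismatches \dots\ to be concentrated on pairs of vertices connected by an acyclic piece of the differential'' is not substantiated: you would need to identify those pairs, prove the relevant subcomplex is acyclic, check that the resulting cancellation is globally compatible so that $\psi$ is a chain map on all of $C^1$, and then separately verify that $\psi$ intertwines the two cone maps $d_{c^*}^{\omega}$ and $d_{c^*}^{\omega'}$ up to an explicit homotopy. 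None of this is routine; the $d^2=0$ lemma you invoke is a local square-by-square check, whereas building $\psi$ requires coherence across the whole $c^*=1$ subcube. Note also that the move on $C^1$ is not a slide along a component of the resolved diagram but a jump between two distinct local strands, so there is no obvious induction on crossing number available to you.

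The paper takes a different route that sidesteps this difficulty. Rather than comparing $C^1(\partial\omega)$ and $C^1(\partial\omega')$ directly, it introduces an auxiliary complex $C(P_L,\partial\omega,c^*)$ built on the cube with $c^*$ left \emph{unresolved}; this object manifestly does not see which side of $c^*$ the sliding endpoint sits on. The work is then to show that the full complex $\mathrm{Cone}(C^0\to C^1)$ is quasi-isomorphic to this dropped-crossing complex. This is done by splitting each vertical column into summands $D,E,F$, reducing the cone to a complex on $D\oplus E$ with an extra diagonal term $\gamma\beta$, and then proving $\gamma\beta=0$. The alternating hypothesis enters precisely and only in the case analysis establishing $\gamma\beta=0$: the obstruction is a local two-crossing configuration which, if present, would force a non-alternating Hopf-link-like pattern. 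Your proposal never isolates a comparable invariant intermediate object, which is why you are unable to say concretely where alternation is used.
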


To that end, first let us show that for alternating link projections, we can compute our homology dropping one of the crossings.

\begin{rmk}Throughout this discourse in all diagrams, the ``vertical maps'' will always correspond to the one crossing we are trying to drop.
\end{rmk}

\begin{defn}

Note that if we `drop' one crossing in a cube a resolutions, ie, leave it unresolved, we still have unlinks, because a projection with only one crossing can only be an unlink. We will call a partial resolution that is an unlink a ``pseudo-diagram resolution''.

For crossing $x$ on the alternating link projection $P_L$ with marking data $\partial \omega$, we define a complex $C(P_L,\partial \omega, x)$ whose underlying groups are the same as before: if there are $k$ crossings total, form the $k-1$ dimensional cube of resolutions from resolving all crossings except $x$. Place $A^{\otimes n}$ at a vertex where there are $n$ components to the unlink there, unless some component has an odd number of markings, in which case place $0$. These are the chain groups.

Let us define the differential $d_C$. There is a map for each edge, and edges correspond to crossings, so let us define the differential corresponding to edge $y$ thus:
\begin{itemize}
\item Type 0: The number of components in the pseudodiagram resolution changes and neither source nor target is killed by $\omega$: In this case take the maps to be $m$ or $\Delta$, the merging or splitting maps of the Khovanov complex.
\item Type 1: The number of components of the pseudodiagram resolution changes and at least one of source or target is killed by $\omega$: In this case the map is 0.
\item Type 2: The number of components of the pseudodiagram resolution does not change. In this case, the map is $0$.
\end{itemize}
\label{drop cpx}
\end{defn}

\begin{lemma} For a link projection, the cube with a dropped crossing $(C,d_C)$ defined above forms a complex; since we are working over $\zz/2$, this is saying that the cube commutes.
\label{dropped_commutes}
\end{lemma}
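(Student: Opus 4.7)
The plan is to verify $d_C^2=0$ by showing that each 2-face of the $(k-1)$-dimensional cube commutes over $\zz/2$. Fix a square spanned by two crossings $y,z\neq x$, with corners $P_{ij}$ for $(i,j)\in\{0,1\}^2$; each of its four edges has Type 0 (edge map $m$ or $\Delta$), Type 1, or Type 2 (the latter two giving the zero map).

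First I would dispatch the easy cases. If any corner is killed by $\omega$, or if both paths around the square pass through a zero edge, both compositions vanish and the square commutes. If all four edges are Type 0 with no corner killed, the maps are $m$ and $\Delta$ on the appropriate tensor factors, and the square is locally identical to a standard Khovanov square (the unresolved crossing $x$ enters only as a ``background'' through-strand, affecting component counts uniformly across the four corners); commutativity then follows from the Frobenius-algebra identities on $A=\zz/2[x]/x^2$---associativity of $m$, coassociativity of $\Delta$, and the Frobenius identity $\Delta\circ m=(m\otimes\mathrm{id})\circ(\mathrm{id}\otimes\Delta)$---exactly as in the proof for the usual Khovanov cube.

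The genuinely new case is when the edge types are mixed, so that one path contains a zero edge while the other is a nonzero composition of $m$'s and $\Delta$'s; here one must show the nonzero composition also vanishes over $\zz/2$. A Type 2 edge at $y$ in a corner corresponds to the rest-pairing at $y$ being $\{N_y,S_y\},\{E_y,W_y\}$ after tracing through the unresolved $x$, which is only possible because the through-strand at $x$ couples the four $y$-endpoints in a particular way. Analyzing the planar structure of $P_L$ with small neighborhoods of $x, y, z$ removed---sitting in the pair-of-pants $S^2$ minus those three neighborhoods---together with the smoothings of $y$ and $z$, one shows that the mixed-type configurations compatible with planarity are exactly those for which the nonzero composition has the form $m\circ\Delta$ acting on a single $A$-factor (that is, the split $\Delta$ and the subsequent merge $m$ act on the same component). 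Such a composition vanishes over $\zz/2$ since $m(\Delta(1))=2x=0$ and $m(\Delta(x))=m(x\otimes x)=0$, giving commutativity.

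The main obstacle is this topological case analysis: enumerating the planar matchings of the twelve boundary endpoints on the three boundary circles of the pair-of-pants, and verifying that the configurations one would need to rule out (those forcing the nonzero path to be $\Delta\circ m$ on a single factor, a composition which does \emph{not} vanish over $\zz/2$) are indeed excluded by planarity combined with the constraints imposed by the through-strand at $x$.
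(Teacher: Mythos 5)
Your plan matches the paper's strategy in outline: reduce to squares, dispose of the squares where both compositions obviously vanish, reduce the all-Type-0 squares to the ordinary Khovanov cube, and observe that in the remaining mixed squares the danger is precisely a nonzero path of the form $\Delta\circ m$ on a single component opposite a zero path. But you have not proved the one statement that carries the whole lemma: you assert that planarity excludes the $\Delta\circ m$ configuration and then explicitly defer that exclusion as ``the main obstacle.'' That exclusion \emph{is} the content of the lemma --- everything else is bookkeeping plus $m\circ\Delta=0$ --- so as written the proposal has a genuine gap rather than a complete argument.

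To close it, argue as the paper does, organizing by the number $a_i$ of active components (components meeting one of the two resolved crossings) at the four corners. The dangerous square is the one whose active-component counts are $(2,2,2,1)$ in some order, since only there can one path be merge-then-split on a single factor while the opposite path consists of Type 2 edges. At the corner with a single active component, the unresolved crossing $x$ lies on that one loop and divides it into two wings. Each of the two resolved crossings, when switched, must cut this loop into two (since the adjacent corners have two components), so each must have both endpoints on a single wing; they cannot lie on different wings, since switching both would then produce three components, contradicting $\max a_i=2$. Hence both resolved crossings lie on the same wing, the other wing carries no crossing endpoints, and after discarding the empty wing and the crossing $x$ one sees that switching a crossing confined to a single embedded arc must change the number of components --- contradicting the presence of a Type 2 edge. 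This is where planar topology actually enters; note also that the word ``exactly'' in your description is slightly off, since there are further mixed configurations (e.g.\ all counts equal, or a corner killed by $\omega$) in which both compositions are simply zero.
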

\begin{proof} Consider a square in $C$:
\begin{diagram}
C_2 & \rTo & C_4\\
\uTo & & \uTo \\
C_1 & \rTo & C_3\\ 	
\end{diagram}
There are two crossings that are being resolved in this square (in addition to the crossing left unresolved). Let us consider only the \textit{active components}, that is the components in the pseudo-diagram resolutions in question that involve at least one of the crossings. Let $a_i$ be the number of active components in the pseudodiagram resolution corresponding to $C_i$.

Note that the minimal $a_i$ is at most two, because in the corner of the square with the minimal $a_i$, each crossing can only involve one component.

If the total number of endpoints of $\omega$ on these active components is odd, then all the $C_i$ are 0, so the diagram commutes. We assume that the total number of endpoints on the active components is even.

Moreover, if the unresolved crossing is not in one of the active components, then the diagram commutes because it looks the same as the marked Khovanov for a fully resolved link, which we showed commutes above. Thus, we may assume that the unresolved crossing is on an active component.

We do some casework:
\begin{enumerate}
\item If $\min(a_i)=2$. Consider the corner with minimal $a_i$. It has two components, both of which are active. There cannot be a crossing that goes between components, or resolving it the other way would lead to lower $a_i$. Thus, there must be one crossing on each of the two components. In this case it is clear that the diagram commutes, because the two crossings act independently of each other and are then tensored together.

For all other cases, $\min(a_i)=1$. 

\item If $\min(a_i)=1$ and $\max(a_i)=3$. In this case all the maps change number of components, so they are $m$, $\Delta$, or $0$, where they are only $0$ if either the source or the target is $0$. Then all maps are analogous to the case where we did not drop a crossing.

In particular, at the corner with $a_i = 1$, the two resolved crossings must each be restricted to one wing of the component, which means that if we resolve the remaining crossing in a way that doesn't change the number of components at the vertex with $a_i=1$, we do not affect the other groups and morphisms in the diagram. Commutativity now follows from commutativity for the case with all crossings resolved.

\item If $\min(a_i) = 1$ and $\max(a_i)=2$.

If adjacent vertices on the cube all have different $a_i$, then we get commutativity for the same reason that commutativity works in the case where we do not drop a crossing, since all the maps are analogous, as in the previous case.

So we may assume that there is either a pair of adjacent vertices with $a_i=1$, or there is a pair with $a_i=2$.

If there are simultaneously a pair with $a_i=1$ and a pair with $a_i=2$, then it is clear that however you traverse the square, you get zero, so it commutes. 

In there aren't both pairs simultaneously, then the $a_i$ are either $(1,1,1,2)$ in some order, or $(2,2,2,1)$ in some order. Since $m \circ \Delta =0$, the case $(1,1,1,2)$ commutes.

The case $(2,2,2,1)$ is not actually possible: consider the corner with one component. This is a single loop with a crossing on it, which divides it into two wings. There are two other crossings on it, such that if you resolve either of the crossings in the other way, you get two loops. This means each of these two crossings must be restricted to one wing, ie, it must go from one wing to itself. If the two crossings are on different wings, then switching the resolutions for both would give you $3$ components.

Thus we have that both crossings are restricted to the same wing. Then, one wing does not have any crossing endpoints on it, which means that we can think of the diagram ignoring the crossing and the empty wing; thus it is not possible for another crossing not the change the number of components, a contradiction. 

\item If $\min(a_i)=1$ and $\max(a_i)=1$. In this case all edge maps are 0, so the square commutes.
\end{enumerate}

This finishes the cases and we have shown commutativity.
\end{proof}

Let $C$ be the cube with one of the crossings not dropped, and $A$ and $B$ be the corresponding cubes when we resolve that crossing in the $0$ and $1$ configurations. 

We will establish maps $A_i \to B_i \to C_i \to A_i$ for $i \in \{0,1\}^{N-1}$ that commute with the maps in the cube, such that $A_i \to B_i \to C_i \to A_i \to B_i$ is exact, and also this splits (I will define what that means in this context).

The maps within $A$, $B$, and $C$ are already defined. As shown in Figure \ref{figure1}, define the maps $B \to C$, $A \to B$, $C\to A$, thus: whenever there are maps between terms with a different number of components, take $m$ or $\Delta$, unless either the source or the target is $0$ (which happens when one of the components of the corresponding unlink has an odd number of endpoints of $\omega$), in which case the map is $0$.

If the source and target unlinks have the same number of components, let the map be $0$ unless an unresolved crossing divides a component into two parts with an odd number of marked points on each side, in exactly one of the source and target, in which case let the map be $\Id$.

\begin{figure}[ht!]
\centering
\includegraphics[width=150mm]{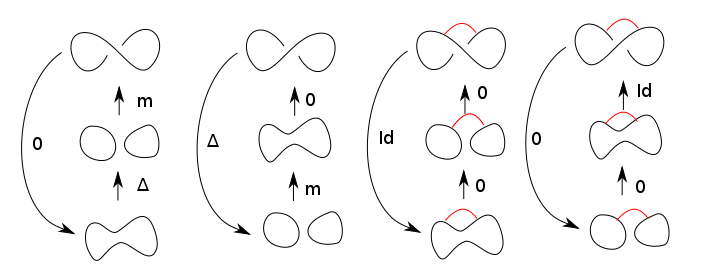}
\caption{}
\label{figure1}
\end{figure}

\begin{lemma} The cube commutes and for $i \in \{0,1\}^N$, the sequences $A_i \to B_i \to C_i \to A_i \to B_i$ are exact.
\label{ABC_exact}
\end{lemma}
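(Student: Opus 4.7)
The plan is to verify both claims by a direct case analysis at each vertex $i$ of the cube and each edge $(i,j)$, indexed by (a) the local configuration of the crossing $x$ in the relevant pseudo-diagram, and (b) the parities of marks on the arcs and components that $x$ separates. For the commutativity of the cube, I check each square
\begin{diagram}
A_i & \rTo & A_j \\
\dTo & & \dTo \\
B_i & \rTo & B_j
\end{diagram}
(in which the horizontal arrows are the cube differentials for the crossing $y$ resolved differently at $i$ and $j$, and the vertical arrows are the $A \to B$ map), together with its analogues for $B \to C$ and $C \to A$. If $y$ is disjoint from the component(s) incident to $x$ in the pseudo-diagram, the horizontal and vertical maps act on disjoint tensor factors and the square commutes. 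Otherwise I split further on whether $y$ is a self-crossing or joins two components, and on the parities of marks on the arcs bounded by $x$ and $y$. The nontrivial subcases reduce to standard Frobenius-algebra identities over $\zz/2$ (associativity of $m$, coassociativity of $\Delta$, and $m \circ \Delta = 0$), together with the observation that the $\Id$ maps inserted by the odd-odd rule in the ``same components'' case commute with $m$ and $\Delta$ on the inactive factors.

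For exactness of $A_i \to B_i \to C_i \to A_i \to B_i$ at its middle three terms, I split first on whether the two strands at $x$ lie on one component or two in the pseudo-diagram for $C_i$, and then on the parities $N_1, N_2$ of marks on the two arcs (in the self-crossing case) or on the two components (in the two-component case). Whenever $N_1 + N_2$ is odd, every chain group vanishes and the sequence is trivially exact. When both $N_1$ and $N_2$ are even, no term is killed, and the sequence takes, up to cyclic rotation, the form
\[A^{\otimes 2} \xrightarrow{m} A \xrightarrow{0} A \xrightarrow{\Delta} A^{\otimes 2} \xrightarrow{m} A,\]
so that exactness at the three middle positions follows from $\ker m = \im \Delta = \Span(1 \otimes x + x \otimes 1,\, x \otimes x)$ together with $\im m = A$. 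When both $N_1$ and $N_2$ are odd, exactly one of the three chain groups is killed while the other two are each $A$ (with even total marks), and the ``same components'' rule installs an $\Id$ between the surviving pair, producing a sequence of the form $0 \to A \xrightarrow{\Id} A \to 0 \to A$, which is exact.

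The main obstacle is bookkeeping: for each local configuration of $x$ crossed with each parity subcase, one must correctly read off from Figure \ref{figure1} which of $m$, $\Delta$, $\Id$, or $0$ each of the three vertical arrows takes, and must verify each commutativity square on the nose. Once the maps are identified in every subcase, the checks of exactness and of commutativity are immediate from the Frobenius identities recorded above.
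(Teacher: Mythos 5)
Your overall strategy --- direct case analysis plus the Frobenius identities over $\zz/2$ --- is the same as the paper's, and your treatment of exactness is correct and essentially complete: the sequences $A^{\otimes 2}\xrightarrow{m}A\xrightarrow{0}A\xrightarrow{\Delta}A^{\otimes2}\xrightarrow{m}A$ (up to cyclic rotation), the all-zero sequence, and $0\to A\xrightarrow{\Id}A\to 0\to A$ are exactly what occur on the active factor in the four local types, and tensoring with the inactive factors preserves exactness. (The paper dismisses this half with ``the exactness is easy to see,'' so here you have supplied the missing detail; the paper also saves work on the $A\to B$ squares by observing that they are faces of the already-verified cube $C(P_L,\partial\omega)$.)

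The gap is in the commutativity of the $B\to C$ and $C\to A$ squares in the hardest case. Your stated reduction --- Frobenius identities plus ``the $\Id$ maps \ldots commute with $m$ and $\Delta$ on the inactive factors'' --- only disposes of squares in which $y$ acts on a tensor factor disjoint from the one carrying $x$. The delicate squares are precisely those where $y$ merges or splits the very component carrying the unresolved crossing $x$. In that situation, when the pseudo-diagram resolutions at $C_1$ and $C_2$ have the same number of components, the map $C_1\to C_2$ vanishes by the Type 2 rule, so one must show that the composition around the other side ($B_1\to B_2\to C_2$, respectively $C_1\to A_1\to A_2$) also vanishes. If the odd--odd rule makes one of the vertical maps the identity on the \emph{active} factor while the adjacent horizontal map is $m$ or $\Delta$ on that same factor, the square does not commute factor-by-factor; one needs either a killed vertex forcing a horizontal map to be zero, or an $m\circ\Delta$ cancellation. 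The paper pins this down by counting active components ($\min a_i$ and $\max a_i$) and, in the $\min=1$, $\max=2$ case, by invoking the alternating hypothesis to exclude the configuration in which the two-crossing pseudo-diagram is a two-component unlink with two crossings between the components. Your proposal never engages with this configuration and never uses the standing assumption that $P_L$ is alternating; you should either carry out this subcase explicitly (identifying all vertical maps as $0$, $\Id$, $m$, or $\Delta$ as a function of the mark parities on the wings of $x$) or explain why the excluded configuration causes no harm.
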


\begin{proof} The exactness is easy to see. As for the commutation: we wish to show that squares containing the vertical maps commute. For squares that only involve $A$s and $B$s, we have already shown this above, when we checked that out modified Khovanov homology forms a complex. 

This means it remains to show that the complex:
\begin{diagram}
A_1 & \rTo & A_2\\
\uTo & & \uTo \\
C_1 & \rTo & C_2\\
\uTo & & \uTo \\
B_1 & \rTo & B_2\\
\end{diagram}
commutes.

As before, let us ignore components that are not touched by either of the two crossings in question, and only look at ones that are, ie, the active ones.

Note that two crossings are involved in this picture; the one corresponding to the vertical edges is the one we will be leaving unresolved in row $C$. Call this $x$. The two columns correspond to the resolutions of the other. Call this $y$.

Let $a_1,a_2, a_3, a_4$ denote the number of components in the pseudo-diagram resolutions corresponding to $A_1$, $A_2$, $B_1$, and $B_2$, respectively. Note that $\min(a_i) \leq 2$. Again, we divide into cases:

\begin{enumerate}
\item If $\min(a_i) = 2$. In this case at the corner (of the AB square) with the minimal number of components, neither crossing $x$ nor $y$ goes between components, and since both components have to be involved, that means $x$ is on one component and $y$ on the other. The maps then act independently on corresponding tensor factors, so the squares commute.

\begin{figure}[ht!]
\centering
\includegraphics[width=150mm]{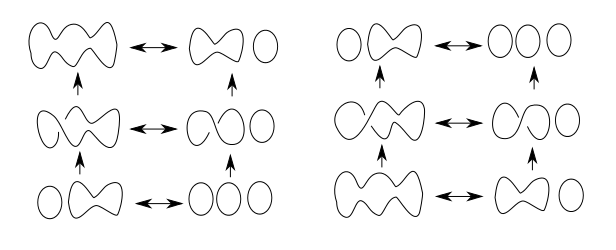}
\caption{}
\label{figure1c}
\end{figure}

\item If $\min(a_i)=1$ and $\max(a_i) = 3$. Then the resolution looks like one of the resolutions in Figure \ref{figure1c} (possibly with $\omega$). In the corner with three components, one of the components does not involve $x$. Therefore it persists in the entire column. If that component has an odd number of endpoints of $\omega$, then any composed map across any square has either vanishing source or vanishing target, so the squares commute.

Otherwise, we have that if the vertical maps on the left are $f$ and $g$, as in the diagram, then the ones on the right are $f \otimes \Id$ and $g \otimes \Id$. Then each square in question is:
\begin{diagram}
\cdot & \pile{\rTo^\Delta \\ \lTo_m} & \cdot\\
\uTo^f & & \uTo^{f \otimes \Id}\\
\cdot & \pile{\rTo^\Delta \\ \lTo_m} & \cdot\\
\end{diagram}
Here, $f$ is one of: $0,\Id, m,\Delta$. If $f = 0$ or $f = \Id$, the diagram clearly commutes. For $f =m$ or $f = \Delta$, the picture is the same as a classical Khovanov diagram where instead of the loop with a crossing as in row $C$, it is just a loop, so the diagram commutes.

\item If $\min(a_i) = 1$ and $\max(a_i) = 2$. In this case if we look at the square formed by $A_1,A_2,B_1,B_2$, ie the one corresponding to the classical Khovanov complex for the two crossings, it must have two vertices with $a_i=1$, opposite each other; the other two vertices have $a_i=2$.

More specifically, $A_1$ and $B_2$ must have one component, and $A_2$ and $B_1$ must have two components; if this were reversed, then the square would represent a link projection for an unlink with two crossings between the components, which cannot be a projection of an alternating link. Thus, the full picture (not including $\omega$) looks as in Figure \ref{figure1d}.

\begin{figure}[ht!]
\centering
\includegraphics[width=150mm]{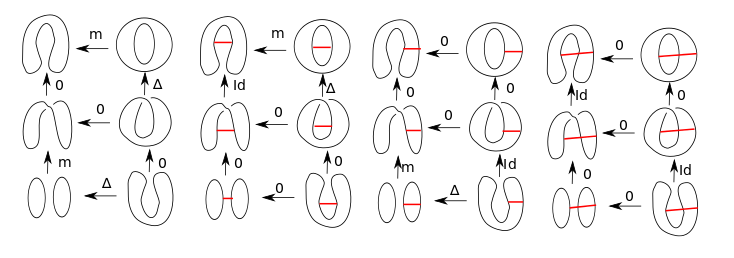}
\caption{In this figure, the top row is the row corresponding to the $A_i$, the middle one to the $C_i$, and the bottom one to the $B_i$. The four cases are based on whether either or both of $A_1$ and $B_2$ are killed by $\omega$}
\label{figure1d}
\end{figure}

Note that in this case $C_1$ and $C_2$ must both have $1$ component, so the map $C_1 \to C_2$ is 0. Thus, to show that the squares commute, it suffices to show that
\[B_1 \to B_2 \to C_2\]
and
\[C_1 \to A_1 \to A_2\]
are zero. These compositions are both either $0$ or $m \circ \Delta$, which is also $0$.
\end{enumerate}

This concludes the proof that the complex composed of $A,B,C$ as exhibited commutes.
\end{proof}

We shall now explain the sense in which the vertical exact sequences ``split'', as we alluded to earlier.

In figure \ref{figure2}, we show the four cases for what the vertical maps could be, only taking into account components with the crossing $x$.

\begin{figure}[ht!]
\centering
\includegraphics[width=150mm]{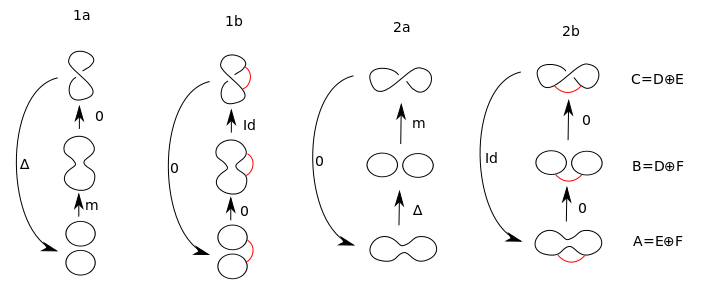}
\caption{}
\label{figure2}
\end{figure}

Let us exhibit $D_i,E_i,F_i$ such that $A_i = E_i \oplus F_i$, $B_i = D_i \oplus F_i$ and $C_i = D_i \oplus E_i$, such that map $A_i \to B_i$ is $0$ on $E_i$ and $\Id:F_i \to F_i$, and similarly for $B_i \to C_i$ and $C_i \to E_i$.

This, again, happens by casework. For the four cases 1a, 1b, 2a, 2b, in figure 2, 

Then $D_i,E_i,F_i$ are given by
\begin{itemize}
\item Case 1a: 
\[A_i = E_i \oplus F_i \text{, with } E_i = (1 \otimes x+x \otimes 1, x \otimes x) \text{, }F_i = (1 \otimes 1, 1 \otimes x)\]
\[B_i = D_i \oplus F_i \text{, with } D_i = 0 \text{, }F_i = (1,x)\]
\[C_i = E_i \oplus D_i \text{, with } D_i = 0 \text{, }E_i = (1,x)\]

\item Case 1b: 
\[A_i = E_i \oplus F_i \text{, with } E_i = 0 \text{, }F_i = 0\]
\[B_i = D_i \oplus F_i \text{, with } D_i = (1,x) \text{, }F_i = 0\]
\[C_i = E_i \oplus D_i \text{, with } D_i = (1,x)  \text{, }E_i = 0\]

\item Case 2a:
\[A_i = E_i \oplus F_i \text{, with } E_i = 0 \text{, }F_i = (1,x)\]
\[B_i = D_i \oplus F_i \text{, with } E_i = (1 \otimes 1, 1 \otimes x)\text{, } F_i = (1 \otimes x+x \otimes 1, x \otimes x) \]
\[C_i = E_i \oplus D_i \text{, with } D_i = (1,x) \text{, }E_i = 0\]

\item Case 2b:
\[A_i = E_i \oplus F_i \text{, with } E_i = (1,x) \text{, }F_i = 0\]
\[B_i = D_i \oplus F_i \text{, with } E_i = 0 \text{, } 0 \]
\[C_i = E_i \oplus D_i \text{, with } D_i = 0 \text{, }E_i = (1,x)\]

\end{itemize}

Note that in the above definition, we have to make some choice with the term $F_i \subset A_i$ in case 1a and $D_i \subset B_i$ in case 2a. Our definition depends on the choice of ordering of the two components. We can do this consistently by choosing some one of the four corners of the crossing we want unresolved and always taking the loop containing that corner to be the first. We call this the \textbf{first loop}.

In defining the $D,E,F$, we are considering the component of the pseudodiagram resolution containing the unresolved crossing. For the rest we tensor up with $A^{\otimes N-1}$ componentwise.

Now we are looking at sequences of cubes $E \oplus F \to D \oplus F \to D \oplus E \to E \oplus F$. We can split $D$ into $\oplus D_i$ where $D_i$ is the direct sum of all parts of $D$ where the sum of the indices is $i$; similarly for $E$ and $F$.
\begin{diagram}
\cdots & \rTo & E_i \oplus F_i & \rTo & E_{i+1} \oplus F_{i+1} & \rTo & \cdots \\
       &      & \uTo           &      & \uTo                   &      &        \\
\cdots & \rTo & D_i \oplus E_i & \rTo & D_{i+1} \oplus E_{i+1} & \rTo & \cdots \\
       &      & \uTo           &      & \uTo                   &      &        \\
\cdots & \rTo & D_i \oplus F_i & \rTo & D_{i+1} \oplus F_{i+1} & \rTo & \cdots \\
       &      & \uTo           &      & \uTo                   &      &        \\
\cdots & \rTo & E_i \oplus F_i & \rTo & E_{i+1} \oplus F_{i+1} & \rTo & \cdots \\
\end{diagram} 
where the horizontal maps are the differentials in the cubes $A,B,C$ and the vertical maps are all identity on one component and $0$ on the other.

Let us consider what the horizontal maps are, ie how the differentials within cubes $A$, $B$, and $C$ look when written in terms of $D,E,F$. Let us consider $d_A:E_i \oplus F_i \to E_{i+1}\oplus F_{i+1}$ and write it as $\mattwo{d_E}{\gamma}{d_{EF}}{d_F}$. Note that $d_{EF} = 0$, because if it were nonzero, consider

\begin{diagram}
D_i \oplus F_i & \rTo^{\mattwo{d_D}{d_{FD}}{d_{DF}}{d_F}} & D_{i+1} \oplus F_{i+1} = B_{i+1}\\ 
\uTo^{0,\Id} & & \uTo^{0,\Id} \\
E_i \oplus F_i & \rTo^{\mattwo{d_E}{\gamma}{d_{EF}}{d_F}} & E_{i+1} \oplus F_{i+1} = A_{i+1}\\
\end{diagram}

Then if $d_{EF} \neq 0$, then some $x \in E_i$ maps nontrivially to $F_{i+1}$, which then maps nontrivially up to $F_{i+1} \subset B_{i+1}$, but this is impossible because the diagram commutes, and $x$ maps to $0 \in D_i \oplus F_i$.

Similarly, $d_{DE}$ and $d_{FD}=0$. Let us remark that for this argument, it is not surprising that we do not need to take into account the choice we made for $F \subset A$ and $D \subset B$, whose definitions required a choice of a corner of unresolved crossing, because the statement that $d_{EF}$, $d_{DE}$, and $d_{FD}$ vanish is saying that the part that the vertical differential kills must map horizontally to the part of the target that the vertical differential kills.

We can now write
\[d_A = \mattwo{d_E}{\gamma}{0}{d_F},\]
\[d_B = \mattwo{d_D}{0}{\beta}{d_F},\]
and 
\[d_C = \mattwo{d_D}{\alpha}{0}{d_E}\]

\begin{lemma} In the notation above, $\alpha = 0$, where $\alpha:E \to D$.
\label{alpha0}
\end{lemma}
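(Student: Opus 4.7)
The plan is to show $\alpha$ vanishes edge-by-edge on the cube $C$ by proving that on each edge from vertex $i$ to $i+1$, the case label of Figure \ref{figure2} at $C_i$ coincides with that at $C_{i+1}$. Reading off from the four-case definitions, $E_i \neq 0$ exactly in cases 1a and 2b while $D_i \neq 0$ exactly in cases 1b and 2a, so if the case label is preserved across an edge then at least one of $E_i, D_{i+1}$ vanishes and $\alpha: E_i \to D_{i+1}$ is automatically zero.

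The case label at vertex $i$ is controlled by two pieces of local data about the loop $L$ of the pseudo-diagram $C_i$ containing the unresolved crossing $x$: the parities of the $\omega$-endpoint counts on the two wings of $x$ on $L$, and which of the two smoothings of $x$ splits $L$ into two loops. Each edge of $C$ toggles the resolution of some crossing $y \neq x$, and I would enumerate the ways this can affect $L$. If $y$ is disjoint from $L$ both pieces of data are automatically preserved. If $y$ merges $L$ with another loop $L'$ (or the edge splits one off), the ``not killed'' hypothesis at the vertex that sees $L'$ as a separate component forces $|L'|$ to be even, so attaching $L'$ at a single point of one wing of $x$ preserves both wing-parities; the local diagram at $x$ is untouched, so the splitting data is preserved as well. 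If $y$ is a self-crossing of $L$ whose two visits along $L$ are nested with those of $x$ (cyclic order $(x,x,y,y)$), then $y$ lies entirely on one wing and splitting $y$ pinches off a sub-loop $L_2$ whose endpoint count is necessarily even, so wing-parities and the local topology at $x$ are once more preserved.

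The remaining configuration is the interleaved one, where the four visits to $x, y$ along $L$ appear in cyclic order $(x,y,x,y)$. Splitting $y$ in that configuration would separate the two passes of $x$ onto distinct loops of $C_{i+1}$, yielding a pseudo-diagram in which $x$ becomes a crossing between two different components --- a configuration outside the four-case classification of Figure \ref{figure2}. The main obstacle in completing the proof is this interleaved configuration, which must either be excluded using the alternating hypothesis on $P_L$ (by an argument analogous to the exclusion of the $(2,2,2,1)$ case in Lemma \ref{dropped_commutes}) or absorbed by extending the $D, E, F$ decomposition to a fifth configuration and checking directly that $d_C|_{E_i}$ still lands in the $E_{i+1}$-summand. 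Once this step is dispatched, case-preservation in every remaining scenario gives $\alpha = 0$.
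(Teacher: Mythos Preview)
Your outline is incomplete, as you yourself flag, and the interleaved configuration you isolate is indeed where the work lies --- but the paper does not dispatch it by proving case preservation. The paper instead observes from the four cases of Figure~\ref{figure2} that at each vertex of $C$ exactly one of $D_i,E_i$ is nonzero, and then enumerates the four conceivable transitions from a vertex with $E_i\neq 0$ (cases $1a,2b$) to one with $D_{i+1}\neq 0$ (cases $1b,2a$). For the transitions $1a\to 2a$ and $2b\to 1b$, a parity count on the $A$ row forces the pseudo-diagrams at $C_i$ and $C_{i+1}$ to have the same number of components, so the edge in question is of Type~2 in Definition~\ref{drop cpx}, where $d_C$ is \emph{zero by definition}; hence $\alpha$ vanishes on such edges automatically, with no need to show the case label is preserved. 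For the transitions $1a\to 1b$ and $2b\to 2a$, an analysis of the associated $A,B$ square shows the underlying two-crossing picture would have to be non-alternating, so these are excluded by hypothesis.

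The ingredient you are missing is therefore not an exclusion argument for the interleaved picture, nor a fifth $D,E,F$ configuration, but the much simpler observation that $d_C$ already vanishes on component-preserving edges by construction --- an input from Definition~\ref{drop cpx} that your outline never invokes. Once that is fed in, two of the four bad transitions evaporate, and the alternating hypothesis handles the other two. Your suggested alternative of enlarging the $D,E,F$ decomposition to accommodate a vertex where $x$ sits between two loops is a question about the exhaustiveness of Figure~\ref{figure2}, separate from the mechanism that actually proves this lemma.
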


\begin{proof} Notice that in the cube $C$, in each vertex of the cube we have $D_i=0$ or $E_i=0$ (notice this by checking all the cases; see figure \ref{figure2}). The cases with nonzero $E$ are 1a and 2b, and the ones with non-zero $D$ are 1b and 2a. Hence, a part of the differential on $C$ with $\alpha \neq 0$ must be
\[(1a \text{ or } 2b) \to (1b \text{ or } 2a)\]

We divide into cases:
\begin{itemize}
\item $1a\to 1b$: Looking at the square on the $A$ and $B$ rows,
\begin{diagram}
B_1 & \rTo & B_2\\
\uTo & & \uTo \\
A_1 & \rTo & A_2\\
\end{diagram}
the vertical maps come from merges, and the horizontal map on row $A$ must come from a split, because it goes from a resolution that doesn't split $\omega$ to one that splits $\omega$. Counting numbers of components, we see that the horizontal map on level $B$ must also be a split. Thus, if $B_1$ has $i$ components, then $A_1,B_2$ have $i+1$ and $A_2$ has $i+2$.

Counting only active components (components that involve at least one of the two crossings), $i$ is $1$ or $2$. In case $i=1$, the original link cannot be alternating. If $i=2$, we are looking at two crossings on two separate components in the $B_1$ corner, at least one of which has $\omega$ split across the crossing, which means either $A_1=0$, in which case the first column is not in case $1a$, or $B_2=0$, in which case the second is not in case $1b$, a contradiction.

\item $2b \to 2a$, This proof works exactly the same way; the vertical maps come from splits, and for the horizontal map $B_1 \to B_2$ to go from splitting $\omega$ to not splitting $\omega$, it must be a merge, and then the same argument applies.

\item $1a \to 2a$ and $2b \to 1b$. Note that the number of components in the resolution for $A_1$ must be of opposite parity to the number of components in the resolution for $A_2$, but this means that the number of components in the resolution for $C_1$ must be the same as that for $C_2$. Thus for this map, the differential $C_1 \to C_2$ is simply zero, because on level $C$ we defined the differential to be zero whenever it goes between two resolutions of the same number of components.

Thus $\alpha =0$. 
\end{itemize}
\end{proof}

From the above analysis, we see that the cone of the differential $A \to B$ is $ \oplus D_{i-1}  \oplus F_{i-1} \oplus F_i \oplus E_i$, with the differential 
\[\cdots \to D_{i-1}  \oplus F_{i-1} \oplus E_i \oplus F_i \to D_{i}  \oplus F_{i} \oplus E_{i+1} \oplus F_{i+1} \to \cdots\]
given by
\[{\left[\begin{array}{cccc} d_D  & 0 & 0 & 0\\ \beta & d_F & 0 & \Id \\ 0 & 0 & d_E & \gamma \\ 0 & 0 & 0 & d_F\\ \end{array} \right]}\]

Observe, moreover, that $D$, $E$, and $F$ are chain complexes and $\beta$ and $\gamma$ are chain maps: This is because 
\[0=d_B^2 = \mattwo{d_D}{0}{\beta}{d_F}^2 = \mattwo{d_D^2}{0}{\beta d_D+d_F \beta}{d_F^2}\]
so $d_D^2=0$, $d_F^2=0$, and $\beta$ is a chain map. Using $d_A$, we can show the same for $d_E$ and $\gamma$.

Let us consider the composition $\gamma \circ \beta:D_i \to E_{i+2}$.

\begin{lemma} The cone of the map $A \to B$, that is the complex $ \cdots \to \oplus D_{i-1}  \oplus F_{i-1} \oplus F_i \oplus E_i \to \cdots $
is quasi isomorphic to the complex
\[\cdots \to D_{i-1}   \oplus E_i \to D_{i}  \oplus E_{i+1} \to \cdots \]
with differential $d=\mattwo{d_D}{0}{\gamma \circ \beta}{d_E}$, which we can think of as a complex based on the psuedodiagram that gave rise to $C$, but with an extra differential term $\gamma \beta$ which bumps the degree up by $2$ (ie, the differential now has components in (un-adjusted for self-intersection of cobordisms) cohomological dimension both $1$ and $2$). 
\end{lemma}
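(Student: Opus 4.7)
The plan is to establish the quasi-isomorphism as an explicit chain homotopy equivalence, essentially by Gaussian elimination on the identity block sitting in position $(2,4)$ of the cone differential. Since we work in characteristic $2$, signs do not appear, and the argument reduces to writing down two chain maps and a single null-homotopy.

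Writing the cone in degree $i$ as $D_{i-1} \oplus F_{i-1} \oplus E_i \oplus F_i$ as in the displayed matrix, I define
\[
\phi(d,e) = (d,\, 0,\, e,\, \beta(d)) \colon D_{i-1} \oplus E_i \longrightarrow D_{i-1} \oplus F_{i-1} \oplus E_i \oplus F_i
\]
and
\[
\psi(d,f_1,e,f_2) = (d,\, e + \gamma(f_1)) \colon D_{i-1} \oplus F_{i-1} \oplus E_i \oplus F_i \longrightarrow D_{i-1} \oplus E_i.
\]
Verifying that $\phi$ and $\psi$ are chain maps is a direct matrix calculation using only the two relations $d_F \beta = \beta d_D$ and $d_E \gamma = \gamma d_F$, i.e., the fact that $\beta$ and $\gamma$ are themselves chain maps (observed just above). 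From the definitions one sees immediately that $\psi \circ \phi = \Id$.

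For the reverse composition, a direct calculation gives $(\Id + \phi\psi)(d,f_1,e,f_2) = (0,\, f_1,\, \gamma(f_1),\, \beta(d) + f_2)$. I then exhibit the homotopy
\[
H(d,f_1,e,f_2) = (0,\, 0,\, 0,\, f_1),
\]
which slides the $F_{i-1}$-summand from slot $2$ to slot $4$ in the next-lower cohomological degree, and check by expansion of $dH + Hd$ using the cone differential that the result agrees with $\Id + \phi\psi$; the two copies of $d_F(f_1)$ that appear in slot $4$ cancel in characteristic $2$, and the other entries match termwise.

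No real obstacle is anticipated, since the proof is mechanical once these maps are in hand. Conceptually, the composite $\gamma \circ \beta$ in the reduced differential arises because $\beta(d)$, produced by the $F$-component of $d_B$, is forced through the identity $F_i \to F_i$ in the cone and then through the $\gamma$-component of $d_A$ before landing in $E_{i+1}$; this is precisely the effect of a single Gaussian elimination step on the identity block. The only thing that requires care is keeping the degree-shift convention for the cone consistent throughout the bookkeeping.
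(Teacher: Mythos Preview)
Your proof is correct and is essentially identical to the paper's own argument: your maps $\phi$, $\psi$, and $H$ coincide with the paper's $i$, $\pi$, and $h$ (up to renaming of variables), and the verifications are the same computations. The only addition is your framing of the step as Gaussian elimination on the identity block, which is a helpful way to see why $\gamma\circ\beta$ appears in the reduced differential.
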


\begin{proof} Consider the maps
\[\pi: D_{i-1} \oplus F_{i-1} \oplus E_i \oplus F_i  \leftrightarrow D_{i-1}   \oplus E_i:i \]
given by 
\[i(x,z) = (x,0,z,\beta x) \text{, and }\pi(x,y,z,w) = (x, z+\gamma y).\]

Let us check that these are chain maps:
\[d(i(x,z)) = {\left[\begin{array}{cccc} d_D  & 0 & 0 & 0\\ \beta & d_F & 0 & \Id \\ 0 & 0 & d_E & \gamma \\ 0 & 0 & 0 & d_F\\ \end{array} \right]} 
{\left[\begin{array}{c} x \\ 0 \\ z \\ \beta x\\ \end{array} \right]} = {\left[\begin{array}{c} d_Dx \\ \beta x+ \beta x \\ d_E z + \gamma \beta x \\ d_F \beta x\\ \end{array} \right]} \oset{\heartsuit}= {\left[\begin{array}{c} d_Dx \\0 \\ d_E z + \gamma \beta x \\ \beta d_D x\\ \end{array} \right]}\]
\[ =i \vecttwo{d_D x}{\gamma \beta x+d_Ez}= i(d(x,z)) \]
where the equality $\heartsuit$ comes from the facts that $\beta$ is a chain map and that we are working over characteristic 2.

For the other direction, we have
\[d(\pi(x,y,z,w)) = \mattwo{d_D}{0}{\gamma \beta}{d_E} \vecttwo{x}{z+\gamma y} = \vecttwo{d_Dx}{\gamma \beta x+d_E z+d_E \gamma y}\]
\[=\vecttwo{d_Dx}{d_E z+ \gamma w + \gamma \beta x + \gamma d_F y + \gamma w} =  \pi {\left[\begin{array}{c} d_Dx \\ \beta x+ d_F y + w \\ d_Ez+\gamma w \\ d_F w\\ \end{array} \right]}  =  \pi(d(x,y,z,w)) \]

Having checked that the maps are chain maps, we proceed to show that their compositions are chain homotopic to the identity.

For $\pi \circ i$, note that $\pi \circ i(x,z) = \pi(x, 0, z, \beta x) = (x, z+ \gamma 0)  = (x,z)$, so $\pi$ is a true left inverse of $i$.

For $i \circ \pi$, we have 
\[i \circ \pi (x,y,z,w) = i(x, z+\gamma y)= (x, 0, z+\gamma y, \beta x).\]
To show that this is chain homotopic to the identity, it suffices to show that $(0,y, \gamma y, \beta x+w)$ is chain homotopic to $0$. 

Consider 
\[h ={\left[\begin{array}{cccc} 0  & 0 & 0 & 0\\ 0 & 0 & 0 & 0 \\ 0 & 0 & 0 & 0 \\ 0 & 1 & 0 & 0\\ \end{array} \right]}\]
Then we have
\[(hd+dh)(x,y,z,w) = \left(
                  \begin{array}{cccc}
                   0 & 0 & 0 & 0 \\
                   0 & I & 0 & 0 \\
                   0 & \gamma & 0 & 0 \\
                   \beta & 0 & 0 & 1 \\
                  \end{array}
                  \right)
\left(\begin{array}{c} x \\ y \\ z \\ w \\ \end{array}\right)
 =  \left(\begin{array}{c} 0 \\ y \\ \gamma y \\ \beta x + w \\ \end{array}\right)\]
As desired.

\end{proof}

Note that terms of the form $\gamma \circ \beta: C_i \to C_{i+2}$ will involve 2 crossings (other than the dropped one), so we can think of it as a diagonal map on a square in $C$.

We can now show that $\gamma \circ \beta =0$ for alternating links.

\begin{lemma} For alternating link projections, with our notation as above, $\gamma \circ \beta =0$. Thus, for alternating link projections, the complex $A \to B$ is quasi-isomorphic to the complex $C$ with differential $\mattwo{d_D}{0}{\gamma \circ \beta}{d_E}$, but as $\gamma \circ \beta=0$, this is $\mattwo{d_D}{0}{0}{d_E}$, which is the complex for one dropped crossing, by Lemma \ref{alpha0}.
\end{lemma}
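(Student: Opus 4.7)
The plan is a direct case analysis on which squares of the pseudo-diagram cube can contribute a nonzero diagonal to $\gamma \circ \beta$, combined with a local geometric argument ruling out each such contribution under the alternating hypothesis. The map $\gamma \circ \beta$ increases cube degree by $2$, so it decomposes as a sum, one term for each pair of crossings $(y, z)$ distinct from $x$, of a diagonal map on the square spanned by $y, z$. Each summand is the composition of the $D \to F$ component of the $y$-edge map in the $B$-cube with the $F \to E$ component of the $z$-edge map in the $A$-cube, running between the three vertices $i$, $i+e_j$, $i+e_j+e_k$.

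For a summand to be nonzero I need $D_i\neq 0$, $F_{i+e_j}\neq 0$, and $E_{i+e_j+e_k}\neq 0$, which by Figure~\ref{figure2} forces vertex $i$ into case $1b$ or $2a$, vertex $i+e_j$ into case $1a$ or $2a$, and vertex $i+e_j+e_k$ into case $1a$ or $2b$. Moreover any edge of the $A$- or $B$-cube changes the corresponding component count by exactly $\pm 1$, so any single-edge transition staying inside $\{1a,1b\}$ or inside $\{2a,2b\}$ is impossible, i.e.\ the geometric case label (1 vs.\ 2) must flip along every edge. A short enumeration then leaves exactly two admissible chains:
\[1b \to 2a \to 1a \qquad\text{and}\qquad 2a \to 1a \to 2b.\]
A direct computation with the explicit bases from Figure~\ref{figure2} shows that in either chain $\beta$ and $\gamma$ are nonzero (given by the natural restrictions of $\Delta$ or $m$), so the algebra alone does not force $\gamma\circ\beta=0$; the alternating hypothesis is essential.

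The second part of the argument is therefore to exclude each admissible chain by local inspection. For $1b \to 2a \to 1a$, the wings of the $x$-loop at $i$ carry odd $\omega$-counts while at $i+e_j$ and $i+e_j+e_k$ they carry even $\omega$-counts, and the crossings $y$ and $z$ must each have one strand on each wing of $x$, with $y$ merging in the $A$-row and splitting in the $B$-row and $z$ doing the opposite. I would draw out the finitely many local tangles of $x$, $y$, $z$ together with the $x$-loop wings compatible with this data and check that in each the required ``row-swapping'' behaviour forces a pair of strands to cross with matching over/under signs at two consecutive crossings, contradicting the alternating assumption; equivalently, in the Tait/checkerboard graph of the projection, the forced colouring around $x,y,z$ would make two adjacent regions the same colour, violating bipartiteness. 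The chain $2a \to 1a \to 2b$ is eliminated by the symmetric argument obtained by swapping the roles of the $0$- and $1$-resolutions throughout. The main obstacle is this last step: making the geometric non-alternating inspection rigorous for each admissible local type, which is where the alternating hypothesis finally bites.
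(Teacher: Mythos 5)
There is a genuine gap at the enumeration step. Your claim that ``the geometric case label (1 vs.\ 2) must flip along every edge'' because each edge changes the component count by $\pm 1$ does not follow: the type at a vertex is determined by whether the $A$-row count exceeds the $B$-row count by $+1$ or $-1$, and an edge changes each of these counts by $\pm 1$ \emph{independently}; when both change in the same direction the type is preserved. Consequently your enumeration misses two admissible chains. The correct constraints are only that $\beta\neq 0$ forces a transition $(1b\text{ or }2a)\to(1a\text{ or }2a)$ and $\gamma\neq 0$ forces $(1a\text{ or }2a)\to(1a\text{ or }2b)$; after separately ruling out the edges $1b\to 1a$ and $2a\to 2b$ (which is done by a merge/split count in the $AB$ square forcing a Hopf-link subdiagram, not by your flipping principle), one is left with \emph{four} chains: $1b\to 2a\to 1a$, $2a\to 1a\to 1a$, $2a\to 1a\to 2b$, and $2a\to 2a\to 1a$. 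Your list omits the second and fourth.

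The second gap is that your final geometric exclusion is only a program (``I would draw out the finitely many local tangles\dots''), and as stated it is tailored to the two chains you kept. The efficient observation, which also rescues the missed cases, is that every one of the four surviving chains contains a $2a\to 1a$ edge, and that single edge is already impossible in an alternating projection: in the corresponding $AB$ square the vertical maps are $\Delta$ on the left and $m$ on the right, so counting components forces the horizontal maps to be $\Delta$ on the $A$-row and $m$ on the $B$-row; the induced $C_1\to C_2$ switch then preserves the component count, so the edge crossing must join the two wings of the $x$-loop, and the two-crossing subdiagram on $x$ and that crossing is a projection of a two-component unlink with two crossings between its components (the Hopf link alternative is excluded because the $00$ resolution has one component) --- and no such projection is alternating. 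If you reorganize your case analysis around this one edge type rather than around full chains, both gaps close simultaneously.
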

\begin{proof} We will exhaustively go through all cases where $\beta \neq 0$ or $\gamma \neq 0$ to see what $\gamma \circ \beta$ could be. To understand $\beta$ and $\gamma$, we only need to look at two crossings, the one that is unresolved in $C$, which we call $x$, and the other one, which we call $y$.

Consider the types 1a, 1b, 2a, and 2b, as in Figure \ref{figure2}.

Note that $\beta$ can only be nonzero if we are going from a type where $D \neq 0$ to a type where $F \neq 0$. So this is only possible when we are going:
\[\beta \neq 0 \text{:  (1b or 2a) $\to$ (1a or 2a)}.\]
Similarly,
\[\gamma \neq 0 \text{:  (1a or 2a) $\to$ (1a or 2b)}.\]

We can eliminate some cases:
\begin{itemize}
\item Let's consider a map $1b \to 1a$. If we start with the 1b picture and switch crossing $y$, then on level $A$, we go from splitting $\omega$ to not splitting $\omega$. Thus the map $A_1 \to A_2$ is a merge. However, $A_1 \to B_1$ and $A_2 \to B_2$ are both merges, by the definition of types $1a$ and $1b$. So all four maps in the $AB$ square are merges.

This, however, is impossible, because the fact that we are going from $1b$ to $1a$ means that both the maps $A_1 \to A_2$ and $A_1 \to B_1$ must come from merges between the two components which have an odd number of endpoints of $\omega$ in $A_1$, but for two merges between the same two components, the two crossing diagram this is resolving has to be a Hopf link, which means the maps $A_2 \to B_2$ and $B_1 \to B_2$ must be splits.

\item Similarly, for maps $2a \to 2b$, by counting components, the horizontal arrows must either both arise from merges, or both arise from splits, but the condition that only the $B_2$ diagram separates $\omega$ contradicts this.
\end{itemize}

Thus for $\gamma \circ \beta \neq 0$, the only possibilities are:
\begin{itemize}
\item $1b \to 2a \to 1a$
\item $2a \to 1a \to 1a$
\item $2a \to 1a \to 2b$
\item $2a \to 2a \to 1a$
\end{itemize}

All four of these possibilities involve some map $2a \to 1a$. Again, let us consider the $AB$ square for this:
\begin{diagram}
B_1 & \rTo & B_2\\
\uTo & & \uTo \\
A_1 & \rTo & A_2\\
\end{diagram}
Since this picture is $2a$ to $1a$, the vertical maps are $\Delta$ on the left and $m$ on the right. Thus, counting components, we see that the horizontal maps can only be $\Delta$ on the bottom and $m$ on the top. The map $C_1 \to C_2$ therefore comes from a resolution switch that doesn't change the number of components. The crossing therefore has to go between the wings of a loop with $x$ on it. Thus, if we leave both $x$ and $y$ unresolved, we get either a Hopf link or a two component unlink with two crossings between components. The $00$ resolution of this picture has only one component, so of these two possibilities, it must be the unlink. Such an unlink projection, however, is not alternating, so we have reached a contradiction.
\end{proof}

\begin{cor} For alternating link projections $P_L$ with $\omega$, the rank of the Khovanov homology with $\omega$ we defined is invariant with respect to dragging endpoints of $\omega$ around a component.
\label{indep_slide_omega}
\end{cor}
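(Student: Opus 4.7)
The plan is to deduce the corollary directly from the machinery built up in this section. Any motion of an endpoint of $\omega$ along a component of $L$ decomposes into finitely many elementary slides, each of which pushes one endpoint past a single crossing of $P_L$; so it suffices to show that whenever $\partial\omega'$ is obtained from $\partial\omega$ by sliding one endpoint across a crossing $x$, $H(P_L,\partial\omega)$ and $H(P_L,\partial\omega')$ have the same dimension.

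Fix such a crossing $x$. The chain of results above --- the cone description of the cone of $A\to B$, together with Lemma \ref{alpha0} and the vanishing of $\gamma\circ\beta$ for alternating projections --- shows that $C(P_L,\partial\omega)$ is quasi-isomorphic to the dropped complex $C(P_L,\partial\omega,x)$ of Definition \ref{drop cpx}, and likewise $C(P_L,\partial\omega')$ is quasi-isomorphic to $C(P_L,\partial\omega',x)$. It therefore remains to verify that these two dropped complexes are in fact equal as chain complexes.

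Both the chain groups and the differentials in the dropped complex depend on the marking data only through the parity of $\omega$-endpoints on each component of each pseudo-diagram resolution: the parities decide which vertices are killed, which in turn decides which edge maps are $m$, $\Delta$, or $0$. Since $x$ is left unresolved in every such pseudo-diagram, the two arcs of $L$ adjacent to $x$ along a given strand are joined through the $4$-valent vertex at $x$ and therefore lie on the same connected component of the pseudo-diagram. An elementary slide across $x$ moves one endpoint between those two adjacent arcs, hence within a single pseudo-diagram component, so the parity function on components is unchanged. Consequently $C(P_L,\partial\omega,x)=C(P_L,\partial\omega',x)$ on the nose, and the corollary follows. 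The main obstacle, already dispatched by the preceding lemma, is the vanishing of $\gamma\circ\beta$ for alternating projections; once that is in hand, the slide invariance reduces to the structural observation about pseudo-diagram components just described, and applies uniformly whether $x$ is a self-crossing of one component of $L$ or joins two distinct components.
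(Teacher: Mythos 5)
Your proposal is correct and follows essentially the same route as the paper: reduce to a single slide past a crossing $x$, compare both marked complexes to the dropped complex $C(P_L,\partial\omega,x)$ via the quasi-isomorphism established by the vanishing of $\alpha$ and $\gamma\circ\beta$, and observe that since $x$ is unresolved the two arcs on either side of it lie on the same pseudo-diagram component, so the dropped complex is unchanged. The paper states this more tersely ("the one crossing dropped complex doesn't see where the endpoint is"), but the content is identical.
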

\begin{proof} The only problem case is when you drag an endpoint diagonally across a crossing, and in this case, we can compare both sides to the complex with that crossing dropped. The one crossing dropped complex doesn't see where the endpoint is.
\end{proof}

\begin{cor} For alternating link projections $P_L$ with $\omega$, where $L$ is not split, the rank of the marked link Khovanov homology of $P_L$ with $\omega$ does not depend on $\omega$.
\label{indep_omega}
\end{cor}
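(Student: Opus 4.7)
The plan builds on Corollary~\ref{indep_slide_omega}. Since sliding an endpoint of $\omega$ along a component is already known to preserve the rank, and since a pair of endpoints placed close together on a single arc of $P_L$ can be freely added to or removed from $\partial\omega$ without changing the complex at all (both endpoints lie on the same component in every resolution of $P_L$, so they contribute evenly to every parity count, leaving the set of ``killed'' vertices unchanged), the rank of $H(P_L,\partial\omega)$ depends on $\omega$ only through the tuple of parities $(\epsilon_1,\ldots,\epsilon_k) \in (\zz/2)^k$, where $L = L_1 \cup \cdots \cup L_k$ and $\epsilon_i = |\partial\omega \cap L_i| \pmod{2}$. Since $|\partial\omega|$ is even, these parities satisfy $\sum_i \epsilon_i \equiv 0 \pmod{2}$.

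The main step is to show that any two such tuples yield complexes of equal rank. For this I would establish a ``parity flip'' move: given any crossing $c$ of $P_L$ between two distinct components $L_i$ and $L_j$, the rank for the parity tuple $(\ldots,\epsilon_i,\ldots,\epsilon_j,\ldots)$ agrees with the rank for $(\ldots,\epsilon_i+1,\ldots,\epsilon_j+1,\ldots)$. To prove this, I would choose representative endpoint configurations differing only in the placement of a single endpoint on the $L_i$-strand versus on the $L_j$-strand near $c$, and compare both complexes to the dropped-crossing complex $C(P_L,\partial\omega,c)$ of Definition~\ref{drop cpx}. The essential point, in the same spirit as the analysis in the proof of Proposition~\ref{propn well def} (Lemmas~\ref{dropped_commutes}, \ref{ABC_exact}, \ref{alpha0} and the vanishing of $\gamma\circ\beta$), is that when $c$ is left unresolved the two placements produce complexes that are quasi-isomorphic to a common dropped target, so their ranks agree.

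Finally, non-splitness enters through the topology of the projection: since $L$ is non-split and $P_L$ is alternating, the diagram $P_L$ is connected as a planar graph, which implies that the ``component graph'' $\Gamma$ with vertex set $\{L_1,\ldots,L_k\}$ and an edge for each crossing between two distinct components is connected. In $(\zz/2)^k$, the vectors $\{e_i + e_j : L_i,L_j \text{ share a crossing in } P_L\}$ then span the hyperplane $\{\sum_i x_i = 0\}$, so the parity-flip moves above suffice to relate any two permissible parity tuples. Combining this with the reduction of the first paragraph gives the desired $\omega$-independence. The main obstacle I foresee is the parity-flip step itself: one must carefully verify that the dropped-crossing complex does not distinguish between the two strand placements at $c$, which requires an extension of the casework already appearing in Section~\ref{alternating_marked_points_section}, tracking how the single flipped endpoint affects which components in each pseudo-diagram resolution are killed.
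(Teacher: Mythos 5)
Your proposal is correct and follows essentially the same route as the paper: the paper also reduces the question to flipping the endpoint parities of two components sharing a crossing (it phrases this as adding a small arc $\omega_i$ straddling an inter-component crossing $X_i$, which is literally invisible to the dropped-crossing complex $C(P_L,\partial\omega,X_i)$ since the two strands at an unresolved crossing lie on one component of every pseudo-diagram resolution), and it uses non-splitness exactly as you do, via connectivity of the component graph. The only remark worth making is that your anticipated ``main obstacle'' dissolves: no extension of the casework is needed, because the two placements give \emph{identical} dropped-crossing complexes, and the already-established quasi-isomorphism to the dropped complex does the rest.
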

\begin{proof} Consider some arc in $\omega$ with its two endpoints on two components of $L$. We may call them $L_1,L_k$, where there are components $L_1,L_2,\ldots L_k$ such that $L_i$ and $L_{i+1}$ have a crossing between them (because $L$ is not split). Then we can replace $\omega$, with $\cup \omega_i$, where $\omega_i$ has one endpoint on $L_i$ and one on $L_{i+1}$ and is sitting on adjacent branches of a crossing between $L_i$ and $L_{i+1}$, which we call $X_i$.

Then adding each $\omega_i$ does not affect the complex, because we can consider the complex with $X_i$ unresolved. In the resulting pseudo-diagram resolution, $\omega_i$ cannot kill components (because $\omega_i$ sits across crossing $X_i$, which is not being resolved), meaning $\omega_i$ does not affect the underlying groups in the complex with $X_i$ unresolved, and it is easy to see from the definition that it also does not affect any of the differentials. Thus, it does not affect the homology of the complex with $X_i$ unresolved, which means it also does not affect the homology of the original complex.

Now we can remove the $\omega_i$ one by one, not changing the homology of the complex, so removing the entire arc does not change the homology of the complex. We may further remove all the arcs of $\omega$ one by one, as desired.

\end{proof}

As a consequence of this last corollary, we see that the marked Khovanov homology for alternating link projections we defined is just the usual Khovanov homology of the link. In particular, it is also a link invariant, completing the proof of Theorem \ref{naive_khov_invar} for $H(P_L,\partial \omega)$.

For the reduced version, note that the same proof holds: We may still form the complex with the dropped crossing, by taking quotients by $\la x \ra$ appropriately. Lemma \ref{ABC_exact} still holds because the sequence $A/\la x \ra \to A \otimes A/\la x \ra \to A \to A/\la x \ra \to A/\la x \ra$ with maps given by $\Delta_\red$, $m_\red$, and $0$ is still exact.

Then in the splitting of the complexes $A$, $B$, and $C$ into $D$, $E$, and $F$ there is a little bit of subtlety with the choice of $F$ in case $1a$ and $E$ in case $1b$. In particular, let us make the choice so that if the the base point is on the active components, then it is on the second component, so that, in case $1a$, we have $F_i = (1 \otimes 1)$ and $E_i = (x \otimes 1)$, and for case $1b$, $E_i = (1 \otimes 1)$ and $F_i = (x \otimes 1)$. Then, we again get direct sum decompositions.

This change in the choice for $F_i$ and $E_i$ does not affect the proof that $d_{EF}, d_{FD}$, and $d_{DE}$ vanish in general, nor for $\alpha$ or $\gamma \beta$ for alternating links.

Thus, by the same argument, we get that $H_\red(P_L,\partial \omega)$ does not depend on $\omega$, completing the proof of Theorem \ref{naive_khov_invar}.


\subsection{A discussion of filtrations for non-alternating links}\label{q_section}

\subsubsection{Dropped crossings without $\omega$}\label{subsubsec_drop_cross_no_omega}

We have shown that for alternating link projections, $\omega$ has no effect on $H(P_L,\partial \omega)$, by way of a complex that comes from dropping one crossing. The latter complex was inspired by the crossing dropping procedure that Kronheimer and Mrowka introduced in \cite{KM_filtrations}. 

Let us omit $\omega$ for the moment and examine more carefully how the property that the projection was alternating came into our picture, and how it relates to the one in \cite{KM_filtrations}. This will give another explanation for why one can drop a crossing when computing the Khovanov homology of alternating link projections (without $\omega$).

In link projection that is not necessarily alternating, most of the statements in subsection \ref{alternating_marked_points_section} regarding Khovanov complexes computed with dropped crossings continue to hold, though there are more cases to consider, and we must take more care when defining the differentials in the pseudo-diagram in the case of maps between resolutions with the same number of components.

In particular, this more subtle complex still commutes as in Lemma \ref{dropped_commutes}, and still fits into a larger complex with the dropped crossing resolved as in Lemma \ref{ABC_exact}. Moreover, the exact sequence still has the splitting into $D,E,F$, with $\alpha=0$. The main difference is that now $\gamma \beta$ does not necessarily vanish.

Thus, we still get a complex based on the resolutions with one dropped crossing, but now the cube may have diagonal maps across squares.

Let us compare this to what happens in \cite{KM_filtrations}, in which Kronheimer and Mrowka consider an oriented link projection $P_L$ and a subset $N$ of its crossings, such that resolutions of $P_L$ at the $N$ crossings yield pseudo-diagram resolutions. They form the complex $(\oplus C(L_v),d^\#)$ where $L_v$ runs over the resolutions of $L$, and chain maps arise from counting solutions to the ASD equation on the corresponding cobordisms. They further consider two filtrations on the complex, $h$ and $q$, coming from the topologies of the cobordisms, and show that the isomorphism types of the pages of the corresponding spectral sequences for both of these filtrations are invariants of $L$.

These filtrations are given by:
\[q = Q - \(\sum_{c \in N} v(c)\) + \frac{3}{2} \sigma(v, o) -n_++2n_-\]
and
\[h = - \(\sum_{c \in N} v(c)\) + \frac{1}{2} \sigma(v, o) +n_-,\]
where $Q$ is a grading on $(R[x]/x^2)^{\otimes l}$, which has $x$ in grading $-1$ and $1$ in grading $1$; on the summand $C_v$, where $v(c)$ denotes the resolution of $c$, that is, it is $0$ or $1$, depending on how $C$ is resolved in the pseudo-diagram resolution at $C_v$, with $v(c)=1$ for the $0$ resolution and $v(c)=0$ for the $1$-resolution; $o$ is a chosen vertex of the cube where the corresponding resolution can be oriented in a way that is consistent with the orientation on $L$; $\sigma(v,u)$ is the self intersection $S_{vo} \cdot S_{vo}$ of the cobordism $S_{vo}$ when $u \geq v$, and is defined for $u \not \geq v$ in such a way that it is additive, that is for $u,v,w$, $\sigma(w,v)+\sigma(v,u) = \sigma(w,u)$; and $n_+$ and $n_-$ the number of positive and negative crossings of the $N$ crossings, respectively.

\begin{rmk} In this subsection, we will be following the notation in \cite{KM_filtrations} and \cite{KM_unknot}, in which the maps go from the $1$ resolution to the $0$ resolution, so we are actually looking at the resolutions of the mirror image of the link. This is the reverse of the direction our maps were going in subsection \ref{alternating_marked_points_section}. 
\label{v_geq_u_notation}
\end{rmk}

Consider the first page of this spectral sequence, $(E_1,d_{1,h})$, where $d_{1,h}:F_i/F_{i+1} \to F_{i+1}/F_{i+2}$ is the map induced by the differential on the page of the spectral sequence arising from the $h$ filtration. Kronheimer and Mrowka show in \cite{KM_filtrations} Proposition 10.2 that when all the crossings of $L$ are resolved, only maps along the edges of the cubes come into $d_{1,h}$, and they show in \cite{KM_unknot} Section 8.2 that these maps agree with the Khovanov edge maps $d_1$.

Let us consider what happens when one crossing is left unresolved. In this case, the edge cobordisms in question have two possibilities: If the edge corresponds to a change in number of components in the resolution, then the cobordism is a pair of pants, and otherwise it is a twice punctured $\rr P^2$. (Here we are only concerning ourselves with the parts of the cobordisms between the active components; the rest of the cobordisms consist of cylinders, which contribute to neither $\chi(S)$ nor $S \cdot S$).

If a cobordism $S$ is a pair of pants, then $S \cdot S = 0$. If $S$ is a twice punctured $\rr P^2$, consider the two crossing projection given by the unresolved crossing and the crossing that corresponds to the edge in question. This cobordism is as depicted in Figure \ref{unlink_hopf_cobords}, with the map left to right corresponding to the Hopf link and right to left corresponding to the unlink. (This is the opposite to the Khovanov differentials because the $E_1,d_1$ page of the instanton complex corresponds to the mirror image of the Khovanov complex.)

\begin{figure}[ht!]
\centering
\includegraphics[width=45mm]{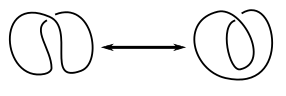}
\caption{}
\label{unlink_hopf_cobords}
\end{figure}

\begin{figure}[ht!]
\centering
\includegraphics[width=45mm]{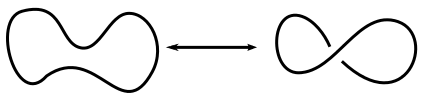}
\caption{}
\label{cobord_post_sliding}
\end{figure}

By sliding the arcs around it is easy to see that these are the same as the cobordisms in Figure \ref{cobord_post_sliding}. It was shown in Lemma 7.2 of \cite{KM_unknot} that the cobordism going from the right to the left in \ref{cobord_post_sliding} has self intersection $+2$, so the one left to right has self intersection $-2$. We conclude that for cobordisms corresponding to an unlink the self intersection is $+2$, and for Hopf links, it is $-2$. 

If $L$ is an alternating link projection, for $v \geq u$, if any punctured $\rr P^2$s are involved in the path from $v$ to $u$, they must correspond to Hopf links, rather than unlinks, because if we resolve some crossings of an alternating projection, the resulting projection is still alternating. Thus, for alternating projections, $S_{vu} \cdot S_{vu} \leq 0$. Consequently, the difference in $h$ satisfies
\[h(u)-h(v) = \sum v- \sum u -\frac{1}{2}\sigma(v,u),\]
which is at least $1$ for edges and at least $2$ for diagonals in the cube. Therefore the $(E_1,d_1)$ page does not involve diagonal maps in the cube, and it is easy to check that it agrees with the cube from our previous section.

If $L$ is non-alternating, however, there may be diagonal maps on the cube that are part of $d_1$, that is, which shift $h$ by $1$. This is because the change in $h$ as you go along the diagonal is equal to the change in naive grading (the grading on the cube), shifted by $\frac{1}{2}\sigma(v,u)$, but now for $v \geq u$, $\sigma(v,u)$ could be positive. Thus some diagonals could change $h$ by only $1$.

Let us consider which diagonals can appear in the $d_{1,h}$ level. It was shown in \cite{KM_filtrations} that when only one crossing is dropped, $|\sigma(v,u)| \leq 2$, for any two vertices, so the change in $h$ can be at most $1$ off from the change in naive grading. Thus the $d_1$ includes only edge maps and diagonal maps across squares. 

Now, using the $q$ grading, one can write down what the diagonal maps across squares must be.

\subsubsection{Figure \ref{non_alt_eg}: an example non-alternating link projection with $\omega$}

In the case of links with marking data, the pages with respect to the $q$ filtration in \cite{KM_filtrations} no longer provide invariants. This makes sense because the $q$ filtration comes from studying the maps in the cube of instanton complexes, which come from counting points in zero dimensional moduli spaces of certain anti-self-dual connections.

More specifically, for a cobordism $S \subset S^3 \times \rr$ from $L_v \subset S^3$ to $L_u \subset S^3$, and singular bundle data $P_\Delta$ on $(S, S^3 \times \rr)$, we are considering connections on $P_\Delta$ that satisfy the perturbed ASD equation and agree with $\beta_1$ and $\beta_0$ on the ends. For such connections, the action, which is given by
\[\kappa(A) =\frac{1}{8 \pi^2} \int_{X-\Sigma} \tr(F_A \wedge F_A), \]
is a homotopy invariant of the path $A$
and also satisfies
\[\kappa(A) \equiv  -\frac{1}{4} p_1(P_\Delta)[X_\Delta] +\frac{1}{16} S \cdot S \pmod{1/2},\]
Where $X_\Delta$ and $P_\Delta$ are set up as follows. 

Let $X=S^3 \times \rr$, and let $S \subset X$ be a two dimensional submanifold. Recall from \cite{KM_unknot} that a $PU(2)$ bundle $P$ on $X\backslash S$ modelled on $\frac{1}{4} i d\theta$ gives rise to a double cover $S_\Delta$ of $S$ coming from the two ways to extend $P$ to $S$. From this, Kronheimer and Mrowka constructed a non-Hausdorff space, $X_{\Delta}$, equipped with a map $X_{\Delta} \to X$ that is an isomorphism over $X\backslash S$ and such that the pre-image of $S$ is $S_\Delta$, and a $PU(2)$ bundle $P_\Delta$ over $X_\Delta$, which agrees with $P$ outside of a neighbourhood of $S_\Delta \subset X_\Delta$.

In \cite{KM_unknot} Kronheimer and Mrowka further constructed a space $X_\Delta^h$, a Hausdorff space with the same weak homotopy type as $X_\Delta$, and showed that $[X_\Delta]$ is a half integral class in $H_4(X_\Delta^h;\qq)$. Thus, for $p_1(P_\delta) \in H^4(X_\Delta^h,\zz)$, we may consider the half integer $p_1(P_\Delta)[X_\Delta]$. Moreover, since $p_1(P_\Delta) \equiv \cP(w_2(P_\Delta)) \pmod{4}$, where $\cP$ is the Pontryagin square, $p_1(P_\Delta)[X_\Delta] \equiv \cP(w_2(P_\Delta))[X_\Delta] \pmod{2}$.

For $\beta_1$ and $\beta_0$ solutions to the perturbed Chern Simons functional on the ends (flat connections in the unperturbed case), the dimension of the moduli space of solutions to the perturbed ASD equation that agree with $\beta_1$ and $\beta_0$ on the ends, in a homotopy class of paths with action $\kappa$ is given by the formula
\[\dim(M_\kappa(S; \beta_1, \beta_0)) = 8 \kappa + \chi(S)+\frac{1}{2} S \cdot S + Q(\beta_1)-Q(\beta_0)+ \dim(G)\]
where $Q$ is the grading on $A^\otimes n$ defined in subsection \ref{subsubsec_drop_cross_no_omega}, $G$ is the space of metrics over which the moduli space of connections sits, and $\chi$ is the Euler characteristic. The coefficient of $[\beta_0]$ in the image of $[\beta_1]$ under the differential is then given by counting the number of points in the zero-dimensional moduli space, ie, those paths with
\[8 \kappa + \chi(S)+\frac{1}{2} S \cdot S + Q(\beta_1)-Q(\beta_0)+ \dim(G)=0.\]

The non-negativity of the action for anti-self-dual connections implies that for small perturbations, $\kappa \geq 0$. Moreover, in the situation without $\omega$, $\frac{1}{4} p_1(P_\Delta)[X_\Delta] \equiv 0 \pmod {1/2}$, because $p_1(P_\Delta)$ is a multiple of $4$ and $[X_\Delta]$ is a half integral class. Thus, $\kappa \geq \frac{1}{2} S \cdot S - 4 \lfloor \frac{S \cdot S}{8} \rfloor$.

The proof of invariance of the isomorphism type of the complex in the category of homotopy classes of $q$ or $h$ filtered chain complexes comes from keeping track of constraints on the edges and diagonal maps in the cube coming from the dimension formula above. 

In the fully resolved case, invariance of the isomorphism type of the Khovanov homology could be extracted from looking at the $h$ filtration for the diagonal maps and showing that there are no diagonal maps on the cube with $h$-order $1$. Thus, the isomorphism type of the Khovanov homology agrees with that of the $E^2$ page of the instanton complex with respect to the $h$ filtration, and is therefore a link invariant.

In the case of the counterexample in Figure \ref{non_alt_eg} from subsection \ref{alternating_marked_points_section}, we can still use the dimension calculation essentially to write down what the diagonal maps on the cube of instanton complexes have to be. Consider the cube of resolutions in Figure \ref{trefail_omega_resn}. The groups $C_{1,0,0}$, $C_{1,1,0}$, $C_{1,0,1}$, and $C_{0,1,1}$ vanish. 

\begin{figure}[ht!]
\centering
\includegraphics[width=120mm]{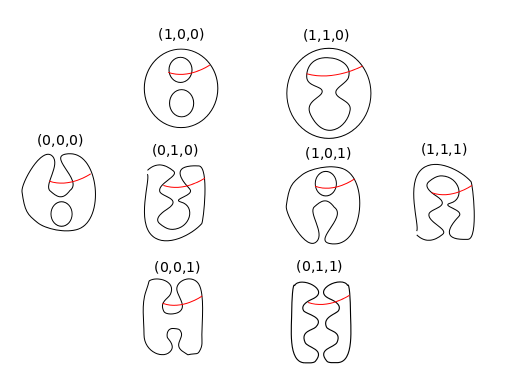}
\caption{}
\label{trefail_omega_resn}
\end{figure}

The maps $C_{0,0,0} \to C_{0,1,0}$ and $C_{0,0,0} \to C_{0,0,1}$ can be seen to be merge maps, as seen in Section 8 of \cite{KM_unknot}. The remaining possible maps are $C_{0,0,0} \to C_{1,1,1}$, $C_{0,1,0} \to C_{1,1,1}$, and $C_{0,0,1} \to C_{1,1,1}$.

From the definition given in \cite{KM_unknot}, $\dim(G)$ is one less than the number of crossings in the cobordism. It is then easy to see that $\chi(S) = -\dim(G)-1$, so $\chi(S)+\dim(G) = -1$. Moreover, since the cobordisms in the diagram are orientable, $S \cdot S = 0$. Thus for moduli spaces of dimension $0$, we must have
\[Q(\beta_0)-Q(\beta_1) = 8 \kappa -1.\]

As in \cite{KM_filtrations}, we have $\kappa \geq 0$ and $\kappa = \frac{1}{4} p_1(P_\Delta)[X_\Delta]$. However, $p_1(P_\Delta)$ is no longer a multiple of 4. To figure out what it is instead, let us consider the cobordisms in question. The cobordisms $(0,1,0) \to (1,1,1)$ and $(0,0,1) \to (1,1,1)$ are twice punctured tori and the cobordism $(0,0,0) \to (1,1,1)$ is a thrice punctured torus. If we cap off the ends, we get a torus, with $\partial\omega$ given by a circle that winds once around each representative of $H^1$.

To calculate the action in this situation, let us consider the double branched cover. The double branched cover of $T^2$ in $S^4$ is $S^2 \times S^2$, with $T^2 = S^1 \times S^1$ sitting inside it as the product of the equators of the two $S^2$s. In this picture, $\omega \subset S^4$ represents the $H^2(S^2 \times S^2)$ class $[S^2 \times \text{pt}]+ [ \text{pt} \times S^2]$. Consequently, on the double branched cover, $\mathcal{P}(w_2(P)) \equiv 2 \pmod{4}$, where $\mathcal{P}$ denotes the Pontrjagin square. Thus, $p_1(P) \equiv 2 \pmod{4}$, and $\kappa = -\frac{1}{4} p_1(P)[S^2 \times S^2] \equiv \frac{1}{2} \pmod{1}$. The action on the double branched cover is twice the action on the base, so on the original space, $\kappa \equiv \frac{1}{4} \pmod{2}$. 

From here, we see that $Q(\beta_0)-Q(\beta_1) = 8 \kappa -1 \equiv 1 \pmod 4$, so, by parity, the only possible diagonal map is the one $C_{0,0,0} \to C_{1,1,1}$, which takes $x \otimes x$ to $x$ and either $1 \otimes x$ or $x \otimes 1$ to $1$. We know these maps must appear in the instanton complex, because otherwise it would be impossible to end up with the right value for the instanton homology.

\begin{rmk}This does not tell us which of the maps $1 \otimes x \to 1$ and $x \otimes 1 \to 1$ happens. The specific map in the chain complex may depend on the choice of perturbation. 
\end{rmk}


\subsection{Modifying the $q$ filtration in the presence of $\omega$}

In the previous section, we explained what happens to Kronheimer and Mrowka's $q$ and $h$ filtrations when a crossing is dropped in the case of non-alternating links. In this section we give a modification of the $q$ filtration to show an analogous result to the $q$ part of Corollary 1.3 in \cite{KM_filtrations}, which stated that the isomorphism types of the pages of the spectral sequence with respect to the $q$ filtration are link invariants.

For a projection $P_L$ of a link, taking the cube of pseudo-diagram resolutions with $0$, $1$, or $2$ adjacent (meaning there are no crossings or endpoints of $\omega$ between them), opposite sign dropped crossings, with certain $\omega$, we define a modified version of the $q$ filtration for the cube. Let us define the particular kind of $\omega$ that we would like to work with. 

\begin{defn} We say that $\omega$ is ``trivial'' at a resolution if no component has an odd number of end-points. We say that $\omega$ is ``good'' if for every cobordism between projections with trivial $\omega$ (ie, for all diagonals $vu$ of the cube, including those which do not satisfy $v \geq u$), once we cap off the ends, and consider the resulting closed orientable surface with genus, $\omega$ does not intertwine any of the genus. This is equivalent to saying that for every such cobordism, $\cP(w_2) \equiv 0 \pmod{4}$, where $\cP$ is the Pontrjagin square.
\end{defn}

In this subsection we will show the following theorem regarding good $\omega$.

\begin{thm} Let $P_L$ be a link projection in $S^2$ and $\omega$ be good marking data. Let $B \subset S^2$ be a ball containing $\omega$. Then the isomorphism type of $Kh(P_L,\omega)$ as defined in the introduction is a tangle invariant of $P_L \cap (S^2 \backslash B)$. 
\label{e1_tangle_invar}
\end{thm}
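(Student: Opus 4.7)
The plan is to adapt Kronheimer and Mrowka's invariance argument for the pages of the $q$-filtration spectral sequence from \cite{KM_filtrations} to the marked setting with good $\omega$. Building on the previous subsection, I would define a modified filtration $q_\omega$ on the instanton cube of resolutions for $(P_L,\omega)$ whose $E_\infty$ page is $I^\#(L,\omega)$. The modification augments the usual $q$-grading of \cite{KM_filtrations} by a term that encodes the Pontrjagin square contribution $\cP(w_2(P_\Delta))[X_\Delta]$ on the cobordisms $S_{vu}$. Concretely, the refined action formula $\kappa \equiv -\tfrac{1}{4} p_1(P_\Delta)[X_\Delta] + \tfrac{1}{16} S\cdot S \pmod{1/2}$ recalled in Subsection \ref{q_section} dictates how to shift $q$ so that, in the dimension formula
\[\dim M_\kappa(S;\beta_1,\beta_0) = 8\kappa+\chi(S)+\tfrac{1}{2} S\cdot S+Q(\beta_1)-Q(\beta_0)+\dim G,\]
every contribution to the instanton differential has non-negative $q_\omega$-order. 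The good hypothesis, that $\cP(w_2(P_\Delta)) \equiv 0 \pmod 4$ on cobordisms between trivially-marked resolutions, is exactly what ensures $q_\omega$ is integer-valued and that the order-$0$ part of the differential recovers the Khovanov edge maps $m$ and $\Delta$ on $C(P_L,\partial\omega)$.

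Next I would verify that the $E_1$ page with respect to $q_\omega$ coincides with $H(P_L,\partial\omega) = Kh(P_L,\omega)$: by the calculation above, contributions from square and higher-codimension cobordisms all carry strictly positive $q_\omega$-order, so only edge maps survive on $E_1$, and these agree with $m$ and $\Delta$ as in Section 8.2 of \cite{KM_unknot}. To obtain the tangle invariance, I would then invoke the Reidemeister-move argument of \cite{KM_filtrations}: each Reidemeister move performed within $S^2 \setminus B$ is realised by a cobordism supported away from $\omega$, so the good condition on $\omega$ and the associated $q_\omega$-shifts are preserved throughout. The resulting filtered chain homotopy equivalence of instanton complexes descends to a quasi-isomorphism of $E_1$-pages, and by the identification above this is a quasi-isomorphism of $Kh(P_L,\omega)$, giving the claimed tangle invariance.

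The main obstacle will be the verification in Step 1 that no low-$q_\omega$-order diagonal contributions appear in the instanton differential. In the alternating unmarked case of the previous subsection this followed from negativity of $\sigma(v,u)$; here, the good hypothesis must play the analogous role, in combination with $\kappa \geq 0$ for perturbed ASD connections. A careful case analysis of cobordisms, in the spirit of the explicit calculation performed for Figure \ref{trefail_omega_resn}, will be needed to show that every diagonal map that could survive has strictly positive $q_\omega$-order once the Pontrjagin-square correction is incorporated.
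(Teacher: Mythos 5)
Your proposal follows essentially the same route as the paper: a $\pi$-corrected $q$-filtration governed by $\cP(w_2(P_\Delta))$ on the cobordisms, non-negativity of the differential's $q$-order via $\kappa \geq 0$ together with the mod-$8$ congruence between $\pi(v,u)$ and $-4p_1(P_\Delta)[X_\Delta]$, identification of the $E^1$ page with $Kh(P_L,\omega)$ because only edge maps have $q$-order zero, and tangle invariance via the dropped-crossing and isotopy machinery of \cite{KM_filtrations}. The obstacle you flag at the end is exactly what Lemma \ref{q_on_cube_diffs} resolves, so the plan is sound.
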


Similarly to the proof of the main theorem in \cite{KM_filtrations}, we will accomplish this by way of a filtration on the instanton complex. Before introducing the filtration, it will be useful to give a property of good $\omega$.

\begin{lemma} For good $\omega$, in the (fully resolved) cube of resolutions, if $u,v$ are two vertices such that $\omega$ is trivial at both of these vertices, then there is a path from $v$ to $u$ that only goes through vertices at which $\omega$ is trivial. Moreover, there is such a path of length $|v-u|_1$ and for $v \geq u$, there is such a path $v= v_1 \geq v_2 \geq \cdots \geq v_k =u$. 
\end{lemma}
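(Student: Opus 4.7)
I would prove the stronger claim that \emph{every} length-$|v-u|_1$ path from $v$ to $u$ (equivalently, every ordering of the flips of coordinates in $I := \{j : v_j \neq u_j\}$) passes only through trivial vertices. Both conclusions of the lemma follow immediately: for general $v,u$ this gives the desired path of length $|v-u|_1$, and when $v \geq u$ the length-$k$ paths from $v$ to $u$ are exactly the monotone chains $v = v_1 \geq v_2 \geq \cdots \geq v_k = u$.

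Given such a path with flip order $i_1, \ldots, i_k$, associate the cobordism $S \subset S^3 \times [0,1]$ between $L_v$ and $L_u$ obtained by stacking $k$ saddles, one at each coordinate $i_j$ in that order. For $v \geq u$ this is the usual cobordism coming from the cube; for incomparable $v, u$ we still concatenate pair-of-pants saddles (oriented up or down as needed), and the result remains orientable. The intermediate vertices $w_0 = v, w_1, \ldots, w_k = u$ of the path are precisely the level sets of the natural Morse function on $S$ between consecutive critical values. Capping off $S$ at both boundary components gives a closed orientable surface $\bar{S}$, and by the definition of \emph{good} just stated in the excerpt, $\cP(w_2(P_\Delta)) \equiv 0 \pmod 4$ on $\bar{S}$ is equivalent to the statement that $[\omega] = 0 \in H_1(\bar{S}; \zz/2)$, where $\omega$ is interpreted as the Poincar\'e dual $1$-cycle to $w_2(P_\Delta)$ on $\bar{S}$. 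Hence $\omega = \partial T$ for some $\zz/2$ 2-chain $T$, equivalently a $\zz/2$-coloring of $\bar{S}$ whose color boundary is $\omega$.

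Restricting this coloring to each intermediate level set -- a disjoint union of circles in an $S^2$-slice of the cobordism -- gives a $\zz/2$-valued function on each circle that changes value exactly at the intersections with $\omega$. Because each circle is closed, the number of value changes around it must be even, so each circle meets $\omega$ in an even number of points. Hence every component of every $w_j$ has an even $\omega$-endpoint parity, i.e., each $w_j$ is trivial, which is exactly the claim.

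The main obstacle will be the homological reformulation of the goodness condition: unpacking the definitions of $X_\Delta$ and $P_\Delta$ from Section \ref{q_section} and identifying $\omega$ with the Poincar\'e dual of $w_2(P_\Delta)$ on $\bar{S}$, so as to confirm that $\cP(w_2(P_\Delta)) \equiv 0 \pmod 4$ really does correspond to $[\omega] = 0$ in $H_1(\bar{S}; \zz/2)$. A secondary point to check is that for incomparable $v, u$ one can in fact realize any prescribed flip order by an orientable cobordism; this is straightforward since each individual saddle is an orientable pair of pants. Once these points are settled, the remainder of the argument -- the Morse-theoretic identification of level sets with intermediate cube vertices and the parity of $\zz/2$-coloring changes around a circle -- is elementary.
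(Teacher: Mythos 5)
There is a genuine gap, and it sits exactly at the point you flagged as ``the main obstacle'': the condition $\cP(w_2(P_\Delta)) \equiv 0 \pmod 4$ is \emph{not} equivalent to $[\omega] = 0$ in $H_1(\bar{S};\zz/2)$. The Pontrjagin square is a quadratic refinement of the cup square and vanishes mod $4$ on many nonzero classes. The paper's own sample computation shows this: for the capped-off torus cobordism the relevant four-manifold is $S^2 \times S^2$, where $\cP(a+b) \equiv 2 \pmod 4$ but $\cP(a) = \cP(b) = 0$; so an $\omega$ whose trace winds once around just \emph{one} of the two generators of $H_1(T^2;\zz/2)$ is still ``good'' even though its class is nonzero. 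Consequently your stronger claim --- that \emph{every} length-$|v-u|_1$ path stays trivial --- is false. Concretely, take a split-followed-by-merge pair of crossings whose capped-off cobordism is a torus: the two orderings of the two flips produce intermediate level circles lying in dual classes $m,m'$ with $m \cdot m' = 1$, and an $\omega$ whose trace represents $m'$ is good (since $\cP(m')=0$) yet pairs oddly with the intermediate circles of the first ordering, so that ordering passes through a killed vertex while the other does not. Your $\zz/2$-coloring argument requires $[\omega]$ to pair evenly with \emph{every} level circle of \emph{every} ordering, which is precisely the statement $[\omega]=0$ that goodness does not give you.

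This is why the paper's proof must be constructive about the \emph{order} of the flips: it first performs all merges (which preserve evenness), then handles each split-then-merge torus piece by using goodness to select \emph{some} crossing whose splitting keeps both wings even --- ``does not intertwine the genus'' buys the existence of one good choice, not the goodness of all choices --- and finishes with the splits that are never re-merged, whose intermediate components are unions of components of $u$ and hence automatically even. To repair your approach you would have to downgrade ``every ordering'' to ``some ordering'' and supply this selection step, at which point you are essentially reproducing the paper's greedy argument; note also that the monotone-path assertion for $v \geq u$ would then need to be checked for the chosen ordering rather than following for free.
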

\begin{proof} Consider going from the resolution of $v$ to the resolution of $u$ applying the following steps greedily:
\begin{enumerate}
\item merge
\item split into pieces with a remaining crossing between them; ie that will later be merged
\item split into pieces with no remaining crossing between them
\end{enumerate}
Then, it is easy to see that the sequence of moves must be of the form:
\[mm \ldots m (\Delta m) (\Delta m) \ldots (\Delta m) \Delta \Delta \ldots \Delta\]
because after we do merges until we cannot do any more, we have components with only crossings to themselves. Then if we do a type 2 split, we immediately do a merge, and we are again in a situation where no crossings go between components. This proceeds until we can no longer do 1 or 2, at which point there are only splits left.

Now note that since we started and ended with trivial $\omega$, the initial merges and final splits all preserve this trivialness. For the $(\Delta m)$s going from trivial $\omega$ configurations to trivial $\omega$ configurations, if we look at the cobordism capped off, it is a torus, and it is easy to see that if $\omega$ does not intertwine this torus, there must be at least one crossing we can split at that does not make $\omega$ non-trivial, so we split at that crossing. This completes the proof.
\end{proof}

Let us now define a filtration $q$ on the complex associated to the cube of pseudo-diagram resolutions. We will consider in particular three types of cubes of pseudo-diagram resolutions: those that come from a projection for which we resolve all crossings, those for which we resolve all but one crossing, and those for which we resolve all but two adjacent, opposite-sign crossings.

Our $q$ is only defined for vertices of the cube at which $\omega$ is trivial; at other vertices, the group is $0$ anyway, so it does not matter what filtration we choose). 

For a generator $q$ corresponding to a critical point for the resolution at a vertex $v$, define
\begin{equation}
q(a) = Q(a)- \sum_c v(c) + \frac{3}{2} \sigma(v, o)+ \pi(v,o),
\label{q_defn}
\end{equation}
where $\pi(v,u)$ is defined below, $o$ is a globally chosen vertex of the cube so that the $\omega$ is trivial at that vertex. 

\begin{rmk} There is a choice of $o$ involved in the definition of $q$, but this will not matter, because the results we will extract from the $q$ filtration will only require $q$ to be defined up to a constant shift for the whole complex.
\end{rmk}

In this subsection, as in the previous, our maps are going from $v$ to $u$ with $v\geq u$ (see Remark \ref{v_geq_u_notation}). 

\begin{defn} Let $D$ be the set of dropped crossings. Let $\pi(v,u) = \pi(u)-\pi(v)$, where \[\pi(v) = \sum_{c \in D} (-2)(\text{sign}(c))s_\omega(c)\]
where $\text{sign}(c) = \pm 1$ and $s_\omega(c)$ is $1$ or $0$ depending on the parity of the number of $\omega$ endpoints on each of the wings that $c$ divides its component into, if applicable; that is
\[s_\omega(c) = \begin{cases} 0 & \text{$c$ does not divide one component into two components}\\
0 & \text{each wing has an even number of endpoints}\\
1 & \text{each wing has an odd number of endpoints}\\
\end{cases}\]
where the first of the three cases is only possible when there are two dropped crossings and the picture looks like the middle picture in Figure \ref{drop2_maps}. In particular, when there are no dropped crossings, $\pi=0$. 

\begin{figure}[ht!]
\centering
\includegraphics[width=120mm]{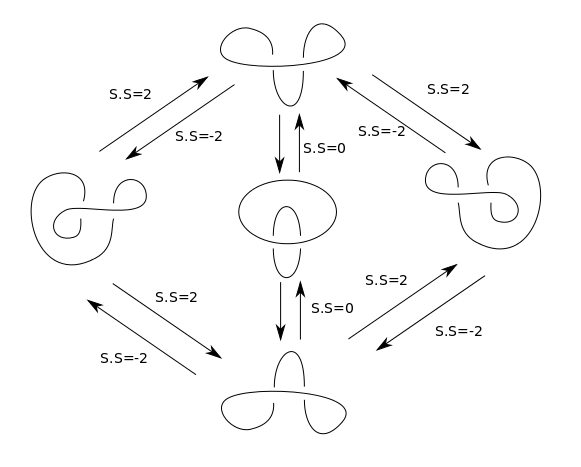}
\caption{This figure shows all possible configurations with two adjacent dropped crossings. The picture on the left shows the pseudo-diagram resolution in which both unresolved crossings are negative, and the picture on the right has both unresolved crossings positive. The column in the middle consists of pseudo-diagram resolutions with one positive and one negative unresolved crossing. The maps depicted are all of the possible }
\label{drop2_maps}
\end{figure}
\end{defn}

Note that for $v$ corresponding to a configuration on the left hand side of Figure \ref{drop2_maps}, the possible values of $\pi$ are $0$, $2$, and $4$. For the middle column configurations they are $0$ and $\pm 2$, and for the configuration on the right, they are $0$, $-2$, and $-4$.

By construction, $\pi(v,w) = \pi(v,u)+\pi(u,w)$.

\begin{lemma} For a cobordism from the pseudo-diagram resolution at $v$ to that at $u$, with good $\omega$, $\pi(v,u) \equiv -4p_1(P_\Delta)[X_\Delta] \pmod{8}$.

Moreover, if $N'$ has one more dropped crossing than $N$, (ie $N$ is all the crossings and $N'$ is all but one, or $N$ is all but one, and $N'$ is all but two, the one missing in $N$ and another adjacent one of opposite sign), consider the complex over $\zz \times \{0,1\}^|N'|$, with vertical cobordisms as in \cite{KM_unknot}. We can still define $\pi(v,u)=\pi(u)-\pi(v)$. In this situation, we still have $-4p_1(P_\Delta)[X_\Delta] \pmod{8}$. 
\label{pi_p1_mod_8}
\end{lemma}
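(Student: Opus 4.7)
The overall strategy is to reduce the congruence to a verification on a minimal set of cube moves via additivity, and then to handle each move type separately.

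First I would establish additivity. The left-hand side $\pi(v,u) = \pi(u) - \pi(v)$ is additive in $(v,u)$ by construction, so $\pi(v,w) = \pi(v,u) + \pi(u,w)$. The quantity $-4p_1(P_\Delta)[X_\Delta]$ is additive modulo $8$ under composition of cobordisms: if $S = S_1 \cup_u S_2$ is obtained by gluing along the common end at vertex $u$, then the singular bundle datum $P_\Delta$ glues to give $p_1(P_\Delta^S)[X_\Delta^S] = p_1(P_\Delta^{S_1})[X_\Delta^{S_1}] + p_1(P_\Delta^{S_2})[X_\Delta^{S_2}]$, which is the characteristic-number form of the action gluing formula used throughout \cite{KM_unknot}. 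Thus it suffices to verify the congruence on single-step moves in the cube.

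Second, I would treat the case of edges of the fully resolved cube (no dropped crossings). Here $\pi \equiv 0$ trivially. Using the previous lemma, any two trivial-$\omega$ vertices can be joined by a path passing only through trivial-$\omega$ vertices, so reduction to a single edge-cobordism is legitimate. On such an edge the cobordism $S$ (restricted to active components) is a pair of pants, so $S \cdot S = 0$. Goodness of $\omega$ ensures $\mathcal{P}(w_2(P_\Delta)) \equiv 0 \pmod 4$ after capping off, and combined with $p_1 \equiv \mathcal{P}(w_2) \pmod 4$ and the half-integrality of $[X_\Delta]$ this yields $4 p_1(P_\Delta)[X_\Delta] \equiv 0 \pmod 8$, matching $\pi \equiv 0$.

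Third, I would compute the effect of dropping a crossing $c$, which accounts for the ``moreover'' statement and (through telescoping) the two-dropped-crossing case. Relative to the fully resolved cobordism, leaving $c$ unresolved attaches a local model near $c$; in the double branched cover (where the computation of $\mathcal{P}(w_2)$ is most transparent, following the trefoil calculation in the preceding subsection) this amounts to attaching a new 2-handle whose core lifts to a generator of $H_2$. The class $w_2(P_\Delta)$, which is Poincar\'e dual to $\omega$, pairs with this new generator exactly when $\omega$ threads the handle — that is, precisely when the two wings of the component divided by $c$ carry odd numbers of $\omega$-endpoints, which is the condition $s_\omega(c)=1$. A direct evaluation in this local model should show that the extra contribution to $\mathcal{P}(w_2)[X_\Delta]$ is $\pm 2 s_\omega(c)$ with sign determined by $\mathrm{sign}(c)$, so the contribution to $-4 p_1(P_\Delta)[X_\Delta] \pmod 8$ is $-2\,\mathrm{sign}(c)\, s_\omega(c)$, in exact agreement with the edge-increment $\pi(v) \to \pi(v)$ of the new dropped crossing $c$.

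The main obstacle I expect is the local calculation in the third step: identifying the precise contribution $-2\,\mathrm{sign}(c)\, s_\omega(c)$ of a dropped crossing to $\mathcal{P}(w_2)[X_\Delta]$ modulo $4$, with the correct sign bookkeeping. This requires a careful model in the double branched cover at the saddle, tracking how the sign convention for the crossing $c$ (inherited from an orientation of $L$) matches the orientation of the new 2-cycle, and how $\omega$'s intersection with that cycle realizes the sign. Once this local computation is in hand, the additivity of both sides propagates it to all of the cube moves — including the horizontal edges of the extended cube over $\zz \times \{0,1\}^{|N'|}$ from the ``moreover'' clause, whose vertical direction contributes only a trivially good cylindrical cobordism that neither changes $\pi$ nor contributes to $p_1(P_\Delta)[X_\Delta]$ modulo $8$.
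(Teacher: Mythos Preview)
Your additivity reduction and your step 2 (edges of the fully resolved cube, pair-of-pants, goodness of $\omega$ forces $\cP(w_2)\equiv 0$) are correct and match the paper. The problem is in step 3 and in your handling of the ``moreover'' clause.

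First, the last sentence of your proposal is simply wrong: in the extended complex over $\zz \times \{0,1\}^{|N'|}$ the vertical direction is \emph{not} a cylinder. The $\zz$-coordinate records the resolution of the extra dropped crossing, and the vertical cobordisms are exactly the nontrivial pieces---pairs of pants when the number of components changes, and twice-punctured $\rrp^2_\pm$'s when it does not. These are the cobordisms on which both $\pi$ and $p_1(P_\Delta)[X_\Delta]$ pick up their nontrivial contributions, so you cannot discard them.

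Second, the ``local model'' you sketch in step 3 does not quite line up with the objects in the lemma. You propose to compare $p_1(P_\Delta)[X_\Delta]$ for a cobordism in the $N'$-cube against some corresponding cobordism in the $N$-cube, the difference coming from a handle attached near $c$; but there is no canonical such correspondence of cobordisms, and $\pi(v)$ is attached to a vertex, not to a cobordism, so the bookkeeping you outline does not obviously close up. The paper sidesteps this by reversing the order of your bootstrapping: it first proves the congruence directly on the \emph{vertical} maps of the extended cube (these are the length-$1$ or length-$2$ cobordisms over the dropped crossing), case-splitting into pair-of-pants versus $\rrp^2_\pm$, and quoting the computation $p_1(P_\Delta)[X_\Delta] \equiv \tfrac12 \pmod 2$ for $\rrp^2_\pm$ with nontrivial bundle data from \cite{KM_unknot}, section~2.7. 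The role of good $\omega$ there is to exclude the case $s_\omega(c)=1$ on both ends simultaneously. Only after the vertical case is in hand does additivity, together with the already-known case of fewer dropped crossings, handle the horizontal maps---by composing a horizontal map with vertical maps to push it down to the $0$ or $1$ level mod $3$.

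So your reduction is sound but your base case is mislocated: the elementary cobordism you need to analyse by hand is the vertical $\rrp^2_\pm$, not a handle attachment in a double branched cover, and the induction runs vertical-then-horizontal rather than fully-resolved-then-drop.
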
 
\begin{proof} Let us show the second statement only; the first follows.

We begin by showing it for vertical maps, that is the one corresponding to the extra dropped crossing in $N'$. Since both $p_1$ and $\pi(v,u)$ are additive, it suffices to show $\pi(v,u) \equiv -4p_1(P_\Delta)[X_\Delta] \pmod{8}$ for cobordisms of length $1$ or $2$, with the ones of length $2$ being from a split followed by a merge where the middle term is killed by $\omega$.

\begin{itemize}

\item  If the cobordism is length 1 and is a merge or a split, where neither the source nor the target is killed by $\omega$: the cobordism corresponds to splitting into parts each of which has an even number of endpoints of $\omega$, which does not affect the contribution to $\pi(v)$ for any unresolved crossing, so $\pi(v,u)=0$. On the other hand, the cobordism is a pair of pants, which, upon having ends capped off, becomes a sphere, for which $-4p_1(P_\Delta)[X_\Delta]  \equiv 0 \pmod{8}$.

\item  If the cobordism is length 1, and it is between two resolutions with the same number of unresolved crossings and the same number of components: because we are looking at a vertical map, the number of unresolved crossings must be $1$, so the cobordism is between a component with one negative crossing and a component with one positive crossing (along with some cylinders for the other components). 

If the cobordism goes from negative to positive, then it is a $\rrp^2_+$ with ends. By the computation in section 2.7 of \cite{KM_unknot}, $\rrp^2_+$ has two possibilities for the singular bundle data. In the non-trivial case, $p_1(P_\Delta)[X_\Delta] \equiv \frac{1}{2} \pmod 2$, so $4p_1(P_\Delta)[X_\Delta]=2$. 

Note that the fact that $\omega$ is good means that it is not possible that $s_\omega(c)=1$ for the unresolved crossing in both the source and the target, so the only possibilities are if in both $s_\omega(c)=0$, in which case $\pi(v,u)=0$ and $4p_1(P_\Delta)[X_\Delta]\equiv 0\pmod 8$, or if $s_\omega(c)=1$ on one of the sides and $0$ on the other, in which case $\pi(v,u)=-2$, and $4p_1(P_\Delta)[X_\Delta]\equiv -2 \pmod 8$, as desired.

Similarly, if the map goes from positive to negative, then the cobordism is a twice-punctured $\rrp^2_-$, and the same argument applies with the signs reversed.

\item If the cobordism is length 1 and preserves the number of components but changes the number of unresolved crossings: Let $c$ be the crossing that is unresolved in exactly one of the source and the target. Then only $c$ contributes to $\pi(v,u)$. Moreover, if $c$ is a positive unresolved crossing in the source, or a negative one in the target, then the cobordism is $\rrp^2_-$ and otherwise it is $\rrp^2_+$. Moreover the singular bundle data is nontrivial if and only if $s_\omega(c)=1$ for the unresolved projection. The computation is now similar to the previous case. 

\item  If the cobordism is length 2: By Lemma 7.2 of \cite{KM_unknot}, the composite cobordism $S_{20}=S_{10}\circ S_{21}$ is $(I \times S^3, V_{20}) \# (S^4, \rrp^2)$, where $V_{20}$ is the reverse of $S_{32}$. The $(B^4, \rrp^2)$ in this decomposition is localised around $c$, and the singular bundle data may be taken to be trivial there. Thus, the calculation for this case is the same as that for the previous two cases, but with signs reversed.

\end{itemize}

For horizontal maps, we can use the vertical maps to translate the horizontal so that it is confined to the $0,1 \pmod{3}$ levels (choosing the right one of the $0$ or $1$ mod 3, so that $\omega$ is still trivial, and applying the fact that the lemma holds for no dropped crossings (so all $\pi=0$) to show that it holds for $1$ dropped crossing, and then use that it holds for one dropped crossing to show that it holds for $2$.
\end{proof}

The main result of this subsection will be the use of the $q$ filtration to extract the following proposition:

\begin{propn} Let $\cC_q$ be the category of $q$-filtered finitely generated differential $\zz/2$ modules with differentials of order $ \geq 0$, whose morphisms are differential homomorphisms of order $ \geq 0$ up to chain homotopies of order $ \geq 0$. Then the isomorphisms type of the instanton complex of $(P_L,B,\omega)$ is a tangle invariant up to shift in $q$. That is, if $A$ is the $q$-filtered instanton complex corresponding to $(P_L,B,\omega)$ and $A'$ is that corresponding to $(P'_L,B,\omega)$ where $(P_L,B,\omega)$ and $(P'_L,B,\omega)$ represent the same tangle, then $A$ is isomorphic to $A'[c]$ in $\cC_q$, where $A'[c]$ denotes $A'$ with the filtration shifted by $c$.
\label{C_q_tangle_invar}
\end{propn}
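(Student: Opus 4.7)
The plan is to follow the strategy of Kronheimer and Mrowka in \cite{KM_filtrations}, adapted to incorporate $\omega$ through the $\pi$-correction to the $q$-filtration in equation \eqref{q_defn}. The two ingredients I would establish in turn are: (i) that the instanton differential on the complex associated to $(P_L,\omega)$ has $q$-order $\geq 0$, and (ii) that for each Reidemeister move and planar isotopy supported strictly inside $B$, the associated chain maps and homotopies realising an isomorphism in $\cC_q$ are themselves of $q$-order $\geq 0$, up to an overall shift.

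For (i), a non-vanishing matrix entry from $[\beta_v]$ to $[\beta_u]$ requires a point in a zero-dimensional moduli space $M_\kappa(S; \beta_v, \beta_u)$ on the cobordism $S$ from $v$ to $u$, so that $Q(\beta_v) - Q(\beta_u) = 8\kappa + \chi(S) + \tfrac{1}{2}S\cdot S + \dim G$. Substituting into the definition of $q(v) - q(u)$ and using Lemma \ref{pi_p1_mod_8} together with the action congruence $\kappa \equiv -\tfrac{1}{4}p_1(P_\Delta)[X_\Delta] + \tfrac{1}{16}S\cdot S \pmod{1/2}$ and $\kappa \geq 0$, I would show that the minimal $\kappa$ satisfying the congruence already makes $8\kappa - \pi(v,u)$ a non-negative integer, while the remaining topological terms $\chi(S) + \tfrac{1}{2}S\cdot S + \dim G + |v| - |u| - \tfrac{3}{2}\sigma(v,u)$ are controlled by the cobordism-type enumeration carried out in Section \ref{q_section} (pair-of-pants, twice-punctured $\rrp^2_{\pm}$, length-two concatenations). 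This is precisely the numerical identity that $\pi$ was constructed to enforce, so the calculation reduces case-by-case to the one in \cite{KM_filtrations} with the extra $\omega$-contribution absorbed by $\pi$.

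For (ii), any two diagrams $P_L, P'_L$ which agree outside $B$ and share the same $\omega$ are connected by a finite sequence of Reidemeister moves and planar isotopies supported inside $B$. For each such move, \cite{KM_filtrations} produces chain maps from the elementary cobordisms and homotopies realising the inverse identities. In our setting these cobordisms carry $\omega$, but since the move is local, the only change to $p_1(P_\Delta)$ is local and is accounted for by the local change in $\pi$ via Lemma \ref{pi_p1_mod_8}, even when the move slides an $\omega$-endpoint across a strand or crossing. Rerunning the dimension/action computation of (i) on each move cobordism shows that the chain maps and homotopies have $q$-order $\geq 0$, so they descend to isomorphisms in $\cC_q$. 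The cumulative normalisation discrepancy between $(P_L,\omega)$ and $(P'_L,\omega)$ (coming from conventions on $n_+$, $n_-$, and the reference vertex $o$) contributes only a global additive constant, which is the shift $[c]$ in the statement.

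The main obstacle is verifying that \emph{good} $\omega$ is strong enough to make the bound in (i) hold uniformly over all Reidemeister cobordisms, particularly those that move $\omega$-endpoints non-trivially. The worrying scenario is a capped-off cobordism on which $\omega$ intertwines some genus, which would violate the congruence underlying Lemma \ref{pi_p1_mod_8} and introduce an integer obstruction of the wrong parity. But this is exactly what the definition of \emph{good} rules out, by requiring $\cP(w_2(P_\Delta)) \equiv 0 \pmod{4}$ on every such capped-off cobordism. With this hypothesis the congruences of $8\kappa$ and $\pi(v,u)$ match, the non-negativity bound closes, and the argument then proceeds as in \cite{KM_filtrations}.
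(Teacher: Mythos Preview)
You have reversed the geometry of the statement. The ball $B$ \emph{contains} $\omega$, and the claim is that the complex is an invariant of the tangle $P_L\cap(S^2\setminus B)$, i.e.\ the part \emph{outside} $B$. Two diagrams $(P_L,B,\omega)$ and $(P'_L,B,\omega)$ representing the same tangle therefore agree \emph{inside} $B$ (same $\omega$, same local picture) and differ by Reidemeister moves and isotopies supported \emph{outside} $B$, hence away from $\omega$. Your step (ii) has the moves supported inside $B$ and explicitly contemplates ``sliding an $\omega$-endpoint across a strand or crossing''; that scenario never arises and is precisely what the hypothesis is set up to avoid. This is not a cosmetic slip: the paper's estimates (and the definition of $\pi$) are organised around the moves being disjoint from $\omega$, and there is no argument given here for moves that interact with $\omega$.

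Separately, your step (ii) is missing the actual mechanism. Neither this paper nor \cite{KM_filtrations} handles Reidemeister moves by writing down ``elementary cobordism'' chain maps for each move. The argument is: drop the one or two crossings involved in the move, producing a cube of pseudo-diagram resolutions; show that the instanton complex with the crossing dropped is isomorphic in $\cC_q$ to the original (this is Lemma~\ref{q_drop_crossing}, whose proof requires the extended-cube estimate of Lemma~\ref{q_diffs_extended} and a delicate analysis of the maps $\Psi,\Phi$, the chain homotopy $L$, and the special homotopy $H_{2,-1}$); then observe that after dropping, the two sides of the Reidemeister move are related by a genuine isotopy constant near $\omega$, so Lemma~\ref{q_isotopy} applies. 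Your sketch does not mention the dropped-crossing complexes, the extended $\zz\times\{0,1\}^n$ cube, or any of $\Psi,\Phi,L,H_{2,-1}$; the sentence ``rerunning the dimension/action computation of (i) on each move cobordism'' does not substitute for this, because the relevant maps are not single-cobordism maps but the composites and homotopies arising from the $3$-periodic unrolled complex. Your point (i) is fine in outline and is essentially Lemma~\ref{q_on_cube_diffs}, but by itself it does not yield the proposition.
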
 

From this, we deduce that the isomorphism type of the pages of the spectral sequence corresponding to the $q$ filtration are tangle invariants, and then, comparing the $q$ filtration to the Khovanov picture, we will deduce theorem \ref{e1_tangle_invar}.

We will now show that the differential on the instanton complex has order $\geq 0$ with respect to the $q$ filtration. This is the analogue to Proposition 4.6 in \cite{KM_filtrations}.

\begin{lemma} Consider a cube of pseudo-diagram resolutions for a link projection that comes from one of the following: a full resolution for a projection, dropping all but one crossing, or dropping all but two adjacent, opposite sign crossings. Then the differentials on the corresponding instanton complex have order $\geq 0$ with respect to the $q$ filtration.
\label{q_on_cube_diffs}
\end{lemma}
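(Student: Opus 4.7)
The plan is to substitute the zero-dimensional-moduli-space condition into the definition \eqref{q_defn} of $q$, and then deduce the required inequality from $\kappa \geq 0$ combined with the mod-$8$ congruence in Lemma \ref{pi_p1_mod_8}. A contribution to the differential from $[\beta_1]$ to $[\beta_0]$ is a count of points in a $0$-dimensional moduli space $M_\kappa(S;\beta_1,\beta_0)$ on a cobordism $S$ from $L_v$ to $L_u$ with $v \geq u$; the dimension formula recalled in the previous subsection gives
\[Q(\beta_0) - Q(\beta_1) = 8\kappa + \chi(S) + \tfrac{1}{2} S \cdot S + \dim(G).\]
Substituting into \eqref{q_defn}, using $\sigma(v,u) = S \cdot S$ and writing $\ell = |v|-|u|$ for the length of the path in the cube, I get
\[q(\beta_0) - q(\beta_1) = 8\kappa - S\cdot S - \pi(v,u) + \bigl(\chi(S) + \dim(G) + \ell\bigr).\]

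The first step is then to verify the topological identity $\chi(S) + \dim(G) + \ell = 0$ case by case for the three cube types in the hypothesis. The active parts of $S$ decompose into pairs of pants, twice-punctured $\rrp^2$'s, and the composite pieces appearing in the two-dropped-crossing case (Figure \ref{drop2_maps}); applying the definition of $\dim(G)$ as one less than the number of crossings in the cobordism together with a direct Euler characteristic count of these elementary pieces yields the identity. With this in hand, the problem reduces to showing $8\kappa \geq S \cdot S + \pi(v,u)$.

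For the reduced inequality I combine three ingredients. First, $\kappa \geq 0$ for small perturbations of the ASD equation. Second, the Chern--Simons congruence
\[\kappa \equiv -\tfrac{1}{4} p_1(P_\Delta)[X_\Delta] + \tfrac{1}{16} S \cdot S \pmod{\tfrac{1}{2}}\]
upgrades to $8\kappa \equiv -2 p_1(P_\Delta)[X_\Delta] + \tfrac{1}{2} S\cdot S \pmod 4$. Third, Lemma \ref{pi_p1_mod_8} gives $-2 p_1(P_\Delta)[X_\Delta] \equiv \tfrac{1}{2} \pi(v,u) \pmod{4}$. Together these force
\[8\kappa \equiv \tfrac{1}{2}(S\cdot S + \pi(v,u)) \pmod{4}.\]
The inequality $8\kappa \geq S \cdot S + \pi(v,u)$ then follows from $\kappa \geq 0$ combined with the a priori bound on $|S\cdot S + \pi(v,u)|$ coming from the allowed self-intersection values of the explicit cobordisms (which are at most $\pm 2$ per elementary piece, as computed in subsection \ref{subsubsec_drop_cross_no_omega}) and from the enumeration of $\pi$ values listed after the definition of $\pi$.

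The main obstacle will be the case-by-case verification in the two-dropped-crossing setting (Figure \ref{drop2_maps}), where several elementary pieces can stack up in a single cobordism; there I must check that the combined quantity $S\cdot S + \pi(v,u)$ remains small enough relative to $4$ that the mod-$4$ congruence together with $\kappa \geq 0$ forces genuine non-negativity, rather than merely congruence. The goodness hypothesis on $\omega$ (which ensures $\cP(w_2(P_\Delta)) \equiv 0 \pmod 4$ on the closed orientable pieces, hence controls the Pontrjagin class contributions) is what makes this bookkeeping come out, and will need to be invoked explicitly precisely at those cobordism diagonals where both unresolved crossings meet non-trivial $\omega$-configurations.
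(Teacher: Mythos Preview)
Your topological identity $\chi(S)+\dim(G)+\ell=0$ is wrong, and this breaks the argument. For an edge ($\ell=1$) it is correct, but it does not scale: a length-$\ell$ diagonal has $\chi(S)=-\ell$ (each elementary piece, pair of pants or twice-punctured $\rrp^2$, contributes $-1$) while $\dim(G)=\ell-1$, so $\chi(S)+\dim(G)=-1$ independently of $\ell$ and hence $\chi(S)+\dim(G)+\ell=\ell-1$. Substituting this into your formula gives
\[
q(\beta_0)-q(\beta_1)=8\kappa+(\ell-1)-\sigma(v,u)-\pi(v,u),
\]
which is exactly what the paper obtains. Your reduced inequality $8\kappa\geq\sigma+\pi$ is therefore not the right target, and in fact it is false: in the two-dropped-crossing case one can have $\sigma+\pi=4$ while the mod-$4$ congruence only forces $8\kappa\geq 2$.

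The missing ingredients are twofold. First, the paper invokes constancy of the parity of $q$ (inherited from \cite{KM_filtrations}, unaffected since $\pi$ is even), so that it suffices to prove $q(\beta_0)-q(\beta_1)\geq -1$, i.e.\ $8\kappa+\ell\geq\sigma+\pi$. Second, this last inequality is obtained in two halves: the congruence $8\kappa\equiv\tfrac{\sigma+\pi}{2}\pmod 4$ together with the enumeration bound $\sigma+\pi\leq 4$ gives $8\kappa\geq\tfrac{\sigma+\pi}{2}$, and a separate case analysis on the elementary cobordisms shows $\ell\geq\tfrac{\sigma+\pi}{2}$ (in particular, the cases with $\sigma+\pi=4$ are checked individually to require $\ell\geq 2$). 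Your sketch has the congruence step but not the parity step, and by losing the $\ell$ term you have no mechanism for the second half; the goodness hypothesis alone will not close this gap.
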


\begin{proof}Note that Lemma 4.4 of \cite{KM_filtrations}, which states that the parity of the $q$ filtration on the instanton complex is constant, still applies; $\pi$ is even and our $q$ filtration differs from theirs by $\pi(v,o)$.

For an ASD connection with value $\beta_0$ at $u$ and $\beta_1$ at $v$, we have that if there is a map $\beta_1$ to $\beta_u$, on $S_{vu}$, then
\[q(\beta_0)-q(\beta_1) = Q(\beta_0)-Q(\beta_1)- \sum u+ \sum v -\frac{3}{2}\sigma(v,u)-\pi(v,u)\]
\[=8 \kappa +\frac{1}{2} S \cdot S-1 -\sum u+\sum v-\frac{3}{2}\sigma(v,u)-\pi(v,u)\]
\[=8 \kappa -1 -\sum u+\sum v-\sigma(v,u)-\pi(v,u).\]
The second equality above is from equation (6) in \cite{KM_filtrations}, which states that 
\[\dim(M_\kappa(S; \beta_1, \beta_0)) = 8 \kappa + \chi(S)+\frac{1}{2} S \cdot S + Q(\beta_1)-Q(\beta_0),\]
where $M(S; \beta_1, \beta_0)$ is the moduli space of instantons on $S$ from $\beta_1$ to $\beta_0$ and $M_\kappa(S; \beta_1, \beta_0)$ is the part with action $\kappa$.

From the fact that the parity of $q$ is constant, we see that it suffices to show $q(\beta_0)-q(\beta_1) \geq -1$. In other words, it suffices to show that for $v \geq u$
\[8 \kappa -\sum u+\sum v-\sigma(v,u)-\pi(v,u) \geq 0.\]

If there are no dropped crossings, then $\pi$ and $\sigma$ vanish, and the above statement follows from the non-negativity of the action and the fact that the differential in the instanton complex is upper triangular, that is, the maps vanish unless $v \geq u$.

Note that these are the possible values of $\pi(v)$ for the one dropped crossing case:
\begin{itemize}
\item The dropped crossing is negative: $\pi(v)=0,2$
\item The dropped crossing is positive: $\pi(v) = -2, 0$.  
\end{itemize} 
Thus, for one dropped crossing, the possible values of $\pi(v,u)$ for a cobordism $S$ from $v$ to $u$ are:
\begin{itemize}
\item If $S \cdot S = -2$, then $\pi(v,u)=0,2,4$
\item If $S \cdot S = 0$, then $\pi(v,u)=-2,0,2$
\item If $S \cdot S = 2$, then $\pi(v,u)=-4,-2,0$,
\end{itemize}
because $S \cdot S = -2$ if the cobordism goes from a diagram with a positive dropped crossing to one with a negative dropped crossing, $S \cdot S = 0$ for positive to positive or negative to negative, and $S \cdot S = 2$ for negative to positive. 

For the two dropped crossing picture, the possible values of $\pi(v)$ are:
\begin{itemize}
\item Left hand side of Figure \ref{drop2_maps}, ie, both crossings negative: $\pi(v)=0,2,4$
\item Middle column of Figure \ref{drop2_maps}, ie, one negative, one positive crossing: $\pi(v)=-2,0,2$
\item Right hand side of Figure \ref{drop2_maps}, ie, both crossings positive: $\pi(v)=-4,-2,0$
\end{itemize}

Note that these are the possible values of $\pi(v,u)$:
\begin{itemize}
\item If $S \cdot S = -4$, then $\pi(v,u)=0,2,\ldots 8$.
\item If $S \cdot S = -2$, then $\pi(v,u)=-2,0,\ldots 6$.
\item If $S \cdot S = 0$, then $\pi(v,u)=-4,-2,\ldots 4$.
\item If $S \cdot S = +2$, then $\pi(v,u)=-6,-4, \ldots 2$.
\item If $S \cdot S = +4$, then $\pi(v,u)=-8,-6,\ldots 0$.
\end{itemize}

For the differentials on the cube, $v \geq u$, and $8 \kappa \geq 0$, so if $\sigma + \pi \leq 0$, we are done. Thus, we may assume that $\sigma +\pi > 0$. Recall that by Lemma \ref{pi_p1_mod_8}, $\pi(v,u) \equiv -4p_1(P_\Delta)[X_\Delta]$ mod 8. By Proposition 2.7 of \cite{KM_unknot}, $\kappa = -\frac{1}{4} p_1(P_\Delta)[X_\Delta] + \frac{1}{16} S \cdot S$. Hence, $8 \kappa \equiv \frac{\sigma+\pi}{2} \pmod 4$, and it is easy to see in the above cases that $\sigma+\pi \leq 4$, so $\frac{\sigma+\pi}{2} \leq 2$. Then, if $8 \kappa < \frac{\sigma+\pi}{2}$, by the mod 4 computation, we would have to have $8 \kappa \leq \frac{\sigma+\pi}{2}-4<0$, a contradiction. Thus, $8 \kappa \geq \frac{\sigma+\pi}{2}$. 

It therefore suffices to show that $\frac{\sigma+\pi}{2}  \leq \sum v- \sum u$. Note that if $v =u$, then $\sigma=\pi=0$, and the inequality is true. So we may assume that $\sum v- \sum u \geq 1$. But $\frac{\sigma+\pi}{2}$ takes values $0,1,2$, so we only need to show that for $\frac{\sigma+\pi}{2}=2$, we have $\sum v- \sum u \geq  2$.

We do this by going through the cases. If $|S \cdot S|=4$, then obviously you need to take at least two steps.

If $S \cdot S =  2$, then to have $\sigma+\pi = 4$, we must have $\pi=2$, but if $\sum v - \sum u = 1$, then the map is an $\rrp^2$ and $S \cdot S = 2$ means that it is specifically an $\rrp^2_+$, so $\pi \equiv - 4 p_1(P_\Delta)[X_\Delta] \equiv -2 \pmod 8$, a contradiction.

Similarly, if $S \cdot S =  -2$, then to have $\sigma+\pi = 4$, we must have $\pi=6$, but if $\sum v - \sum u = 1$, then the map is an $\rrp^2$ and $S \cdot S = -2$ means that it is an $\rrp^2_-$, so $\pi \equiv - 4 p_1(P_\Delta)[X_\Delta] \equiv 2 \pmod 8$, a contradiction.

Finally if $S \cdot S =  0$, then to have $\sigma+\pi = 4$, we must have $\pi=4$, but if $\sum v - \sum u = 1$, then the map is a pair of pants, so $\pi \equiv 0 \pmod 8$.

\end{proof}

Our approach to proving Proposition \ref{C_q_tangle_invar} will be to show invariance of Reidermeister moves performed away from a ball containing $\omega$, by way of showing that dropping two adjacent, opposite sign crossings does not affect the isomorphism type in $\cC_q$ and then performing isotopies between different projections with crossings dropped. 

Observe that isotopies preserve the isomorphism type in $\cC_q$, as in the following lemma, which is analogous to Proposition 5.1 in \cite{KM_filtrations} and has the same proof, namely by considering maps $T_{vu}$ for $v \geq u$ coming from counting instantons on the trace of the isotopy from $L$ to $L'$ and showing that the chain maps and homotopies preserve the $q$ grading, as in the previous lemma.

\begin{lemma} Let $(L,N, \omega)$ and $(L',N, \omega)$ be pseudo-diagram resolutions with either no crossings dropped, one crossing dropped, or two adjacent, opposite-sign crossings dropped, and suppose that $L$ and $L'$ are isotopic via an isotopy that is constant around $N$ and $\omega$. Then $C(P_L,N,\omega)$ and $C(P_{L'},N,\omega)$ are isomorphic as elements of $\cC_q$, up to overall shift in $q$.
\label{q_isotopy}
\end{lemma}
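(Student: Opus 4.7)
The plan is to follow the proof of Proposition 5.1 of \cite{KM_filtrations}, modified to account for the $\pi$ term in our new $q$ filtration. The key observation is that since the isotopy is constant near $N$ and near $\omega$, its trace in $S^3 \times [0,1]$ is a collection of cylinders in a tubular neighbourhood of $N \cup \omega$, so the singular bundle data $P_\Delta$ extends there as a product and contributes nothing to $p_1(P_\Delta)[X_\Delta]$ over the trace region.

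First I would construct, for each pair of vertices $v \geq u$ of the cube, a cobordism $S_{vu} \subset S^3 \times [0,1]$ by concatenating the trace of the isotopy with the cube cobordism for the map $v \to u$, and then define $T_{vu} \colon C(P_L,N,\omega)_v \to C(P_{L'},N,\omega)_u$ by counting zero-dimensional moduli spaces of ASD connections on $S_{vu}$ with the extended singular bundle data. As in \cite{KM_filtrations}, the collection $T = (T_{vu})$ should assemble into a chain map $C(P_L,N,\omega) \to C(P_{L'},N,\omega)$.

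The crucial step, and the only real point of departure from \cite{KM_filtrations}, is to show that $T$ has order $\geq 0$ in $\cC_q$ with respect to the modified $q$ filtration. The dimension formula yields
\[q(\beta_0) - q(\beta_1) = 8\kappa - 1 - \sum u + \sum v - \sigma(v,u) - \pi(v,u),\]
exactly as in the proof of Lemma \ref{q_on_cube_diffs}. Because the isotopy contributes only cylinders in a neighbourhood of $N \cup \omega$, neither $\sigma(v,u)$ nor the singular bundle data change along the isotopy portion of $S_{vu}$, so Lemma \ref{pi_p1_mod_8} still gives $\pi(v,u) \equiv -4 p_1(P_\Delta)[X_\Delta] \pmod{8}$. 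The same casework as in Lemma \ref{q_on_cube_diffs} --- non-negativity of $\kappa$, constancy of the parity of $q$, and the mod-$4$ relation $8\kappa \equiv (\sigma+\pi)/2$ --- will then give $q(\beta_0) - q(\beta_1) \geq -1$, so $T$ preserves the filtration up to a fixed shift.

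Finally I would build a reverse chain map $T'$ from the reverse isotopy and, following \cite{KM_filtrations}, produce chain homotopies between $T' \circ T$, $T \circ T'$ and the respective identities from instanton counts on the trace of the isotopy composed with its reverse, using the standard fact that the double trace is isotopic to a product. The same $q$-filtration bound applies to $T'$ and to these homotopies, giving an isomorphism in $\cC_q$ up to an overall shift in $q$ coming from the Euler characteristic of the trace surface. The hard part is the casework in the third paragraph: verifying that the mod-$8$ control of $\pi$ genuinely survives on the composite cobordism $S_{vu}$ when the isotopy moves crossings not in $N$. But because the isotopy-trace factor is a product of cylinders near $N$ and $\omega$ and $\pi$ and $\sigma$ are additive under composition of cobordisms, the verification reduces to the cases already handled in Lemmas \ref{q_on_cube_diffs} and \ref{pi_p1_mod_8}.
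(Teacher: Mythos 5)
Your proposal is correct and follows essentially the same route as the paper, which itself only sketches the argument by pointing to Proposition 5.1 of \cite{KM_filtrations}: one defines the maps $T_{vu}$ by counting instantons on the trace of the isotopy and checks, exactly as in Lemma \ref{q_on_cube_diffs}, that the chain maps and homotopies have order $\geq 0$ for the modified $q$ filtration. Your additional observation that the constancy of the isotopy near $N$ and $\omega$ keeps $\sigma$ and $\pi$ under control is precisely the point that makes the transfer of the earlier casework legitimate.
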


We can also extend the complex beyond the cube to $\zz^n$, and it will be useful to note that when we extend the complex beyond the cube in one direction, there is a certain $3$ periodicity. That is, if $K_v$ and $K_u$ are the links corresponding to vertices $v$ and $u$ with $3|v-u \in \zz^n$, then $K_v =K_u$ and the $C_v$ and $C_u$ can be identified via isomorphisms with $H_*(S^2)^{\otimes p}$ where $p$ is the number of components, as in equation (2) of \cite{KM_filtrations}.

The analogue of Lemma 4.3 or \cite{KM_filtrations}, which states that the aforementioned isomorphism between $C_v$ and $C_u$ preserves the $q$ filtration, still holds in our setting, because $\pi(v)$ and $\pi(u)$ are the same, so if $v' = v+(3,0,0\ldots 0)$, then $\sigma(v',u) = \sigma(v,u)+2$ and $\pi(v,u)=\pi(v',u)$.

\begin{defn} Consider the extended complex over $ \zz \times \{0,1\}^n$. Let the type of a cobordism denote $v(0)-u(0)$. 
\end{defn}

\begin{lemma} Consider $(N,N')$ where either $N$ is all the crossings and $N'$ has one dropped crossing, or $N$ has one dropped crossing, and $N'$ has another, adjacent, opposite crossing dropped. Let $n = |N'|$ and index the crossings of $N$ $0,1,\ldots,n$, so that the $0$th one is the distinguished crossing dropped in $N'$. 

The differentials on the instanton complex over $\zz \times \{0,1\}^n$ (where the $-1 \pmod 3$ pages correspond to leaving the $0$th crossing unresolved) of type at most 3 have order $\geq 0$ with respect to the $q$ filtration.
\label{q_diffs_extended}
\end{lemma}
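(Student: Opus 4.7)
The plan is to run essentially the same argument as in Lemma \ref{q_on_cube_diffs}, now applied to the extended complex over $\zz \times \{0,1\}^n$. The dimension formula together with the definition of $q$ still gives
\[q(\beta_0) - q(\beta_1) = 8\kappa - 1 + \sum v - \sum u - \sigma(v,u) - \pi(v,u),\]
where the sums include the contribution from the extended $0$-coordinate, and by constancy of the parity of $q$ it suffices to prove $8\kappa + \sum v - \sum u - \sigma(v,u) - \pi(v,u) \geq 0$. Lemma \ref{pi_p1_mod_8} was already formulated in the $\zz \times \{0,1\}^n$ setting and supplies the congruence $\pi(v,u) \equiv -4 p_1(P_\Delta)[X_\Delta] \pmod{8}$; combined with $\kappa = -\frac{1}{4} p_1(P_\Delta)[X_\Delta] + \frac{1}{16} S \cdot S$ this yields the key mod-$4$ relation $8\kappa \equiv \frac{\sigma+\pi}{2} \pmod{4}$, exactly as before.

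I would then proceed by casework on the type $k = v(0) - u(0) \in \{0,1,2,3\}$. A type-$0$ differential lives inside a single $\{0,1\}^n$ slice, so Lemma \ref{q_on_cube_diffs} applies once one notes that the fixed $0$-coordinate contributes identically to $q(\beta_0)$ and $q(\beta_1)$. A type-$1$ differential corresponds to a single-step cobordism in the $0$-direction ($\rrp^2_\pm$ or a pair of pants, according to whether one end lies on an unresolved page or both ends lie on resolved pages), and the analysis from Lemma \ref{q_on_cube_diffs} then applies without essential modification.

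For types $2$ and $3$ I extend the case analysis. The $0$-direction piece of the cobordism is now a composition of two or three single-step cobordisms, so its self-intersection and the $0$-crossing contribution to $\pi$ take values in discrete but enlarged ranges. The target inequality $\frac{\sigma+\pi}{2} \leq \sum v - \sum u$ still follows: the fact that $v(0) - u(0) = k$ contributes at least $k$ to $\sum v - \sum u$ handles most configurations directly, and whenever $\frac{\sigma+\pi}{2}$ would otherwise exceed this the mod-$8$ obstruction from Lemma \ref{pi_p1_mod_8} rules out the offending combination, exactly as in the final cases of the previous lemma. The hard part will be keeping the enlarged type-$3$ bookkeeping tractable; I expect to track how many of the single-step pieces are $\rrp^2_+$, $\rrp^2_-$, or pair-of-pants, and to separate subcases by the signs of the unresolved crossings, but no new conceptual ingredients beyond those already present in Lemma \ref{q_on_cube_diffs} should be required.
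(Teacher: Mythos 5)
Your proposal follows essentially the same route as the paper: reduce via constancy of the parity of $q$ to the inequality $8\kappa + \sum v - \sum u - \sigma(v,u) - \pi(v,u) \geq 0$, use Lemma \ref{pi_p1_mod_8} and $\kappa = -\frac{1}{4}p_1(P_\Delta)[X_\Delta] + \frac{1}{16}S\cdot S$ to get $8\kappa \geq \frac{\sigma+\pi}{2}$, split the cobordism into its vertical ($\zz$) and horizontal ($\{0,1\}^n$) parts, and then verify $\frac{\sigma+\pi}{2} \leq \sum v - \sum u$ by casework on the type. The only cosmetic difference is that for types $2$ and $3$ the paper shows the vertical contributions are actually \emph{restricted} (via the $3$-periodicity identities $\sigma(v',v)+\sigma(v,v'')=2$ and $\pi(v',v)+\pi(v,v'')=0$, forcing $\sigma_v \in \{0,2\}$ with $\sigma_v+\pi_v\leq 2$, and $\sigma_v=2$, $\pi_v=0$ for type $3$) rather than ``enlarged,'' but your fallback of comparing $\frac{\sigma+\pi}{2}$ against the contribution of at least $k$ to $\sum v - \sum u$ closes those cases as well.
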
 
\begin{proof} The proof of this is similar to the proof of Lemma \ref{q_on_cube_diffs}: First note that the parity of $q$ on the extended complex is still constant, by the same argument as before. Thus, it again suffices to show $8 \kappa -\sum u+\sum v-\sigma(v,u)-\pi(v,u) \geq 0$. This is again clear for $\sigma+\pi \leq 0$, so we may assume $\sigma+\pi >0$.

Let the vertical part of a map on the cube be the $\zz$ part and the horizontal part by the part on $\{0,1\}^n$. Then for the horizontal part, there is at most one crossing, and we have, per the above chart that $\sigma_h+\pi_h  \in \{-2,0,2\}$. For the vertical part, for a map of Type $\leq 3$, we have $|\sigma_v| \leq 2$ and $|\pi_v| \leq 2$, so we still have $\sigma+\pi \leq 6$. Thus, for $8 \kappa  \geq 0$ and $8 \kappa \equiv \frac{\sigma+\pi}{2} \pmod{4}$, we still have $8 \kappa \geq \frac{\sigma+\pi}{2}$.

It again suffices to show that $\sigma+\pi \leq 2 (\sum v- \sum u)$. For maps of Type 1, the chart in the proof of Lemma \ref{q_on_cube_diffs} still holds: to see this we need to understand the vertical cobordisms between the resolution where one of the crossings is dropped and the resolutions where it isn't.

In the case of pairs of pants, $\sigma$ and $\pi$ are both $0$. Otherwise, the cobordism is between a loop and that loop with one extra crossing, say of sign $c$. As in all of the cases on the outer rim of Figure \ref{drop2_maps}, if there is an extra crossing in the picture, we can ignore it when analysing the cobordism up to isotopy. The cobordism is between a loop and that loop with one extra crossing, say of sign $c$ is isotopic to that between a loop of sign $-c$ and a loop with sign $c$. Thus, for this case we also have $\pi = 0$ or $\pi = -\sigma$, so the values $\pi$ takes, indexed by $\sigma$, are still as in the chart in the proof of Lemma \ref{q_on_cube_diffs}.

Moreover, then the same proof applies to show $\sigma+\pi \leq 2 (\sum v- \sum u)$.

For a map of Type 2, let us find the possible values of $\sigma +\pi$ for the vertical part ie, the cobordism from $v'=v+(2,0,\ldots 0)$ to $v$. Consider $v'' = v-(1,0,\ldots 0)$. Then $\sigma(v',v) + \sigma(v,v'') = 2$ and $\pi(v',v)+\pi(v,v'')=0$. Moreover $(v,v'')$ is a Type 1 vertical cobordism, so the possibilities for $\sigma(v,v'')$ are $2,0,-2$, and the possibilities for $\pi(v,v'')$ are $-\sigma(v,v''),0$. 

The way the resolution works, for the map $(v,v'')$ which, we recall, goes between a loop and a loop with an extra crossing, the map has to go from either no crossing to positive crossing, or from negative crossing to no crossing. (Unless it is a pair of pants.) Thus, $\sigma(v,v'')$ can be $0$ or $2$, with $\pi(v,v'')=0$ if $\sigma(v,v'')=0$, and $\pi(v,v'') \in \{0,-2\}$ if $\sigma(v,v'')=2$.

Thus $\sigma(v',v)$ can be $2$ or $0$, with $\pi(v',v)=0$ if $\sigma(v',v)=2$ and $\pi(v',v)\in \{2,0\}$ if $\sigma(v',v)=0$. Either way, $\sigma(v',v)$ takes values $0,2$. From here, the same argument as in Lemma \ref{q_on_cube_diffs} works.

Finally, in the case of Type 3, we have $\sigma = 2$ and $\pi = 0$, and the same proof holds.

\end{proof}

Let $P_L$ be a link projection and $N$ be either the set of all crossings or the set of all but one crossing. Let $N' \subset N$ be obtained by dropping one crossing; in the case that $N$ already has a dropped crossing, we further require that the second dropped crossing be adjacent to the first with opposite sign. We will call the pair $(N,N')$ of sets of crossings ``okay'' if it is one of the aforementioned two situations.

Recall from \cite{KM_unknot} that in this situation, the instanton complexes $C(P_L,N)$ and $C(P_L,N')$ are quasi-isomorphic. Let us describe this quasi-isomorphism.

Let $c_* \in N$ denote that crossing that is dropped in $N'$. Note that we can decompose $C(P_L,N)$ into the two parts based on the resolution of $c_*$, as $C(P_L,N) = C_1 \oplus C_0$. We may then consider the complex $\oplus_{i \in \zz} C_i$. Then $C(P_L,N')$ is isomorphic to $C_{-1}$, and $C_{-1}$ is homotopic to $C(P_L,N) = C_1 \oplus C_0$ via maps
\[\Psi = [F_{1,-1},F_{0,-1}]:C_1 \oplus C_0 \to C_{-1}\]
and
\[\Phi_2 = [F_{2,1},F_{2,0}]: C_2 \to C_1 \oplus C_0\]
\[\Phi_{-1} = [F_{-1,-2},F_{-1,-3}]: C_{-1} \to C_{-2} \oplus C_{-3}\]
where $C_2 \simeq C_{-1}$, and the $F_{ij}$ are the maps on the instanton complex $C_i$ to $C_j$.

For the composite $\Psi \circ \Phi_{2}: C_2 \to C_{-1}$, it is shown in \cite{KM_unknot} that
\[F_{2,-1} F_{2,2} + F_{-1,-1} F_{2,-1}  + \Psi \circ \Phi = T_{2,-1} + N_{2,-1}\]
where $T_{2,-1}$ is an isomorphism coming from cylindrical cobordisms and $N_{2,-1}$ is chain homotopic to zero via a map $H_{2,-1}$ which we will describe in more detail in the proof of the following lemma.

The other composite $\Phi_{-1} \circ \Psi$ is shown in \cite{KM_filtrations} to be homotopic via chain homotopy 
\[L = \mattwo{F_{1,-2}}{F_{0,-2}}{F_{1,-3}}{F_{0,-3}}\]
to a map 
\[\mattwo{T_{1,-2}+N_{1,-2}}{0}{Y}{T_{0,-3}+ N_{0,-3}} : C_1 \oplus C_0 \to C_{-2} \oplus C_{-3},\]
which is in turn homotopic via chain homotopy 
\[\mattwo{H_{1,-2}}{0}{0}{H_{0,-3}},\]
to a map
\[\mattwo{T_{1,-2}}{0}{X}{T_{0,-3}} : C_1 \oplus C_0 \to C_{-2} \oplus C_{-3},\] for a map $X:C_0 \to C_{-2}$.

\begin{lemma} If $P_L$ is a link projection and $(N,N')$ is okay, then the instanton complexes for $(P_L,N)$ and $(P_L,N')$ are isomorphic in $\cC_q$, up to an overall shift in the $q$ filtration.
\label{q_drop_crossing}
\end{lemma}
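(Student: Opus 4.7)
The plan is to verify that every map and every chain homotopy appearing in the chain-equivalence between $C(P_L,N)$ and $C(P_L,N') \cong C_{-1}$ from \cite{KM_unknot} has order $\geq 0$ with respect to the $q$ filtration; once that is established, the entire equivalence descends into $\cC_q$ and gives the claimed isomorphism up to an overall shift.

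I would first identify each ingredient of the quasi-isomorphism with a cobordism of bounded Type in the extended complex over $\zz \times \{0,1\}^n$. The maps $\Psi = [F_{1,-1}, F_{0,-1}]$, $\Phi_2 = [F_{2,1}, F_{2,0}]$, and $\Phi_{-1} = [F_{-1,-2}, F_{-1,-3}]$ each have components of Types $1$ and $2$. The chain homotopy $H_{2,-1}$ witnessing the formula for $\Psi \circ \Phi_2$ is of Type $3$; the chain homotopy $L = \mattwo{F_{1,-2}}{F_{0,-2}}{F_{1,-3}}{F_{0,-3}}$ has components of Types $2$ and $3$; and $H_{1,-2}$, $H_{0,-3}$ are of Type $3$. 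The translation maps $T_{2,-1}$, $T_{1,-2}$, $T_{0,-3}$ arise from cylindrical cobordisms and preserve $q$ up to the global mod-$3$ shift discussed after Lemma \ref{q_isotopy}.

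I would next invoke Lemma \ref{q_diffs_extended}, which asserts precisely that cobordism maps of Type at most $3$ in the extended complex have order $\geq 0$ with respect to $q$. This immediately applies to each of the maps and homotopies enumerated above. Combined with the shift-preservation property of the translation maps, the assembled chain-homotopy equivalence between $C_1 \oplus C_0 = C(P_L,N)$ and $C_{-1} \cong C(P_L,N')$ promotes to an isomorphism in $\cC_q$ up to a single global shift in the $q$ filtration.

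The main obstacle is the bookkeeping: one must confirm that no auxiliary ingredient in the \cite{KM_unknot} construction of the chain equivalence has Type exceeding $3$, since Lemma \ref{q_diffs_extended} covers only that range; one also needs to check that the global shifts accumulated through $\Psi$, $\Phi_{-1}$, and the translations are consistent with the mod-$3$ periodicity, so that they combine into a single overall shift rather than several incompatible ones. Once these verifications are complete, all the substantive analytic content—the non-negativity of the action together with the mod-$4$ identity relating $8\kappa$ and $\tfrac{1}{2}(\sigma+\pi)$—is already packaged inside Lemma \ref{q_diffs_extended}, so no further moduli-theoretic work is required.
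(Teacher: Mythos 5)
Your overall strategy coincides with the paper's: reduce the lemma to checking that every map and homotopy in the Kronheimer--Mrowka chain equivalence has $q$-order $\geq 0$, handle $\Psi$, $\Phi$, and $L$ via Lemma \ref{q_diffs_extended}, and use Lemma \ref{q_isotopy} to see that the triangular map $\mattwo{T_{1,-2}}{0}{X}{T_{0,-3}}$ is an isomorphism in $\cC_q$. That much is right.

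There is, however, a genuine gap in your treatment of the null-homotopies $H_{2,-1}$, $H_{1,-2}$, $H_{0,-3}$. These are \emph{not} components of the differential on the extended complex over $\zz\times\{0,1\}^n$, so Lemma \ref{q_diffs_extended} does not apply to them. They are defined (pp.\ 106--107 of \cite{KM_unknot}) by counting instantons on a \emph{modified} pair $(\ol{W},\ol{S})$ obtained by excising the plumbing of two M\"obius bands around the dropped crossing and gluing back in a pair of disks, over a family of metrics of dimension $|v-u|_1+1$ rather than the usual one. Consequently $\chi(\ol{S})+\dim(G)=+1$ instead of $-1$, the self-intersection entering the index formula is $\ol{S}\cdot\ol{S}$ rather than $S_{(2,v)(-1,u)}\cdot S_{(2,v)(-1,u)}$, and the resulting expression for $q(\beta_0)-q(\beta_1)$ is different from the one bounded in Lemma \ref{q_diffs_extended}. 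The paper carries out this separate computation explicitly, arriving at $q(\beta_0)-q(\beta_1)=8\kappa+1-\sum u+\sum v-\ol{S}\cdot\ol{S}-\pi(v,u)$, and only then observes that the bound is in fact better than the one for the genuine differentials. Your proposal skips this step by misclassifying the $H$'s as ``Type $3$'' cobordism maps of the complex; to complete the argument you need to reproduce (or cite) the index computation for the excised cobordism and re-verify non-negativity there.
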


\begin{proof} We would like to show that the morphisms $\Psi$ and $\Phi$ as well as all the homotopies in the above discussion have order $\geq 0$ with respect to the $q$ filtration. Note that $\Phi$, $\Psi$, and the chain homotopy $L$ all come from differentials of type at most three on the chain complex, which are shown in Lemma \ref{q_diffs_extended} to have order $\geq 0$.

In the case of the map $H_{2,-1}$, we write down the map, from pages 106-107 of \cite{KM_unknot}. For $C_{2} \to C_{-1}$ This map works like this: Consider $(\ol{W},\ol{S})$, which is obtained from $(W,S)$ by removing $(B_{2,-1},M_{2,-1})$ where $B_{03}$ contains basically the three handles of the $0$th crossing, and $M_{2,-1}$ is the plumbing of two M\"obius bands. The boundary of $M_{2,-1}$ is a two component unlink. Attach back in $(B_{03}, D^2 \cup D^2)$, the two disks.

For going from $(2,v)$ to $(-1,u)$, the cobordism is now $S_{(2,v)(2,u)}$, ie what it would be if we removed the part corresponding to the additionally dropped crossing.

Consider the family of metrics where you move the crossings $1,\ldots n$ back and forth, and also stretch along the boundary of $B_{03}$, and also don't quotient anything. The dimension of this family is $|v-u|_1+1$, and $\chi(\ol{S})=-|v-u|_1$, so 
\[Q(\beta_0)-Q(\beta_1)=8 \kappa+\chi(\ol{S})+\frac{1}{2}{\ol{S} \cdot \ol{S}}+\dim(G) = 
8 \kappa+\frac{1}{2}{\ol{S} \cdot \ol{S}}+1\]
and
\[q(\beta_0)-q(\beta_1)=Q(\beta_0)-Q(\beta_1)- \sum u+ \sum v +3 -\frac{3}{2}S_{(2,v)(-1,u)}\cdot S_{(2,v)(-1,u)}-\pi(v,u)\]
where $\pi(v,u)$ is three periodic.
This
\[=8 \kappa+\frac{1}{2}{\ol{S} \cdot \ol{S}}+1- \sum u+ \sum v +3 -\frac{3}{2}S_{(2,v)(-1,u)}\cdot S_{(2,v)(-1,u)}-\pi(v,u)\]
\[=8 \kappa+\frac{1}{2}{\ol{S} \cdot \ol{S}}+1- \sum u+ \sum v  -\frac{3}{2}\ol{S} \cdot \ol{S}-\pi(v,u)\]
\[=8 \kappa+1- \sum u+ \sum v  -\ol{S} \cdot \ol{S}-\pi(v,u)\]
But this is better than what we had before. Thus $H_{2,-1}$ also has order $\geq 0$.

Thus we have that $\Phi$ and $\Psi$ are morphisms in $\cC_q$, and in this category,
\[\Phi \circ \Psi_2 \simeq T_{2,-1} \text{, and }\]
\[\Phi_{-1} \circ \Psi \simeq \mattwo{T_{1,-2}}{0}{X}{T_{0,-3}},\]
and by lemma \ref{q_isotopy}, the maps $T_{1,-2}$ and $T_{0,-3}$, which correspond to isotopies, are isomorphisms in $C_q$, so $\mattwo{T_{1,-2}}{0}{X}{T_{0,-3}}$ is also an isomorphism in $\cC_q$. 

This shows that $C(P_L,N) \simeq C_{-1}$ in $\cC_q$. However, $C_{-1}$ and $C(P_L,N')$ represent the same complex, up to a constant shift in $q$. Thus, $C(P_L,N)$ and $C(P_L,N')$ are isomorphic in $\cC_q$, as desired.
\end{proof}

At this point, we can prove Proposition \ref{C_q_tangle_invar}.

\begin{proof}[Proof of Proposition \ref{C_q_tangle_invar}] We would like to show that the $\cC_q$ type is a tangle invariant. For this, it suffices to show that Reidermeister moves performed away from $\omega$ preserve the isomorphism type of the complex in $\cC_q$. This follows the proof of Proposition 8.1 of \cite{KM_filtrations}:

We compare the complexes $C$ and $C'$, obtained from cubes of resolutions corresponding to projections $P$ and $P'$ of a link that differ by a Reidermeister move performed away from $\omega$. Consider the complexes $C''$ and $C'''$ arising from the cube of pseudodiagram resolutions obtained by dropping the one or two relevant crossing in $C$ and $C'$ respectively.

Aa consequence of Lemma \ref{q_drop_crossing}, $C''$ has the same $\cC_q$ type as $C$ and $C'''$ has the same $\cC_q$ type as $C'$, and by Lemma \ref{q_isotopy}, $C''$ and $C'''$ have the same $\cC_q$ type, completing the proof.
\end{proof}

Using Proposition \ref{C_q_tangle_invar}, we now deduce Theorem \ref{e1_tangle_invar}.


\begin{proof}[Proof of Theorem \ref{e1_tangle_invar}] Theorem 3.5 in Chapter XI of \cite{Maclane} states that homotopy equivalences of order $\geq t$ induce isomorphisms of the $E^r$ pages of the spectral sequences for $r >t$. Moreover by Proposition \ref{C_q_tangle_invar}, if $P_L$ and $P_L'$ represent the same tangle, then there they are isomorphic in $\cC_q$, which means there is a homotopy equivalence between the (up to overall shift in $q$). Combining these two results, we see that the $(E_1,d_1)$ page of the instanton complex filtered by the $q$-filtration, up to overall shift in the $q$ filtration, is a tangle invariant.

By the definition of the spectral sequence, as in \cite{Maclane}, it is easy to see that the isomorphism type of the $E^1$ page is the same as the homology of the instanton complex with the differential replaced with the $\Delta_q = 0$ part of the differential, i.e., the part of the differential that changes the $q$ grading by $0$. Indeed, $E^1_p = H(F_pA/F_{p+1}A)$, where we have adjusted Theorem 3.5 in Chapter XI of \cite{Maclane}, because we are considering descending rather than ascending filtrations.

Unpacking the definition for $E^1_p$ of a filtered complex, as in the definition given in Theorem 3.1 of Chapter XI of \cite{Maclane}, $E^1_p$ is the homology of the part of $q$-grading $p$ with the $q$-grading $0$ part of the differential.

In the fully resolved picture, $\pi$ and $\sigma$ both vanish, and if $\beta_0$ is at vertex $u$ and $\beta_1$ at vertex $v$ and the coefficient of $\beta_0$ in $d\beta_1$ does not vanish, then
\[q(\beta_0)-q(\beta_1) = 8 \kappa -1-\sum u+\sum v\]
Thus, this piece of the differential has $q$-order $0$ if and only if
\[\sum v- \sum u = 1 - 8\kappa.\]
However, $8\kappa$ is non-negative, so this implies that the map is part of an edge map.

It now suffices to show that the edge maps all have $\Delta_q = 0$. However, the edge maps are calculated in Lemma 8.7 of \cite{KM_unknot}, and it is easy to see that these have $\Delta_Q = -1$, so that $\kappa=0$ and $\Delta_q = 0$, as desired.

\end{proof}

\section{Spectral sequence collapse}

In the previous section, we defined a complex, $C(P_L,\partial \omega)$, for alternating link projections, and we showed that its homology was an invariant of $(L,\partial \omega)$, and indeed independent of $\omega$.

In \cite{KM_unknot}, Kronheimer and Mrowka exhibited a spectral sequence for $(L, \emptyset)$ whose $(E_1, d_1)$ page is the Khovanov complex which abuts to $I^\#(L,\emptyset)$. 

They did this by exhibiting a spectral sequence for a link $L$ with $\omega$ whose $E_1$ term is 
\[\oplus_{v' \in \{0,1\}^N} I^\omega_*(Y,L_{v'})\]
which abuts to $I_*^\omega(Y, L_{w})$ where $w = (2,2, \ldots 2)$, so that $L_{v'}$ goes through the $0$ and $1$ resolutions of a link $L$, and $L_{w}$ is the unresolved link. They then showed that for unlinks $L_{v'}$ with $n$ components, in the situation where $\omega$ is empty, $I^{\#}(Y,L_{v'})$ is the group $A^{\otimes n}$, and that the maps $d_1$ agree with those in the Khovanov complex.

It is easy to see that for general $\omega$ and $L_{v'}$ an unlink, $I^{\#}(Y,L_{v'}, \omega)$ agrees with $C(P_L,\partial \omega)$ with $d_1$ also agreeing with the differential of $C(L,\omega)$. 

This leads us to the following theorem:

\begin{thm} For an alternating link projection $P_L$ with singular bundle data $\omega$, there is a spectral sequence whose $(E_1,d_1)$ term is $C(P_L,\partial \omega)$, which abuts to $I^\#(Y,L,\omega)$. 
\end{thm}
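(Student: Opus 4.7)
The plan is to construct the desired spectral sequence by adapting verbatim the cube-of-resolutions construction of Kronheimer and Mrowka from \cite{KM_unknot} to the setting where the singular bundle data $\omega$ on $L$ is nontrivial, and then to identify the $E_1$ page with $C(P_L,\partial\omega)$ vertex by vertex and edge by edge. First I would set up the cube: fix the $N$ crossings of $P_L$ and, for each $v\in\{0,1,2\}^N$, let $L_v\subset Y$ be the link obtained by resolving, merging, or leaving alone each crossing, with $\omega$ extended continuously through the cobordisms between resolutions (as in \cite{KM_unknot}, this extends the singular bundle data $\omega$ coherently). The filtration by cube grading gives a spectral sequence whose $E_1$ page is $\bigoplus_{v\in\{0,1\}^N} I^{\omega}(Y, L_v, \omega)$ and which abuts to $I^\omega(Y, L_w, \omega) = I^\#(Y, L, \omega)$ where $w=(2,\dots,2)$; the construction of this spectral sequence goes through with no change from the $\omega=\emptyset$ case because everything in KM's argument (unobstructed gluing along product ends, the dimension formula, the filtered differential) is local near the crossings and does not interact with $\omega$.

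Next, I would identify the $E_1$ term with $C(P_L,\partial\omega)$ at each vertex. At a vertex $v$, $L_v$ is an unlink with components $L_v = \bigsqcup_{i=1}^{c} K_i$, each carrying some number of endpoints of $\omega$. When every $K_i$ has an even number of endpoints, each arc of $\omega$ can be pushed off $L_v$ into a closed loop disjoint from $L_v$, and a standard excision/neck-stretching argument identifies $I^\omega(Y, L_v, \omega)$ with $I^\#(Y, L_v, \emptyset)$, which by \cite{KM_unknot} is $A^{\otimes c}$. When some component $K_i$ has an odd number of $\omega$-endpoints, one checks that there is no flat $SO(3)$ connection on the complement with the prescribed holonomy: the meridian of $K_i$ must lift to $\pm 1\in SU(2)$, but the product of an odd number of $\omega$-meridians around $K_i$ equals $-1$, forcing $\pm 1 = -1 \cdot (\pm 1)$, a contradiction. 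Thus the chain complex computing $I^\omega(Y,L_v,\omega)$ is zero, matching the prescription in $C(P_L,\partial\omega)$ that kills such vertices.

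Finally I would identify the $d_1$ differentials. Along an edge of the cube, the cobordism is a pair of pants glued to cylinders, and the induced map $I^\omega(Y, L_v,\omega)\to I^\omega(Y,L_{v'},\omega)$ is computed as in Section 8 of \cite{KM_unknot}: when neither endpoint is killed, one reduces locally (via excision of a neighbourhood away from $\omega$) to the $\omega=\emptyset$ case, recovering exactly the Frobenius merge $m$ or comultiplication $\Delta$ on $A$. When the source or target vertex is killed, the map is automatically zero because one of the groups is zero. These are precisely the differentials defining $C(P_L,\partial\omega)$.

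The main obstacle is the identification of $I^\omega(Y,L_v,\omega)$ with $A^{\otimes c}$ (respectively with $0$) when $\omega$ genuinely wraps through $L_v$: one must verify that the excision/isotopy argument identifying $\omega$ with a disjoint closed loop does not alter the instanton chain complex up to canonical isomorphism, and that this isomorphism is natural enough to commute with the pair-of-pants cobordism maps appearing in $d_1$. Once this naturality is established, the identification of the $E_1$ page as a chain complex with $C(P_L,\partial\omega)$ is immediate from the Khovanov-like computation of the edge maps in \cite{KM_unknot}.
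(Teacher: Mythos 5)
Your proposal follows essentially the same route as the paper: the spectral sequence for general $\omega$ is already supplied by \cite{KM_unknot}, and the only remaining content is the identification of the $(E_1,d_1)$ page with $C(P_L,\partial\omega)$ for unlinks, which the paper likewise treats as an ``easy to see'' computation and which you flesh out in the same way (excision to $A^{\otimes c}$ when all parities are even, vanishing otherwise, merge/split edge maps). One small correction to your vanishing argument: the obstruction when a component $K_i$ carries an odd number of endpoints of $\omega$ comes from the longitude of $K_i$ --- which bounds a disk in the complement of the unlink meeting $\omega$ an odd number of times, so the bundle restricted to the boundary torus has nonzero $w_2$, forcing anticommuting $SU(2)$ lifts of meridian and longitude while the longitude's $SO(3)$-holonomy must be trivial --- not from the meridian of $K_i$ lifting to $\pm 1$, since meridians of $L$ have traceless (order-two) holonomy in this theory.
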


In \cite{KM_unknot}, Kronheimer and Mrowka also showed that for $K$ an alternating knot, the spectral sequence from Khovanov homology to instanton homology collapses on the $E_2$ page. This means that the Khovanov homology and the instanton homology have the same rank for a alternating knot projection $P_K$. By corollary \ref{indep_slide_omega}, for an alternating knot projection $P_K$ with $\omega$, the homology of $C(P_K, \omega)$ is the same as the Khovanov homology of $K$. This implies the following:

\begin{lemma}[Corollary 1.6 from \cite{KM_unknot}] For an alternating knot projection $K$ with marking data, the spectral sequence from $C(P_K, \omega)$ to the instanton homology collapses on the $E_2$ page.
\label{arc_knot_spec_collapse}
\end{lemma}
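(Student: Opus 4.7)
The plan is to leverage Corollary~\ref{indep_omega} to identify the $E_2$ page of the marked spectral sequence with the unmarked Khovanov homology, and then transfer Kronheimer and Mrowka's original collapse argument to the marked setting.

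Since $K$ is a knot it is, in particular, a non-split alternating link, so Corollary~\ref{indep_omega} gives $H(P_K,\partial\omega) \cong \Kh(K)$. The $E_2$ page of the spectral sequence from $C(P_K,\partial\omega)$ to $I^\#(K,\omega)$ is therefore isomorphic to $\Kh(K)$, which for an alternating knot is $(h,q)$-thin, i.e.\ supported on a single diagonal of the bigrading.

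Kronheimer and Mrowka's Corollary 1.6 of \cite{KM_unknot} deduces collapse of the unmarked spectral sequence from exactly this thinness by a parity argument: the higher differentials $d_r$ for $r \geq 2$ carry bigrading shifts, determined by the index formula for instanton moduli spaces, that are incompatible with the single-diagonal support of $\Kh(K)$. Because the $E_2$ page in our marked setting is the same thin complex, and because the bigrading shift of $d_r$ is governed by the same index theory (modified only by the $\pi$-correction of Section~\ref{q_section} when $\omega$ is present), the same parity obstruction should force $d_r = 0$ for all $r \geq 2$.

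The main point requiring attention is that the $\omega$-correction to the $q$-grading does not break this parity argument. Lemma~\ref{pi_p1_mod_8} says that the $\pi$-correction matches the contribution of $p_1(P_\Delta)$ to the action modulo $8$, which is precisely what is needed to ensure that the bidegree of $d_r$ on the thin $E_2$ page remains off-diagonal after accounting for $\omega$. This is the one step of real content, but it is largely handled by the filtration analysis of the previous section.
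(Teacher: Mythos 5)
Your first step---identifying the $E_2$ page with the ordinary Khovanov homology of $K$---matches the paper's setup (the paper does this in the preamble to the lemma via Corollary \ref{indep_slide_omega}, cancelling the endpoints of $\omega$ in pairs on the single component). The gap is in the second half. Corollary 1.6 of \cite{KM_unknot} is not proved by a thinness/parity obstruction; it is a rank comparison, and that is exactly the argument the paper spells out: by \cite{MO_quasi} the reduced Khovanov homology of a quasi-alternating knot is a free $\zz$-module of rank $\det(K)$, while by \cite{KM_KHI_Alexander} together with $I^\natural(K;\qq)\cong KHI(K;\qq)$ the reduced instanton homology has rank equal to the sum of the absolute values of the Alexander coefficients, hence at least $\det(K)$. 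Equality of ranks plus freeness of the $E_2$ page over $\zz$ forces all differentials from $E_2$ onward to vanish over $\zz$, hence over $\zz/2$.

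The parity argument you substitute does not close on its own. To run it you would need the exact $(h,q)$-bidegree of each $d_r$, $r\ge 2$, and then check it is incompatible with a single-diagonal $E_2$ page. Neither \cite{KM_filtrations} nor Section \ref{q_section} of this paper supplies that: the filtration results only bound the order of the higher differentials from below (order $\geq 1$ in $h$ and $\geq 2$ in $q$ in the unmarked case, order $\geq 0$ for the modified $q$ here), and a map with $\Delta h = r$ and $\Delta q = 2r$ both satisfies these bounds and preserves a thin diagonal, so thinness alone excludes nothing. Moreover, the modified $q$-filtration is only constructed for ``good'' $\omega$, a hypothesis absent from the lemma, and the quasi-isomorphisms of Section \ref{alternating_marked_points_section} identifying $H(P_K,\partial\omega)$ with $\Kh(K)$ are explicitly not degree-preserving, so you have not even established that the marked $E_2$ page is thin with respect to a bigrading that the instanton differentials respect. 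You need the rank argument (or a substantially stronger grading theorem) to finish.
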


\begin{proof} To avoid confusion, let us spell out the reasoning of Corollary 1.6 from \cite{KM_unknot}. In \cite{KM_unknot}, Kronheimer and Mrowka show that, with $\zz$ coefficients, for any link $L$, there is a spectral sequence with $E_2$ term the reduced Khovanov homology, $Khr(\ol{L})$, which abuts to the reduced instanton homology $I^{\natural}(L)$. They further showed that with $\qq$ coefficients, the reduced singular instanton homology $I^\natural(L,\qq)$ is isomorphic to the sutured Floer homology $KHI(L;\qq)$. 

They also showed in \cite{KM_KHI_Alexander} that for a {\em knot}, the rank of the sutured Floer homology $KHI(K;\qq)$ is the sum of the absolute values of the coefficients of the Alexander polynomial of $K$. Thus, for quasi-alternating knots $K$, the rank of $KHI(K;\qq)$, and therefore that of $I^\natural(K,\qq)$ is bounded below by the determinant of $K$.

In \cite{MO_quasi}, Manolescu and Ozsv\'{a}th showed that the rank of the Khovanov homology for a quasi-alternating link is equal to the determinant. Thus, for quasi-alternating knots, the rank of $I^\natural(K,\qq)$ is equal to that of $Khr(\ol{K},\qq)$. 

Moreover, it was shown in \cite{MO_quasi} that the reduced Khovanov homology over $\zz$ is a free $\zz$ module. Thus, in the instanton complex, the $E_2$ page is a free $\zz$ module, and for the $E_\infty$ page to have the same rank over $\qq$ as the $E_2$ page, which we just proved must hold, the differentials on the $E_2$ page and beyond must vanish over $\zz$. Thus, the spectral sequence collapses on the $E_2$ page over $\zz$, and therefore over $\zz/2$, as desired.
\end{proof}

Let us now extend this result to alternating links:

\begin{thm} For a non-split alternating link projection $P_L$ with marking data $\omega$, the spectral sequence from $C(P_L,\partial \omega)$ to the instanton homology collapses on the $E_2$ page.
\label{spec_collapse}
\end{thm}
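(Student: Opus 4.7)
The plan is to match the ranks of $E_2$ and of the abutment, following the strategy of Lemma \ref{arc_knot_spec_collapse} but extending from knots to non-split alternating links. Both sides will turn out to have rank $2\det(L)$, and combined with freeness of Khovanov homology for quasi-alternating links this will force collapse.

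Step 1 (the $E_2$-page). By Corollary \ref{indep_omega}, the $E_2$-page $H(P_L,\partial\omega)$ is independent of $\omega$ and equal to the ordinary Khovanov homology $Kh(L;\mathbb{F}_2)$. Since alternating links are quasi-alternating, Manolescu--Ozsv\'ath \cite{MO_quasi} show that $\widetilde{Kh}(L;\mathbb{Z})$ is a free $\mathbb{Z}$-module of rank $\det(L)$; in particular, $\text{rk}(Kh(L;\mathbb{F}_2))=2\det(L)$ and $\text{rk}(H_\red(P_L,\partial\omega))=\det(L)$.

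Step 2 (the target). One computes $\text{rk}(I^\#(L,\omega);\mathbb{F}_2)=2\det(L)$ in two substeps. Starting with the reduced version, use the Kronheimer--Mrowka identification $I^\natural(L;\mathbb{Q})\cong KHI(L;\mathbb{Q})$ from \cite{KM_unknot}, together with the fact that for alternating links the double branched cover $\Sigma(L)$ is a Heegaard Floer $L$-space with $|H_1(\Sigma(L))|=\det(L)$, to conclude $\text{rk}(I^\natural(L;\mathbb{Q}))=\det(L)$ at $\omega=\emptyset$. To pass to arbitrary $\omega$, combine the upper bound $\text{rk}(I^\#(L,\omega))\leq\text{rk}(E_2)=2\det(L)$ coming from the spectral sequence with a matching lower bound obtained either (i) by an inductive argument on a quasi-alternating decomposition: pick a crossing with both resolutions quasi-alternating and $\det(L)=\det(L_0)+\det(L_1)$, and apply the unoriented skein exact triangle for $I^\#$ with base case Lemma \ref{arc_knot_spec_collapse}, or (ii) by a direct cobordism argument showing that the rank of $I^\#(L,\omega)$ is constant in $\omega$ (sliding an endpoint of $\omega$ across a crossing is analogous to the Khovanov invariance proof in subsection \ref{alternating_marked_points_section}).

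Step 3 (collapse). Since $\text{rk}(E_\infty)=\text{rk}(I^\#(L,\omega))=2\det(L)=\text{rk}(E_2)$ over $\mathbb{F}_2$, all higher differentials $d_r$ with $r\geq 2$ must vanish. As in the proof of Lemma \ref{arc_knot_spec_collapse}, one can in fact run the rank comparison over $\mathbb{Q}$ and use freeness of $\widetilde{Kh}(L;\mathbb{Z})$ to conclude that collapse happens integrally, and hence over $\mathbb{F}_2$.

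The main obstacle is Step 2: extending the rank computation for the target from the unmarked setting (where the identification with $KHI$ is readily available) to arbitrary marking data $\omega$. One cannot invoke Corollary 1.4 of the introduction, since that corollary is stated as a \emph{consequence} of the present theorem combined with Theorem \ref{naive_khov_invar}, and is therefore not yet available. The cleanest route is the inductive/skein approach, which parallels on the instanton side the Khovanov-side argument already carried out in Section 2; verifying that the surgery triangle on the instanton side is compatible with the skein splitting on the $E_2$-page (so that no cancellation occurs in the triangle for quasi-alternating input) is the technical heart of the proof.
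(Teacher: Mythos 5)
Your strategy (match the rank of $E_2$ against the rank of the abutment) is genuinely different from the paper's, and it founders exactly where you say it does: Step 2. The paper never computes $\rank I^\#(L,\omega)$ for links at all. The lower bound you need is not available from the cited results: the identification $I^\natural(L;\qq)\cong KHI(L;\qq)$ together with the $L$-space property of $\Sigma(L)$ does not yield $\rank KHI(L;\qq)=\det(L)$, because the $L$-space statement concerns $\widehat{HF}(\Sigma(L))$, a different theory with no known rank comparison to $KHI$, and the rank formula of \cite{KM_KHI_Alexander} (rank of $KHI$ equals the sum of the absolute values of the Alexander coefficients, hence $\geq\det$) is proved only for \emph{knots} -- the paper's proof of Lemma \ref{arc_knot_spec_collapse} emphasizes this restriction. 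Your alternative (ii) only gives constancy in $\omega$, not the value at $\omega=\emptyset$, so it inherits the same problem; and your alternative (i), the skein-triangle induction, gives the upper bound $\rank I^\#(L,\omega)\leq 2\det(L)$ for free but the matching lower bound is precisely the ``no cancellation'' statement you defer, and there is no evident way to establish it short of proving the theorem by other means. So as written the argument is circular at its core.

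The paper's route avoids the rank of the target entirely. It inducts on the number of components of $L$: given a non-split alternating $L$ with $l+1$ components, it picks a crossing between two distinct components and adds an adjacent crossing of the same type between the same two strands, producing an alternating link $L'$ with $l$ components (e.g.\ Hopf link $\to$ trefoil). The cube for $L'$ splits into a bottom cube identified with the cube $C$ for $L$ and a top cube $C''$, exhibiting $C'$ as a mapping cone of $C\to C''$ with chain maps $h:C'\to C$ (for both the Khovanov and instanton differentials) and a splitting $g:C\to C'$ of $h$ on the $E_1$ page. A purely algebraic lemma, phrased via the reformulation of collapse in terms of elements $x\in F_p$ with $\partial_{p+1}x=\cdots=\partial_{p+r-1}x=0$, then transfers collapse from $C'$ to $C$. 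The only rank comparison in the whole proof is the knot base case, where the Alexander-polynomial bound is legitimately available. If you want to salvage a rank-based proof, you would first need an independent computation of $\rank I^\#(L,\omega)$ for non-split alternating links with arbitrary marking data, which is an output of this theorem rather than an input.
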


\begin{cor} For a non-split alternating link $L$, the rank of the instanton homology $I^\#(L,\omega)$ is independent of $\omega$.
\end{cor}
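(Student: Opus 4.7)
The plan is to combine the preceding spectral sequence collapse theorem (Theorem \ref{spec_collapse}) with the independence of the marked Khovanov homology for non-split alternating links (Corollary \ref{indep_omega}). Concretely, by Theorem \ref{spec_collapse}, for any alternating projection $P_L$ of $L$ and any $\omega$, the spectral sequence with $(E_1,d_1)$ page $C(P_L,\partial\omega)$ abutting to $I^\#(L,\omega)$ collapses at the $E_2$ page. Over a field (here $\zz/2$), the collapse of the spectral sequence means that the rank of the $E_\infty$ page agrees with the rank of the $E_2$ page, and no extension issues arise, so
\[\dim_{\zz/2} I^\#(L,\omega) \;=\; \dim_{\zz/2} E_2 \;=\; \dim_{\zz/2} H(P_L,\partial\omega).\]

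Now I would invoke Corollary \ref{indep_omega}, which asserts that for non-split alternating $L$ the rank of $H(P_L,\partial\omega)$ does not depend on $\omega$. Combining this with the rank equality above yields that $\dim_{\zz/2} I^\#(L,\omega)$ is independent of $\omega$, as desired. No real obstacle remains; the only thing to check carefully is that working over the field $\zz/2$ (as declared at the outset of the paper) indeed lets one read the rank of the abutment directly off the $E_\infty$ page, which is immediate since there are no extension problems for vector spaces.
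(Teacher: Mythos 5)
Your proposal is correct and follows essentially the same route as the paper's own proof: invoke Theorem \ref{spec_collapse} to identify the rank of $I^\#(L,\omega)$ with that of $H(P_L,\partial\omega)$, then apply Corollary \ref{indep_omega} to remove the dependence on $\omega$. The remark about the absence of extension problems over $\zz/2$ is a harmless extra justification.
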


\begin{proof}[Proof of Corollary] By Theorem \ref{spec_collapse}, the instanton homology of $I^\#(L,\omega)$ has the same rank as the homology of $C(P_L,\partial \omega)$. The latter, however, has the same rank as the homology of $C(P_L,\emptyset)$ by Corollary \ref{indep_omega}, which by Theorem \ref{spec_collapse}, also agrees with $I^\#(L,\emptyset)$. Thus the homology of $I^\#(L,\omega)$ has the same rank as that of $I^\#(L,\emptyset)$
\end{proof}

\begin{proof}[Proof of Theorem \ref{spec_collapse}] In the course of this proof, we are returning to the notation in subsection \ref{alternating_marked_points_section}, where the maps go from the $0$ resolution to the $1$ resolution.

Recall that for an $n$ dimensional cube of resolutions, we have associated $(C, f)$. Let us start by describing this complex; in doing so we will set up the notation for this section. Consider the cube of resolutions associated to the link projection. For $v$ a vertex of the cube,  let $C_v$ be $A^{\otimes n}$ or $0$, where $n$ is the number of components, realised as $I^\omega(S^3, L_v^\natural)$. For $u,v \in \{0,1\}^n$ with $v \geq u$, let the map $f_{uv}:C_u \to C_v$ count instantons on the cobordism from $L_u$ to $L_v$. The cube $C$ is defined to be $C = \oplus_v C_v$, and the maps on it are the $f_{uv}$. 

Here $f_{vv} = 0$, for reasons of degree, and for $|v-u|=1$, $f_{uv}$ is merge or split map as in Khovanov homology when $\omega$ is trivial at the source and target.

We group the complex and differentials by Khovanov cohomological degree; that is, for $i \geq 0$, let $C_i$ denote $\oplus_{|v|=i} C_v$ and for $x \in C$, let $\partial_i(x)$ denote the part of $\partial(x)$ in $C_i$. Let $F_p$ denote $\oplus_{i \geq p} C_i$, so that $F$ is a descending filtration on $C$.

We start with the following Lemma, which reformulates what it means for the spectral sequence to collapse:

\begin{lemma} To say that ``the spectral sequence collapses on the $E_2$ page'' for a link projection singular bundle data $(P_L,\omega)$ is the same as saying that for any $r \geq 2$ and $x \in F_p$ such that $\partial_{p+1} x, \partial_{p+2} x, \ldots,  \partial_{p+r-1} x = 0$, then there is $y \in F_{p+1}$ such that $ \partial_{p+2} y, \partial_{p+3} y,\ldots, \partial_{p+r-1} y = 0$ and $\partial_{p+r}y= \partial_{p+r} x$. 
\end{lemma}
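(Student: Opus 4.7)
The plan is to recognize this lemma as a concrete unpacking of the statement that the spectral sequence associated to the descending filtration $F_p$ on $(C,\partial)$ collapses at the $E_2$ page. I would use the standard formulas
\[E_r^p \;=\; Z_r^p\big/(Z_{r-1}^{p+1} + \partial Z_{r-1}^{p-r+1}), \qquad Z_s^q \;=\; F_q \cap \partial^{-1}(F_{q+s}),\]
with induced differential $d_r[x] = [\partial x]$, and verify that the vanishing of $d_r$ on $E_r$ is precisely the condition stated in the lemma at level $r$.

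First I would translate the vanishing hypotheses in the lemma into the $Z$-notation. Since $\partial F_p \subset F_{p+1}$ by the cube structure, the $C_i$ components of $\partial x$ automatically vanish for $i \leq p$; hence $x \in F_p$ satisfies $\partial_{p+1} x = \cdots = \partial_{p+r-1} x = 0$ if and only if $\partial x \in F_{p+r}$, i.e.\ $x \in Z_r^p$. In exactly the same way, the condition on $y$ in the lemma's conclusion is equivalent to $y \in Z_{r-1}^{p+1}$.

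Next I would unpack what $d_r[x] = 0$ in $E_r^{p+r}$ means for such an $x$. By definition of the quotient, it is equivalent to being able to write $\partial x = z + \partial y$ with $z \in Z_{r-1}^{p+r+1}$ and $y \in Z_{r-1}^{p+1}$. Projecting onto the $C_{p+r}$ summand, the contribution from $z \in F_{p+r+1}$ is zero, so this is the same as saying $\partial_{p+r} x = \partial_{p+r} y$ for some $y \in Z_{r-1}^{p+1}$. The reverse implication is immediate: given such a $y$, the element $\partial x - \partial y$ is a cycle lying in $F_{p+r+1}$, hence in $Z_{r-1}^{p+r+1}$, so $[\partial x] = [\partial y] = 0$ in $E_r^{p+r}$.

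Combining these two steps gives, for each $r \geq 2$, the equivalence between the condition in the lemma at level $r$ and the vanishing of $d_r$ on $E_r$. Ranging over all $r \geq 2$ then translates collapse at the $E_2$ page into exactly the stated condition. The only point that takes any care is the bookkeeping of the indices $p$, $r$, $p+r$, $p+r+1$; since the assertion is essentially a restatement of definitions, I do not anticipate a substantive obstacle.
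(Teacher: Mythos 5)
Your proposal is correct and takes essentially the same approach as the paper: both unpack the standard presentation of the spectral sequence of a filtered complex (the paper quotes the formula for $E^r_p$ from Theorem 3.1 of Chapter XI of MacLane and states that ``the lemma follows from unpacking the definition of $d^r$,'' while you carry out that unpacking explicitly in the $Z^p_r$ notation). Your translation of the vanishing conditions into membership in $Z^p_r$ and $Z^{p+1}_{r-1}$, and of $d_r[x]=0$ into $\partial_{p+r}x=\partial_{p+r}y$, is exactly the intended argument.
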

\begin{proof}  By the definition of the spectral sequence, as in the proof of theorem 3.1 of chapter XI of \cite{Maclane},
\[E^r_p = \eta_p(\{x \in F_p| \partial_{p+i}(x) = 0 | i<r\})/\eta_p(\partial(\{x \in F_{p-r+1}| \partial_{p-r+1+i}(x) = 0 | i<r-1\}))\]
where $\eta_p$ is the projection $F_p \to F_p/F_{p+1}$. The spectral sequence differential $d^r: E^r_p \to E^r_{p+r}$ is the map induced by $\partial$. The lemma follows from unpacking the definition of $d^r$. 
\end{proof}

We now show show that given that the spectral sequence collapses on the $E_2$ page for alternating knots, it also collapses similarly for alternating links, regardless of $\omega$, by induction on the number of components. The base case is the statement that the spectral sequence collapses for alternating knots, Lemma \ref{arc_knot_spec_collapse}; in this case, since there is only one component, $\omega$ is always trivial, and therefore does not affect $I^\sharp(L,\omega)$. We have shown that it does not affect $\Kh(P_L,\omega)$ earlier.

Assume that the claim holds for alternating links of $l$ components. Consider link $L$ with $l+1$ components.

Let $n$ be the number of crossings, indexed $1,\ldots, n$. Consider some crossing $k$ where the two strands are from different components; this exists because $L$ is not split. Without loss of generality, let $k=1$.

Consider the alternating link $L'$ formed by taking $L$ and adding another crossing right next to $k$, between the same two strands, as in Figure \ref{L_to_L_prime}. Note that there are two ways to do this, depending on which side you add the new crossing. In one of these, it will be the case that the $0$ resolution of the new crossing in $L'$ is the same as $L$, in the other it will be the $1$ resolution of the crossing that gives $L$; choose the former of the two.

\begin{figure}[ht!]
\centering
\includegraphics[width=60mm]{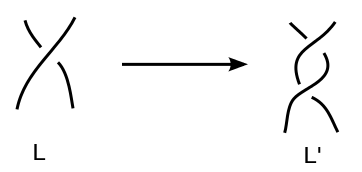}
\caption{A depiction of adding a crossing. For example, the usual projection of the Hopf link would be turned into a trefoil.}
\label{L_to_L_prime}
\end{figure}

Let the new crossing be indexed $0$; that is, the crossings of $L'$ are labelled by $0,1,2,\ldots$, where $0$ is the new crossing, and the others are the same as the corresponding crossing in $L$.

Let $C$ and $C'$ be the complexes for $L$ and $L'$, respectively. Let $V_x$ and $V_x'$ be the sets of vertices of degree $x$ for $L$ and $L'$, respectively. (See Figure \ref{figure10}.)

\begin{figure}[ht!]
\centering
\includegraphics[width=120mm]{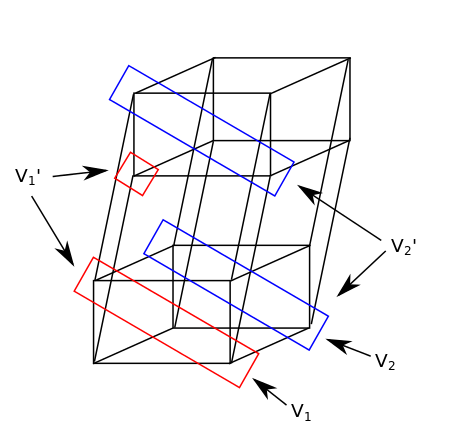}
\caption{}
\label{figure10}
\end{figure}

The cube for $L'$ consists of a bottom cube, that is a cube with $(0, *)$, ie coordinate $0$ in index $0$, and a top cube, $(1, *)$. The bottom cube is isomorphic to $L$ with the same edge and diagonal maps, as in the following claim. We can consider $C_i \subset C'_i$ as the bottom cube, but this inclusion is not a map of complexes.

Observe that relative to the other crossings, the $0$ and $1$ crossing on $L'$ look the same, so that on $L'$,
\[C'_{(0,1,x_2,x_3,\ldots)}  \simeq C'_{(1,0,x_2,x_3,\ldots)}\]
and 
\[d'_{{(0,0,x_2,x_3,\ldots)} \to {(0,1,x_2,x_3,\ldots)}} = d'_{{(0,0,x_2,x_3,\ldots)} \to {(1,0,x_2,x_3,\ldots)}},\]
respecting the above isomorphism, for $x_i \in \{0,1\}$.

\begin{claim} In the cube for $L'$, for integers $a,b$, the maps $f'_{ab}:C_a' \to C_b'$, restricted to the cube for $L$ seen as $(0,*)$ in the cube for $L'$, ie restricted to $C_a \to C_b$, it is the same as $f_{ab}:C_a \to C_b$.
\end{claim}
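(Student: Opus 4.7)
The plan is to reduce the claim to component-wise equalities and then verify that each component $f'_{(0,v)(0,u)}$ equals $f_{vu}$ by identifying the underlying cobordism and singular bundle data on the nose. By definition $f'_{ab}$ is a sum over vertex pairs at cohomological degrees $a,b$ of component maps counting solutions to the perturbed ASD equation on the cobordisms $S'_{v'u'}$, and the bottom cube contributes precisely the pairs with first coordinate $0$. So the restriction of $f'_{ab}$ to $C_a \to C_b$ is the sum of the components $f'_{(0,v)(0,u)}$, and comparing termwise with $f_{ab} = \sum f_{vu}$, it suffices to check each $f'_{(0,v)(0,u)} = f_{vu}$.

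First I verify the identification of chain groups. By our choice of which side to add the new crossing $0$, the $0$-resolution of crossing $0$ in $L'$ returns $L$, so resolving the remaining crossings according to $v$ gives $L'_{(0,v)} = L_v$ as links, and since $\omega$ lies away from crossing $0$, this is an equality of marked links. Thus $C'_{(0,v)} = C_v$ tautologically, and similarly $C'_{(0,u)} = C_u$.

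The main step, and the step I expect to be the main obstacle, is to identify the cobordisms. The cobordism $S'_{(0,v)(0,u)}$ is built from $S^3 \times [0,1]$ by attaching bands at the crossings of $L'$ at which $(0,v)$ and $(0,u)$ disagree. Since both have first coordinate $0$, no band is attached at crossing $0$; near that site $S'_{(0,v)(0,u)}$ is the product cylinder on the straight-through $0$-resolution. The delicate point is to verify that this product neighbourhood can be collapsed by a local isotopy in a way compatible with the band attachments at the other crossings, in particular at the adjacent crossing $1$. Because the band-attachment construction is local to small discs around each crossing and $\omega$ avoids crossing $0$, such a local isotopy exists and exhibits $S'_{(0,v)(0,u)}$ as $S_{vu}$ together with its induced singular bundle data. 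It then follows from the naturality of the instanton construction with respect to identifications of cobordisms equipped with singular bundle data that, once compatible auxiliary data (metrics, perturbations) are chosen on the two sides, the zero-dimensional moduli spaces match pointwise. This yields the $\zz/2$-equality $f'_{(0,v)(0,u)} = f_{vu}$ and hence the claim.
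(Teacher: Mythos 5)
Your proposal is correct and follows essentially the same route as the paper's proof: both identify the marked links $L'_{(0,v)}=L_v$, observe that the cobordisms (with their singular bundle data), families of metrics, and hence moduli spaces for pairs in the bottom face coincide with those for the cube of $L$, and conclude the induced maps agree. You simply spell out in more detail the locality of the band-attachment construction and the need to choose compatible auxiliary data, points the paper asserts more briefly.
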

\begin{proof} For $u,v$ vertices in the cube for $L$, we wish to show that $f_{uv} = f'_{uv}$, where the right hand side is obtained from viewing $u,v$ as vertices in the cube for $L$.

But recall that the maps $f_{uv}$ come from a moduli space over a family of metrics on a cobordism from the unlinks representing $C_u$ to the unlinks repsenting $C_v$, which we call $L_u$ and $L_v$. Let $L_u'$ and $L_v'$ denote the unlinks for $(0,u)$ and $(0,v)$ as vertices in the cube for $L'$.

Then the cobordism $L_u \to L_v$ is isomorphic to the cobordism $L_u' \to L_v'$, and the families of metrics and moduli spaces are also isomorphic. The induced maps therefore agree.
\end{proof}

Consider the cube $C'$ as $C_{00}' \oplus C_{01}' \oplus C_{10}' \oplus C_{11}'$, with $d_{ij}'$ on each $C_{ij}'$ for $i,j \in \{0,1\}$, with additional maps $d_{i,j,i+1,j}'$ and $d_{i,j,i,j+1}$ between parts. So what we have is that $C$ can be thought of as $C_{00}' \oplus C_{01}'$ with $d_{00}'$, $d_{01}'$, and $d_{0,0,0,1}'$ for differentials, and $(C_{01}',d_{01}') \simeq (C_{10}',d_{10}')$. (See Figure \ref{figure11})

\begin{figure}[ht!]
\centering
\includegraphics[width=120mm]{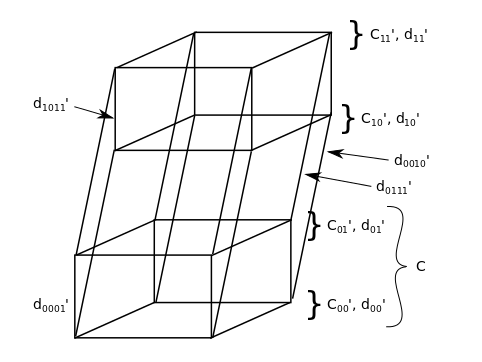}
\caption{}
\label{figure11}
\end{figure}

\begin{lemma} The map of Khovanov complexes (i.e. disregarding diagonal maps), $g:C \to C'$ given by considering $g(x,y) = (x,y,y,0)$ for $(x,y) \in C_{00}' \oplus C_{01}' \simeq C$ and $(x,y,y,0) \in C_{00}' \oplus C_{01}' \oplus C_{10}' \oplus C_{11}' \simeq C'$ is a chain map on the marked Khovanov complex, as is the map in the other direction, $h(x,y,z,w) = (x,y)$. The quotient map $h$ is, moreover, a chain map on the instanton complex.
\end{lemma}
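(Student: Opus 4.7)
The plan is to verify each of the three chain-map assertions directly. Two ingredients will do all the work: the preceding Claim, which identifies $f'_{ab}$ restricted to the $(0,*)$-subcube of $L'$ with $f_{ab}$ on $C$; and the diagrammatic swap $c_0 \leftrightarrow c_1$, which is a diffeomorphism of the marked link $(S^3, L', \omega)$ because the new crossing $c_0$ was placed parallel to the old crossing $c_1$ between the same two strands (and $\omega$ is by hypothesis away from this local picture).

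First I would record the consequence of the swap: it carries the $(0,1,*)$-subcube to the $(1,0,*)$-subcube of $L'$, induces the identification $C'_{01} \simeq C'_{10}$ used throughout this section, and, under that identification, yields $d'_{01} = d'_{10}$, $d'_{00,01} = d'_{00,10}$, and $d'_{01,11} = d'_{10,11}$. These hold equally for the Khovanov edge maps and for the full instanton differentials. For $g$ at the Khovanov level I would compute $g \circ d(x,y) = (d_{00}x,\, d_{00,01}x + d_{01}y,\, d_{00,01}x + d_{01}y,\, 0)$ and $d' \circ g(x,y) = d'(x,y,y,0) = (d'_{00}x,\, d'_{00,01}x + d'_{01}y,\, d'_{00,10}x + d'_{10}y,\, d'_{01,11}y + d'_{10,11}y)$, and compare components: the first two match by the Claim, the third by combining the Claim with the swap symmetry, and the fourth is $2\,d'_{01,11}y = 0$ in characteristic $2$. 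That $h$ is a Khovanov chain map is then immediate, since $h$ is projection and by the Claim $d'_{00}, d'_{01}, d'_{00,01}$ restricted to $C'_{00} \oplus C'_{01}$ are literally $d_{00}, d_{01}, d_{00,01}$.

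For $h$ on the instanton complex I would split by the location of the input $\xi$. If $\xi \in C'_{10} \oplus C'_{11}$, then $h\xi = 0$; and because the $c_0$-coordinate of every vertex reached by $d'\xi$ is still $1$, the image lies in $C'_{10} \oplus C'_{11}$ and is killed by $h$. If $\xi \in C'_{00} \oplus C'_{01} = C$, then $h\xi = \xi$, and the parts of $d'\xi$ in $C'_{10} \oplus C'_{11}$ (including the higher diagonals $C'_{00} \to C'_{11}$ that involve resolving $c_0$) are killed by $h$. The surviving parts of $d'\xi$ in $C'_{00} \oplus C'_{01}$ arise only from cobordisms living inside the $(0,*)$-subcube of $L'$ and so equal $d\xi$ by the Claim. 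Thus $h \circ d'(\xi) = d(\xi) = d \circ h(\xi)$.

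I expect the main subtlety to be the identity $d'_{01,11} = d'_{10,11}$: diagrammatically this is immediate from the $c_0 \leftrightarrow c_1$ swap, but one should verify that the identification $C'_{01} \simeq C'_{10}$ is set up so that the two cobordisms $(0,1,*) \to (1,1,*)$ and $(1,0,*) \to (1,1,*)$ are literally interchanged by the swap, rather than being isomorphic through some non-trivial relabelling. As a sanity check, $g$ is \emph{not} claimed to be an instanton chain map, and indeed an instanton diagonal $d'_{00,11}: C'_{00} \to C'_{11}$ in $L'$ has no counterpart in $C$ and would obstruct $d' \circ g = g \circ d$ at the instanton level, consistent with the precise wording of the lemma.
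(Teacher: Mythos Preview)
Your proposal is correct and follows essentially the same route as the paper: the paper also verifies $d'g = gd$ componentwise using the Claim for the $(0,*)$-subcube, the swap identities $d'_{0001}=d'_{0010}$, $d'_{01}=d'_{10}$, $d'_{0111}=d'_{1011}$, and the characteristic-$2$ cancellation in the last component; and it proves the instanton statement for $h$ by noting that $C'_{10}\oplus C'_{11}$ is a subcomplex (your upper-triangularity argument) so that $h$ is the quotient map. The only point the paper spells out that you leave implicit is that for the instanton differentials on $C$ and on the $(0,*)$-subcube of $C'$ to literally agree one should choose the auxiliary data (perturbations, metrics) on $C'$ and let $C$ inherit them---this is what makes the appeal to the Claim valid at the instanton level.
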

\begin{proof} Consider $C = C_{00}\oplus C_{01}$ where the first $0$ in the index doesn't actually mean anything, but is just to keep notation convenient, and the $0$ and $1$ in the second index indicates the resolution of crossing $1$. Let $d_{00}$ and $d_{01}$ be the differentials on $C_{00}$ and $C_{01}$ respectively, and $d_{0001}$ is the  differential $C_{00} \to C_{01}$. (Note that the differentials here do not include the diagonal maps on the cubes.)

Then we know that $C_{00} = C_{00}'$, $C_{01} = C_{01}'$, $d_{00} = d_{00}'$, $d_{01} = d_{01}'$, and $d_{0001} = d_{0001}'$. Moreover, $C_{01}' = C_{10}'$, $d_{1011}' = d_{0111}'$, and $d_{0001}' = d_{0010}'$.

For $g$ to be a chain map, we want $d'g(x,y) = g(d(x,y))$, where
\[LHS = d'g(x,y) = d'(x,y,y,0) = (d_{00}'x, d_{0001}'x + d_{01}'y, d_{0010}'x + d_{10}'y, d_{0011}' x+ d_{1011}'y + d_{0111}'(y))\]
\[ = (d_{00}'x, d_{0001}'x + d_{01}'y, d_{0001}'x + d_{01}'y, 0)\]
because we are over a ring of characteristic two, and because $d_{1011}' = d_{0111}'$, $d_{0001}' = d_{0010}'$, and $d_{01}' = d_{10}'$. 

On the other hand,
\[RHS = g(d(x,y)) = g(d_{00}x, d_{0001}x + d_{01}y) = (d_{00}'x, d_{0001}'x + d_{01}'y, d_{0001}'x + d_{01}'y, 0) = LHS\]
as desired.

The proof for $h$ is similar.

The fact that $h$ is a chain map on the instanton complex follows from the fact that it is a quotient by $C'':= C'_{10} \oplus C'_{11}$, and the latter is a sub-complex for both Khovanov and instanton differentials. The diagonal maps on $C$ and $C''$ can be chosen to agree with those on $C'$ by choosing auxiliary data, such as perturbations, for the cobordisms in $C'$ and letting $C$ and $C''$ inherit these data from $C'$. 
\end{proof}

To summarise, we now have a sequence
\begin{diagram}
C'' & \rTo^i &  C' & \pile{\rTo^h \\ \lTo_g }& C & \rTo^{(d_{Kh},d_I)}&  C''
\end{diagram}
Where $i$ is the inclusion of the upper cube, $C''$, into the larger cube, $C'$, which is a chain map for both $d_{Kh}$ and $d_I$, and $h$ is the map described above, projecting to the lower cube, also a chain map for both $d_{Kh}$ and $d_I$. Thus $C'$ can be seen as the mapping cone for the map $C \to C''$ for either the Khovanov or the instanton differentials.

The map $g$ is a splitting of the mapping cone for the Khovanov differential.

\begin{lemma} Consider a filtered complex $C'$ viewed as a filtered mapping cone of $d_I: C \to C''$, such that there is a splitting $g$ of the projection $h$ on the $E_1$ page, so that $C,C',C''$ fit into
\begin{diagram}
C'' & \rTo^i &  C' & \pile{\rTo^h \\ \lTo_g }& C & \rTo^{d_I}&  C''.
\end{diagram}
Then if the spectral sequence for $(C',d_I')$ collapses on the $E_1$ page, then the same holds for $(C,d_I)$.
\end{lemma}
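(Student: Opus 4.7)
The plan is to exploit the one-sided section $g$ on the $E_1$ level together with the fact that $h$ is a genuine chain map for the full instanton differential $d_I$, and to transfer the vanishing of all higher differentials from $C'$ down to $C$ via $h_{\ast}$.

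First I would verify that both $g$ and $h$ respect the cube-degree filtration. For $h$ this is automatic, as it is the projection onto the quotient $C'/C''$ by a subcomplex supported in higher filtration. For $g(x,y)=(x,y,y,0)$, the two copies of $y$ live in $C'_{01}$ and $C'_{10}$, both of which carry cube-degree $1+\sum_{i\geq 2}x_i$ as $(x_2,x_3,\ldots)$ varies, so $g$ is filtration-preserving. Consequently both $g_{\ast}$ and $h_{\ast}$ descend to the associated graded. Since $g$ commutes with the Khovanov differential $d_{\mathrm{Kh}}=d_1$ it induces a chain map $(E_1(C),d_1)\to(E_1(C'),d_1)$ and hence a map $g_{\ast}:E_2(C)\to E_2(C')$; since $h$ commutes with the full $d_I$, it induces $h_{\ast}$ on every $E_r$ commuting with every $d_r$. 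The chain-level identity $h\circ g=\mathrm{id}_C$ descends to $h_{\ast}g_{\ast}=\mathrm{id}$ on $E_2(C)$, so $h_{\ast}:E_2(C')\to E_2(C)$ is split-surjective at the $E_2$ level.

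The main step is then a short induction. Assuming the spectral sequence of $(C',d_I')$ collapses, i.e.\ $d_r^{C'}=0$ for all $r\geq 2$, suppose inductively that $d_s^C=0$ for $2\leq s<r$. This forces $E_r(C)=E_2(C)$ and $E_r(C')=E_2(C')$, so $h_{\ast}$ remains surjective on $E_r$; then $d_r^C\circ h_{\ast}=h_{\ast}\circ d_r^{C'}=0$ combined with surjectivity of $h_{\ast}$ yields $d_r^C=0$, closing the induction and showing that $(C,d_I)$ collapses at the same page.

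I do not anticipate a serious obstacle: the content of the argument is entirely the existence of a filtration-preserving chain-level section $g$ at the Khovanov level combined with $h$ being a strict chain map for $d_I$. The one point requiring care is the verification that the cube-degree is preserved by the identification $C'_{01}\simeq C'_{10}$; this is where the symmetric placement of the two parallel crossings in the construction of $L'$ is essential, and it is the reason the argument would fail for an asymmetric construction.
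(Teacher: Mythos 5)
Your argument is correct and is in substance the same as the paper's: both use the fact that $g$ is a section of $h$ at the $E_1$ level to obtain surjectivity of $h_\ast$ on $E_2$, and then use that $h$ is a chain map for the full instanton differential to transfer the vanishing of the higher differentials from $C'$ to $C$. The paper carries this out with explicit chain elements (lifting $x$ to $g(x)$, correcting it inside $C'$ by elements of successively higher filtration using the collapse of $C'$, and pushing back down by $h$), which is exactly the element-level form of your surjectivity-plus-functoriality induction.
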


\begin{proof} We wish to show that for $r \geq 2$ and $x \in F_pC$ such that $\partial_{p+1}x, \partial_{p+2}x, \ldots, \partial_{p+r-1}x =0$ then there is $y \in F_{p+1}C$ such that $\partial_{p+2}y, \ldots \partial_{p+r-1}y = 0$, and $\partial_{p+r}y=\partial_{p+r}x$.

Let $x \in F_pC$ be such that $\partial_{p+i} x = 0$ for $i<r$. Because $\partial_{p+1}x=0\in F_{p+1}C/F_{p+2}C$, applying the above lemma we have $\partial_{p+1}'g(x)=0 \in F_{p+1}C'/F_{p+2}C'$; here the differentials $d$ and $d_1$ agree because when considering $\partial_{p+1}$ on $F_p$, only the $d_1$ maps come into the picture.

By the assumption of spectral sequence collapse on $L'$, there is $y_{p+1}' \in F_{p+1}C'$ such that $\partial_{p+2}'(g(x)) = \partial_{p+2}' (y_{p+1}') \in F_{p+2}C'/F_{p+3}C'$. 

Consider $g(x)+y_{p+1}'$, we have $\partial_{p+1}'(g(x)+y_{p+1}')=0$ and $\partial_{p+2}'(g(x)+y_{p+1}')=0$, so applying spectral sequence collapse again, we get that there is $y_{p+2}' \in F_{p+2}C'$ with $\partial_{p+3}'(g(x_p)+y_{p+1}'+y_{p+2}')=0$. Iterating, we get that for any $r$ there is $y' = y_{p+1}'+\cdots +y_{p+r-1}' \in F_{p+1}C'$ such that $\partial_{p+i}' (g(x)+y')=0$ for $i \leq r$. 

For $x+h(y')$ in $C$, we have $h(y') \in F_{p+1}C$ and $\partial_{p+i}(x_p+h(y'))=0$ for $i \leq r$, by applying $h$ to the statement $\partial_{p+i}' (g(x)+y')=0$ for $i \leq r$, and using that $h$ is a chain map for both $d$ and $d_1$.

Let $y = h(y') \in F_{p+1}C$. Then,
\[\partial_{p+i}(y)=\partial_{p+i}(h(y'))=\partial_{p+i}(x)\]
for $i\leq r$, as desired.

\end{proof}

Theorem \ref{spec_collapse} now follows from the above lemma.

\end{proof}

\section{Binary dihedral representations}

In this section, we study the effect of $\omega$ on binary dihedral representations $R(L, \omega)$ and $R_B(L,\omega)$, for a link $L$ with $\omega$, as defined in the introduction.

To understand these representations, let us consider a projection for the link along with $\omega$, drawn in two dimensions, and let us consider ``arcs'' in the projection, meaning continuous pieces of the drawing, where two adjacent arcs are either separated by something passing over the gap between them, or by an endpoint of $\omega$.

Let us label the arcs of the projection $a_{i,j}$ with $1 \leq j \leq j_i$, where $i$ indexes the component number, $j$ indexes the arc number on a certain component, and $j_i$ is the number of arcs on component $i$; without loss of generality let us choose a labelling for the arcs such that $a_{i,1}, a_{i,2}, \ldots a_{i,j_i}$ go along component $i$ in counter-clockwise order. Let $b_{i,j}$ denote the arcs of components of $\omega$, labelled similarly.

The index $j$ in $a_{i,j}$ will be taken mod $j_i$. (However on $b_{i,j}$ the indices are not modulo anything.)

\begin{lemma} The $\omega$ dependence of the spaces $R(L,\omega)$ and $R_B(L,\omega)$ can be reduced to dependence on the parity of the number of endpoints of $\omega$ on each component.
\end{lemma}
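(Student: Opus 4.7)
The plan is to express $R(L,\omega)$ and $R_B(L,\omega)$ via a Wirtinger-style presentation of $\pi_1(S^3\setminus(L\cup\omega))$ and to exploit the centrality of $-1\in SU(2)$ to reduce all of $\omega$'s contribution to sign-flips at its endpoints.

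First I would set up the presentation: generators $a_{i,j}$ for the arcs of $L$ and $b_{i,j}$ for the arcs of $\omega$, with the standard Wirtinger relation at each crossing (whether between two strands of $L$, two strands of $\omega$, or one of each), and at each endpoint of $\omega$ on $L$ a relation expressing the new meridian as a product of the old one with the meridian of the incoming arc of $\omega$, conjugated appropriately. By the defining condition of $R(L,\omega)$, every generator $b_{i,j}$ is conjugate to $\mu_\omega$ and therefore $\rho(b_{i,j})=-1\in S^1_A$. Since $-1$ is central in $SU(2)$, every Wirtinger relation involving conjugation by a $b_{i,j}$ collapses to an equality, and every crossing involving $\omega$ imposes no constraint beyond $\rho(b_{i,j})=-1$. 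The endpoint relation reduces to a sign-flip $\rho(a_{i,j+1})=-\rho(a_{i,j})$ whenever the two arcs of $L_i$ are separated by an endpoint of $\omega$. Thus $R(L,\omega)$ is cut out inside $BD^{\{a_{i,j}\}}$ by the Wirtinger relations at the crossings of $L$ together with one sign-flip per endpoint of $\omega$ on $L$; the projection of $\omega$ away from $\partial\omega$ contributes nothing.

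Next I would verify invariance of this cut-out locus under the elementary moves on $\omega$ that preserve endpoint parity on each component. Any isotopy of $\omega$ rel $\partial\omega$ within $S^3\setminus L$, and any Reidemeister-type moves internal to $\omega$ or between $\omega$ and $L$ away from $\partial\omega$, leave the sign-flip data untouched. Sliding an endpoint along a single arc $a_{i,j}$ of $L$ is a relabeling that preserves everything; sliding an endpoint across a crossing of $L$ transports one sign-flip to a sign-flip on the neighboring arc, and the Wirtinger relation $\rho(a_{i,j+1})=\rho(\mu)\rho(a_{i,j})\rho(\mu)^{-1}$ together with centrality of $-1$ shows the two cut-out loci are identified. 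Creating or deleting a short arc $\gamma$ of $\omega$ with both endpoints on a single arc of $L$ inserts a pair of consecutive sign-flips that cancel while adjoining a free generator forced to $-1$, so the cut-out locus is unchanged. Any two $\omega$, $\omega'$ with the same endpoint parity on each component of $L$ can be related by a sequence of these moves, so we conclude $R(L,\omega)\cong R(L,\omega')$.

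The same argument covers $R_B(L,\omega)$: the condition $\rho(\mu_{L_i})\in S^1_B$ is preserved, because the only manipulation of $\rho(a_{i,j})$ by the sign-flips is left multiplication by $-1\in S^1_A$, and $-S^1_B=S^1_B$. The main obstacle is making the slide-across-a-crossing move precise: one has to carry out the local Wirtinger computation, keeping track of which meridian gets negated and confirming that this exactly matches the negation that appears after the slide. The remaining moves are either routine isotopies or local creation/cancellation arguments that reduce to the identity $(-1)(-1)=1$.
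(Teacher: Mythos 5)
Your proposal is correct and follows essentially the same route as the paper: a Wirtinger-style presentation in which the centrality of $-1$ makes the over/under crossings of $\omega$ irrelevant, leaving only sign-flips at the endpoints, which are then cancelled in pairs and transported across crossings of $L$ by a renaming $\theta_{i,j}\mapsto -\theta_{i,j}$. The "local Wirtinger computation" you flag as the remaining obstacle is exactly the short elimination of $\theta_{i,j+1}=-\theta_{i,j}$ that the paper carries out, so nothing essential is missing.
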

\begin{proof} Let $x_{i,j}$ and $y_{i,j}$ denote the meridians around $a_{i,j}$ and $b_{i,j}$ respectively. Then representations in $R(L,\omega)$ are given by the images of $x_{i,j}$, which we denote $\theta_{i,j}$ in the binary dihedral group, $N$, with constraints:
\[\theta_{i,j} \theta_{i',j'} = \theta_{i',j'}\theta_{i,j+1}\]
when $a_{i',j'}$ passes between arcs $a_{i, j}$ and $a_{i,j+1}$,
\[\theta_{i,j} \theta_{i,j+1}^{-1} = -1\]
when there is an endpoint of $\omega$ separating $a_{i, j}$ and $a_{i,j+1}$, and
\[\theta_{i,j} (-1) = (-1) \theta_{i,j+1}\]
where some arc $b_{i',j'}$ passes between arcs $a_{i, j}$ and $a_{i,j+1}$.

The last constraint would be the same if we had $\omega$ pass under instead of over the part of $L$, because it just says $\theta_{i,j} = \theta_{i,j+1}$, and if $\omega$ passed under instead of over, we would have only one $a_{i,j}$ instead of having $\omega$ split it into $a_{i,j}$ and $a_{i,j+1}$, which would have the same effect.

This shows that $R(L,\omega)$ only depends on the endpoints.

It remains to show that $R(L,\omega)$ only depends on the parity of the number of endpoints of $\omega$ on each component. It is clear that if there are two endpoints of $\omega$ on the same arc $a_{i,j}$, then we can cancel them.

It now suffices to show that dragging an endpoint of $\omega$ across a crossing of the link does not affect $R(L,\omega)$. Suppose there is an endpoint of $\omega$ separating $a_{i,j}$ and $a_{i,j+1}$. Then the relations involving $\theta_{i,j}$ and $\theta_{i,j+1}$ are 
\[\theta_{i,j-1} \theta_{i',j'} = \theta_{i',j'}\theta_{i,j} \text{ and }\theta_{i,j+1} \theta_{i'',j''} = \theta_{i'',j''}\theta_{i,j+2},\]
but $\theta_{i,j+1} = -\theta{i,j}$, because $a_{i,j}$ and $a_{i,j+1}$ are separated by an endpoint of $\omega$, so we could just eliminate $\theta_{i,j+1}$ and write the relations as:
\[\theta_{i,j-1} \theta_{i',j'} = \theta_{i',j'}\theta_{i,j} \text{ and }\theta_{i,j} \theta_{i'',j''} = -\theta_{i'',j''}\theta_{i,j+2}.\]

In this setting, we can instead look at the picture as having some marked crossings on each component, and having $\theta_{ij}$ with crossing relations $\theta_{i,j-1} \theta_{i',j'} = \theta_{i',j'}\theta_{i,j}$ at normal crossings and $\theta_{i,j-1} \theta_{i',j'} = -\theta_{i',j'}\theta_{i,j}$ at marked crossings, and we are saying that moving the markings around along $a_{ij}$ for fixed $i$ does not affect the representation, but moving a marked crossing from being between $a_{i,j}$ and $a_{i,j+1}$ to being between $a_{i,j+1}$ and $a_{i,j+2}$ is like flipping the sign of $\theta_{i,j+1}$, which is just a renaming and has no effect on $R(L,\omega)$ as desired.

\end{proof}

In the course of showing that lemma, we exhibited a different way to label arcs, which we will now adopt. Consider $a_{ij}$ the arcs of $L$, now only considered to be separated if something in $L$ passes over; that is, we are ignoring $\omega$ in this picture. For each component of $L$ that has an odd number of endpoints of $\omega$, we consider one of the crossings for which that component is the underbranch to be marked, and we have that $R(L,\omega)$ is given by $\theta_{ij} \in BD$ with relations
\[\theta_{i,j} \theta_{i',j'} = \theta_{i',j'}\theta_{i,j+1}\]
for an unmarked crossing of $a_{i',j'}$ passing over $L_i$ separating $\theta_{i,j}$ and $\theta_{i,j+1}$ and 
\[\theta_{i,j} \theta_{i',j'} = -\theta_{i',j'}\theta_{i,j+1}\]
if the crossing in question is marked.

The relations for crossings shows that if two arcs belong to the same component, their images are conjugate to each other. Note that in $BD$, elements of $S^1_B$ can only be conjugate to other elements of $S^1_B$, and the same for $S^1_A$, so each component maps entirely to one of $S^1_A$ and $S^1_B$.

\subsection{Concerning $R_B(L,\omega)$}
We restrict our attention to $R_B(L,\omega)$, i.e., the conjugacy classes of representations that take meridians of the link to $S^1_B$. Note that for $X=\cos(x) J+\sin(x) K$, and $Y = \cos(y)J+\sin(y)K$, we have
\[Y^{-1}XY = \cos(2y-x)J+\sin(2y-x)K.\]
Thus, $X_1Y=YX_2$, for $X_i = \cos(x_i) J+\sin(x_i) K$ means $x_1+x_2=2y$, modulo $2\pi$.

Moreover, for $X=\cos(x) J+\sin(x) K$, the quaternion $-X$ corresponds to angle $x+\pi$. 

Now, changing notation, we can think of arc $x_{i,j}$ as mapping to angle $\theta_{i,j} \in \rr/(2\pi\zz)$, and the relations are
\[\theta_{i,j}+\theta_{i,j+1} = 2\theta_{i',j'}\]
for an unmarked crossing, and
\[\theta_{i,j}+\theta_{i,j+1} = 2\theta_{i',j'}+\pi\]
for a marked crossing.

We can represent this as 
\[M\theta = v,\]
where $M$ is a matrix with coefficients $0$, $\pm 1$ or $\pm 2$, $\theta$ is a vector whose entries are $\theta_{i,j}$ and $v$ is a vector with coefficients $0$ or $\pi$, with the $\pi$ corresponding to marked crossings.

Let us describe $M$ more explicitly: the rows of $M$ correspond to crossings and the columns to arcs. For each crossing, its row has a $1$ for each of the two arcs that end there, a $-2$ for the arc that goes over, and $0$s elsewhere. (The reason the entries could be $-1$ or $2$ is that it is possible that some of the three arcs described could coincide, in which case the $1$s or $-2$s add.)

Representations in $R_B(L,\omega)$ now correspond to solutions to $M\theta = v$. Note that conjugating a representation shifts all entries of $\theta$ by a constant. Thus, conjugacy classes can be seen as vectors $\theta \in (\rr/2 \pi \zz)^n$ with $\theta_n=0$, with $M\theta = v$. 

Note that if $v=0$, that corresponds to $\omega$ being trivial. Moreover, if there is any solution to $M\theta=v$ with $\theta_n=0$, then the number of such solutions is the same as the number of solutions to $M\theta=0$ with $\theta_n=0$, because given one solution to the former, other solutions are obtained by translation by solutions of the latter. We immediately deduce the following:
\begin{lemma} For any link $L$, either $R_B(L,\omega) = R_B(L,\emptyset)$, or $R_B(L,\omega) = \emptyset$. 
\end{lemma}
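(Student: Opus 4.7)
The plan is to read the lemma off the linear-algebraic reformulation that was just set up. Fix a labelling of the arcs of $L$ with one arc called the $n$-th. Conjugacy classes in $R_B(L,\omega)$ are in bijection with tuples $\theta \in (\rr/2\pi\zz)^N$ (where $N$ is the number of arcs) satisfying $M\theta = v$ and the normalization $\theta_n = 0$ which pins down the global conjugation freedom, and $R_B(L,\emptyset)$ is the special case $v = 0$.

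First I would record that $R_B(L,\emptyset)$ is always nonempty: the abelian representation sending every meridian of $L$ to $J$, i.e.\ the solution $\theta \equiv 0$, lies in the solution set of $M\theta = 0$ with $\theta_n = 0$. Hence $R_B(L,\emptyset)$ is the additive subgroup $H := \{\theta \in (\rr/2\pi\zz)^N : M\theta = 0,\ \theta_n = 0\}$.

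Now suppose $R_B(L,\omega)$ is nonempty and pick any $\theta^* \in R_B(L,\omega)$. Then for an arbitrary tuple $\theta$ satisfying $\theta_n = 0$, the equivalence
\[M\theta = v \quad \Longleftrightarrow \quad M(\theta - \theta^*) = 0\]
together with $(\theta - \theta^*)_n = \theta_n - \theta^*_n = 0$ shows that translation $\phi \mapsto \phi + \theta^*$ gives a bijection $H \leftrightarrow R_B(L,\omega)$. This is exactly the content of the lemma, with the ``$=$'' interpreted as ``in bijection as sets of conjugacy classes,'' consistent with the counting statement of Theorem 1.8 below.

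There is essentially no obstacle, since every geometric difficulty has been absorbed into producing the matrix equation $M\theta = v$ in the preceding paragraphs; the only remaining content is the affine-space observation that a solution set of an inhomogeneous linear equation is either empty or a translate of the kernel.
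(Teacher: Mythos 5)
Your argument is correct and is essentially identical to the paper's: the paper likewise observes that the solution set of $M\theta = v$ with $\theta_n = 0$ is either empty or a translate of the solution set of $M\theta = 0$ with $\theta_n = 0$, and that the latter is nonempty because $\theta = 0$ solves it. Nothing is missing.
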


Of course, $R_B(L,\emptyset)$ cannot be empty because the $\theta = 0$ is a solution to $M\theta= 0$ with $\theta_n = 0$, so the two cases described in the lemma are mutually exclusive.

Note that if $L = L_1 \cup L_2$ is a split link, and $L_1$ is an unknot with an endpoint of $\omega$, then $R_B(L,\omega)=0$, because the equation for it is $\theta_i = \theta_i+\pi \pmod{2 \pi}$. If $L_1$ is an unknot with no endpoints of $\omega$, then it does not add any interesting structure to $R_B(L_1 \cup L_2, \omega)$. For the rest of this section, we will only consider links that do not have a split unknot component. 

If there is a component without any crossings in which it is the underbranch, then it is clearly a split unknot component. Thus, we may assume that every component has at least one crossing. This implies that on each component, the number of crossings is equal to the number of arcs; after all, every arc has two endpoints (as long as it is not a whole component) and every crossing has two endpoints on it. Thus, the matrix $M$ is square.

We may also rearrange $M$ so that columns $1,\ldots a_1$ correspond to arcs of the first component, going (in some direction) along the component, columns $a_1+1,\ldots a_2$ correspond to the arcs of the second component, etc. We may also arrange so that rows $1,\ldots a_1$ correspond to the crossings that separate arcs $(1,2)$, $(2,3)$, ... $(a_1,1)$, and rows $a_1+1, \ldots a_2$, similarly correspond to crossings separating arcs of component $2$.

Note that this means that the diagonal of the matrix has all entries $\pm 1$ or $2$. Let $M_-$ denote the matrix obtained by deleting the $n$th row and the $n$th column of $M$. It is easy to see that the rank of $M$ is at least the rank of $M_-$. 

Moreover, the determinant of $M_-$ is the same as the determinant of the link (this could be taken to be a definition of the determinant, cf \cite{MI_virtual} page 79-80).

We can now show the following theorem:

\begin{thm} If $L$ is a link with $\det(L) \neq 0$ then $R_B(L,\omega)$ is independent of $\omega$.
\end{thm}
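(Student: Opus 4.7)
The plan is to combine the preceding lemma with a proof that $R_B(L,\omega)\neq\emptyset$ whenever $\det L\neq 0$. By the setup immediately preceding the theorem, conjugacy classes in $R_B(L,\omega)$ correspond to solutions $\theta\in(\rr/2\pi\zz)^n$ (with $\theta_n=0$) of $M\theta=v$, where $v=\pi w$ and $w\in\{0,1\}^n$ records the marked crossings. Applying the snake lemma to $0\to 2\pi\zz^n\to\rr^n\to(\rr/2\pi\zz)^n\to 0$ under $M$, and using $\det L=|\det M_-|\neq 0$ (which forces $M$ to have rank $n-1$ with integer left null lattice $\zz\cdot y$ for a primitive $y\in\zz^n$), the solvability condition reduces to $y^T w\in 2\zz$.

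The column of $M$ corresponding to an arc $a$ has entries $+1$ at its two underpass endpoints and $-2$ at overpass crossings, so $y^T M=0$ forces $y_{c_1(a)}\equiv y_{c_2(a)}\pmod 2$. Following arcs around each component shows that $y_c\pmod 2$ takes a common value $p_i\in\{0,1\}$ on all crossings at which component $i$ is the underpass. The marking rule selects exactly one underpass crossing per component with an odd number of $\omega$-endpoints; since the count of such components is always even, $y^T w\equiv\sum_{i\in T}p_i\pmod 2$ for an arbitrary even-cardinality subset $T\subseteq\{1,\ldots,k\}$. This vanishes modulo $2$ for every such $T$ if and only if all $p_i$ agree, and primitivity of $y$ then forces $p_i=1$ for every $i$, i.e., $y\equiv\mathbf{1}\pmod 2$.

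It remains to prove $y\equiv\mathbf{1}\pmod 2$ from $\det L\neq 0$. A split link has block-diagonal Wirtinger matrix and therefore zero determinant, so $L$ must be non-split; hence the linking graph of $L$—with components as vertices and inter-component crossings as edges—is connected, and it suffices to prove $p_i=p_j$ whenever components $i,j$ share a crossing. Fix such a crossing $c$ with $i$ as underpass and $j$ as overpass, and let $a\subset L_j$ be the overpass arc at $c$. The column relation for $a$ reads
\[
y_{c_1(a)}+y_{c_2(a)}=2y_c+2\sum_{c'\neq c}y_{c'},
\]
where each $c'$ is a crossing at which $a$ is overpass. Dividing by $2$ and reducing modulo $2$ yields an identity that ties $p_j$ on the left to $p_i$ and the parities from other components on the right. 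Summing these identities across all overpass arcs of $L_j$ and using non-degeneracy from $\det L\neq 0$ to eliminate the auxiliary $p_\ell$'s should force $p_i=p_j$.

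The main obstacle is this final deduction, which requires care with the mod-$4$ refinement of the Wirtinger null-space equations and a genuine use of $\det L\neq 0$. A conceptually cleaner alternative identifies $y$ as the primitive generator of $\mathrm{Hom}(\mathrm{coker}(M:\zz^n\to\zz^n)_{\mathrm{free}},\zz)$, so $y\equiv\mathbf{1}\pmod 2$ becomes the statement that the free part of $\mathrm{coker}(M)$ pairs to odd values with every standard basis vector $e_c$; this can be extracted from the structure of $H_1(\Sigma_2(L);\zz)$ together with non-splittedness. Once $y\equiv\mathbf{1}\pmod 2$ is established, the chain of reductions above yields $R_B(L,\omega)\neq\emptyset$, and the theorem follows from the previous lemma.
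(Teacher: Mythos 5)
Your setup and reduction agree with the paper's: conjugacy classes in $R_B(L,\omega)$ are solutions of $M\theta=v$ over $\rr/2\pi\zz$, the hypothesis $\det(L)=\det(M_-)\neq 0$ gives $\rank(M)\geq n-1$ so that the left kernel is spanned by a single primitive integer vector $y$, and solvability is equivalent to $y^Tw$ being even, where $w$ is the indicator vector of the marked crossings (an even number of them, one for each component carrying an odd number of $\omega$-endpoints). Up to that point the argument is sound, and your observation that the column relations force $y_c \bmod 2$ to be constant on the underpass crossings of each component is correct.

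The gap is the step you yourself flag: you never prove that the $p_i$ all agree (equivalently, that $y\equiv\mathbf{1}\pmod 2$ after invoking primitivity), and this is the entire content of the theorem. Your mod-$2$ reading of the column relations is structurally unable to compare parities across different components, because the overpass entries of $M$ carry a factor of $2$ and vanish mod $2$; the proposed mod-$4$ refinement is not carried out, and it is not clear how $\det L\neq 0$ would enter it. The paper closes this by simply exhibiting the left null vector explicitly: the vector $u$ with $u_c=\sign(c)\in\{\pm 1\}$ lies in $\ker(M^T)$, verified arc by arc from the behaviour of crossing signs along an overpass arc. This makes the statement that all entries of $y$ are odd immediate, and the final step --- choosing $\pm\pi$ representatives of the marked entries of $v$ so that the dot product with $u$ vanishes --- is then trivial because the number of marked crossings is even and every entry of $u$ is $\pm 1$. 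To complete your proof you would need either this explicit identification of $y$ or an equally concrete substitute; the suggested detour through $H_1(\Sigma_2(L);\zz)$ is only a sketch and would in any case amount to rediscovering the same vector.
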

\begin{proof} As mentioned in the preamble to this theorem, the determinant of $L$ is the determinant of $M_-$, so since the determinant is non-zero, $M_-$ has rank $n-1$, so $M$ has rank at least $n-1$. We wish to show that $v$ is in the image of $M:(\rr/2\pi \zz)^n \to (\rr/2 \pi \zz)^n$. Recall that the entries of $v$ correspond to crossings. We have grouped them by the component of the underbranch. The entries of $v$ are $\pi$ for some crossings, and otherwise zero, where there is at most one entry that is $\pi$ for each component.

To understand the image of $M$ in $(\rr/2 \pi \zz)^n$, we analyse the image of $M$ in $\rr^n$. Note that $\im (M) = (\ker(M^T))^\perp \subset \rr^n$, where $(\ker(M^T))^\perp$ denotes the orthogonal complement of the kernel of the transpose of $M$. Since $M$ has rank at least $n-1$, to describe $(\ker(M^T))^\perp$, it suffices to find one vector in $\ker(M^T)$.

Consider the vector $u$ with entries $\pm 1$, where the sign is the sign of the crossing (where vectors in the domain of $M^T$ are thought to be indexed by crossings). We would like to show that $M^Tu=0$. Rows of $M^T$ correspond to arcs, with entries $+1$ for each of the two (counted with multiplicity) crossings in which the arc is an under-branch, and $-2$ for each crossings for which the arc is the over-branch.


Thus, on each arc, label the crossings $c_1, c_2,\ldots, c_k$, where $c_1$ and $c_k$ are the crossings at the ends, and are allowed to coincide. Here, when we count the crossings of an arc, we are including both the crossings at the end, where the arc is the underbranch, and the crossings in the middle, where the arc is the overbranch; it is easy to see that with this definition, each arc must involve at least two crossings. Then the crossings $c_2,c_3,\ldots c_{k-1}$ alternate signs. Crossings $c_1$ and $c_2$ have the same sign, and crossings $c_{k-1}$ and $c_k$ have the same sign. It is now easy to see that $u_{c_1} -2(u_{c_2}+u_{c_3}+\cdots+u_{c_{k-1}})+u_{c_k}=0$ by checking in the two cases arising from the parity of $k$. Thus, the $u$ we constructed is in $\ker(M^T)$, so the image of $M$ is the orthogonal complement of $u$.

At this point, to show that $v  \in (\rr/2 \pi \zz)^n$ is in the image of $M$, it suffices to show that there is some representative of it in $\rr^n$ which is orthogonal to $u$. Some representatives of $v$ are $(\pm \pi, 0,0,\ldots, 0, \pm \pi, 0, \ldots 0, \pm \pi, 0,\ldots)$, where there are an even number of entries that are $\pm \pi$, and we are allowed to choose the signs. It is easy to see that we can choose signs so that $v \cdot u = 0 \in \rr$, because all of the entries of $u$ are $\pm 1$, so we can choose the signs in $v$ to make the $\pi$s cancel.

\end{proof}

This theorem implies that for quasi-alternating links, $R_B(L,\omega)$ is independent of $\omega$. Moreover from the proof of it, we see that the signed sum of each column of $M$ is zero. We already know that the sum of each row is zero, because each row has two $1$s and a $-2$, (with multiplicity). Hence, deleting a column and then deleting a row do not change the rank of $M$, and we deduce that the rank of $M$ is equal to the rank of $M^-$. We will use this to show the following partial converse to the above theorem:

\begin{thm} Let $L$ be a two component link with $\det(L) = 0$. Then for $\omega$ going between the two components, $R_B(L,\omega) = \emptyset$. 
\end{thm}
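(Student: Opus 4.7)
The plan is to extend the matrix framework of the preceding theorem. With notation there, $R_B(L,\omega) \neq \emptyset$ precisely when $v$ lies in the image of $M:(\rr/2\pi\zz)^n \to (\rr/2\pi\zz)^n$. The lattice $\ker M^T \cap \zz^n$ is saturated in $\zz^n$ (halving an integer nullvector of $M^T$ with even entries produces another integer nullvector), so any integer-valued functional on it extends to $\zz^n$; consequently $v$ lies in this image iff there exist signs $\epsilon_1,\epsilon_2 \in \{\pm 1\}$ such that $\epsilon_1 u'_{c^{(1)}} + \epsilon_2 u'_{c^{(2)}} \in 2\zz$ for every integer $u' \in \ker M^T$. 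Here $c^{(i)} \in C_i$ denotes the marked crossing on $L_i$, with $C_i$ the crossings at which $L_i$ is the underbranch. The sign vector $u_c = \sign(c)$ already satisfies this condition automatically, so the task is to produce a second integer element of $\ker M^T$ that breaks it.

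The hypothesis $\det L = 0$ combined with the identity $\rank(M) = \rank(M_-)$ from the preceding theorem gives $\dim_\qq \ker M^T \geq 2$. Meanwhile the reduction $\ol M = M \bmod 2$ has rows with exactly two $1$-entries (the $-2$ overbranch entry vanishes), so $\ol u \in \ker \ol M^T$ iff $\ol u$ is constant across the two undercrossings at each arc's endpoints. The graph on crossings with arcs as edges partitions into exactly one cycle per component of $L$, giving $\dim_{\ff_2} \ker \ol M^T = 2$. Combined with the general inequality $\dim_{\ff_2} \ker \ol M^T \geq \dim_\qq \ker M^T$ that holds for integer matrices, both dimensions equal $2$.

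The principal step, and the main technical point, is to lift the component indicator $\chi_{L_1} \in \ker \ol M^T$ (value $1$ on $C_1$ and $0$ on $C_2$) to an integer element of $\ker M^T$. Saturation of $\ker M^T \cap \zz^n$ implies its mod-$2$ reduction is injective on the rank-$2$ quotient $(\ker M^T \cap \zz^n)/2(\ker M^T \cap \zz^n)$, and the resulting $\ff_2$-subspace of $\ker \ol M^T$ is forced by dimension matching to be all of it. Lifting $\chi_{L_1}$ yields $u' \in \ker M^T \cap \zz^n$ with $u'_{c^{(1)}} \equiv 1$ and $u'_{c^{(2)}} \equiv 0 \pmod 2$, so $\epsilon_1 u'_{c^{(1)}} + \epsilon_2 u'_{c^{(2)}}$ is odd for every sign choice. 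This parity obstruction rules out $v \in \operatorname{Im}(M)$, proving $R_B(L,\omega) = \emptyset$.
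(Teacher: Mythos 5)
Your proof is correct and follows essentially the same route as the paper's: reduce to exhibiting an integer vector in $\ker M^T$ that is odd on one component's crossings and even on the other's, identify $\ker \ol{M^T}$ over $\ff_2$ as the span of the two component indicator vectors, and lift an indicator to $\ker M^T \cap \zz^n$ via injectivity of $K/2K \to (\zz/2)^n$. The only difference is cosmetic: you justify that injectivity by saturation of the kernel lattice, where the paper derives it from $\Tor^\zz_2(\zz/2,C)=0$ using that $\zz$ has global dimension one.
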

\begin{proof} It suffices to show that there is an element $u \in \ker(M^T)$ such that no representative of $v$ in $\rr^n$ can be orthogonal to $u$. Note that $v = (\pi,0,0,\ldots, 0, \pi,0,0,\ldots 0) \in (\rr/2 \pi \zz)^n$ where the non-zero entries are in crossings from different components. Thus, the representatives of $v$ are of the form $(\pi+2 \pi a_1,2 \pi a_2,\ldots, 2 \pi a_{k-1}, \pi+2 \pi a_k,2 \pi a_{k+1},\ldots)$ with $a_i \in \zz$. 

It then suffices to find $u \in  \ker(M^T) \subset \zz^n \subset \rr^n$ such that the entries of $u$ are even on one component and odd on the other; with that, by a parity argument, we could see that $u$ cannot be orthogonal to any representative of $v$.

Since $\det(M)=0$, the rank of $M^-$, and therefore the rank of $M$ is at most $n-2$, that is the kernel of $M^T$ has rank at least $2$.

Consider the exact sequence
\[0 \to K \to \zz^n \oset{M^T}\to \zz^n \to C \to 0\]
where $K$ is the kernel and $C$ is the cokernel of $M^T$. We know that $K$ has rank at least $2$. However, $K$, being a submodule of $\zz^n$ is a free $\zz$ module, so this sequence is a projective resolution of $C$. Thus, the homology groups of
\[0 \to K \otimes \zz/2 \to (\zz/2\zz)^n  \oset{\ol{M^T}}\to (\zz/2\zz)^n \to 0\]
compute the $\Tor^\zz(\zz/2,C)$, where the homology at $K \otimes \zz/2$ is $\Tor^\zz_2(\zz/2, C)$. Note that $\zz$ has global dimension 1, so $\Tor^\zz_2(\zz/2, C)=0$. Thus $K \otimes \zz/2 \to (\zz/2\zz)^n$ in the above sequence is injective, and, indeed is an injection into the kernel of $\ol{M^T}$.

However, $\ol{M^T}$ consists of two diagonal blocks that look like
\[\begin{bmatrix}
    1       & 0 & 0  & 0 &  \dots & 1  \\
    1       & 1 & 0  & 0 &  \dots & 0  \\
     \vdots & \vdots &\vdots & \vdots & \ddots & \vdots \\ 
   0 & 0        & 0 &0  \dots & 1 & 0\\
   0 & 0        & 0 &0  \dots & 1 & 1\\
\end{bmatrix}\]
where one block corresponds to each component of the link, so it is easy to see that its kernel has rank $2$, with kernel generated by $(1,1,\ldots 1, 0,0,\ldots 0)$, and $(0,0,\ldots 0, 1,1,\ldots 1)$, ie, vectors that are $1$ on one component and $0$ on the other.

Thus, the injectivity of the map $K \otimes \zz/2 \to (\zz/2\zz)^n$ implies that it is an isomorphism to the kernel of $\ol{M^T}$ in $(\zz/2\zz)^n$. However, the map comes from looking at generators of $K \subset \zz^n$ modulo $2$, so we get that looking at $K$ modulo $2$ in $(\zz/2\zz)^n$, we get
\[\Span((1,1,\ldots 1, 0,0,\ldots 0),(0,0,\ldots 0, 1,1,\ldots 1)) \subset (\zz/2\zz)^n.\]
Consequently, some element of $K$ must be odd on one component and even on the other, as desired.
\end{proof}

When $L$ has more than two components, and $\det(L)=0$, there could be situations where $R_B(L,\omega)$ is the same as $R_B(L,\emptyset)$ for $\omega$ going between some components, but $R_B(L,\omega)$ is empty for other $\omega$. For example, consider the link in Figure \ref{det0_eg}. If we consider only the black, blue, and green components (ignoring the red component), we get a three component link. For this link, $\omega$ between the blue and black components and $\omega$ between the blue and green components have $R_B(L,\omega) = \emptyset$, whereas for $\omega$ between black and green components, $R_B(L,\omega)$ is not empty.

However, we consider only the black, blue, and red components (ignoring the green component), we get a three component link for which if $\omega$ goes between any two components, we get $R_B(L,\omega) = \emptyset$.

\begin{figure}[H]
\centering
\includegraphics[width=100mm]{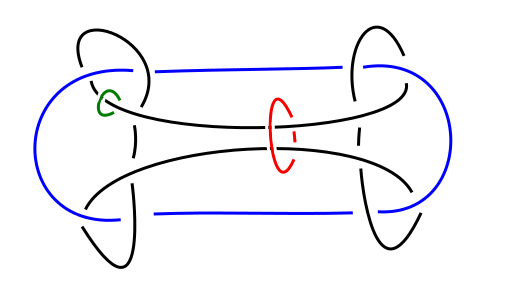}
\caption{}
\label{det0_eg}
\end{figure}

The question of whether $\omega$ has $R_B(L,\omega) = \emptyset$ or $R_B(L,\omega) = R_B(L, \emptyset)$ relates to the question of whether the corresponding elements of $\ker(M^T)$ over $\zz/2\zz$ we constructed in the proof above lift to elements of $\ker(M^T)$ over $\zz$.


\begin{thebibliography}{1}
\bibitem{BL} John A. Baldwin and Adam Simon Levine. {\em A combinatorial spanning tree model for knot Floer homology.} Advances in Mathematics, Volume 231, Issues 3-4, Pages 1886-1939, 2012.

\bibitem{BLS} John Baldwin, Adam Levine, and Sucharit Sarkar. {\em Khovanov homology and knot Floer homology for pointed links.}  Journal of Knot Theory and Its Ramifications. December, 2015.

\bibitem{ChampKof} Abhijit Champanerkar and Ilya Kofman. {\em Spanning trees and Khovanov homology.} Proceedings of the American Mathematical Society, Volume 137, Issue 6, Pages 2157-2167, 2009.

\bibitem{Daemi} Aliakbar Daemi. {\em Abelian gauge theory, knots and odd khovanov homology.} ArXiv e-prints, August, 2015.

\bibitem{Greene} Joshua Greene. {\em A spanning tree model for the Heegaard Floer homology of a branched double-cover.} Journal of Topology, Volume 6, Issue 2, Pages 525-567, 2013.


\bibitem{Jae13} Thomas C. Jaeger. {\em A remark on Roberts’ totally twisted Khovanov homology.} Journal of Knot Theory and Its Ramifications, Volume 22, Issue 6, 2013.

\bibitem{Klassen} Eric P. Klassen {\em Representations of Knot Groups in SU(2)}, Transactions of the AMS, Volume 326. Number 2. Pages 795-828, August, 1991.



\bibitem{KM_filtrations} Peter B. Kronheimer, Tomasz S. Mrowka, {\em Filtrations on instanton homology}, Quantum Topology, Volume 5, Issue 1, pp. 61-97, 2014.


\bibitem{KM_KHI_Alexander} Peter B. Kronheimer, Tomasz S. Mrowka,  {\em Instanton Floer homology and the Alexander polynomial}. Algebraic and Geometric Topology.

\bibitem{KM_unknot} Peter B. Kronheimer, Tomasz S. Mrowka, {\em Khovanov homology is an unknot-detector}, Publications mathématiques de l'IH\'{E}S, Volume 113, Pages 97-208, 2011.


\bibitem{Maclane} S. MacLane, {\em Homology.} Die Grundlehren der mathematischen Wissenschaften, Bd. 114. Academic Press Inc., Publishers, New York, 1963.

\bibitem{MO_quasi} Ciprian Manolescu, Peter Ozsv\'ath, {\em On the Khovanov and knot Floer homologies of quasi-alternating links}, Proceedings of G\"{o}kova Geometry-Topology Conference 2007, Pages 60-81. G\"{o}kova Geometry Topology Conference, G\"{o}kova, 2008.


\bibitem{MI_virtual} Vassily O. Manturov, Denis P. Ilyutko, {\em Virtual Knot Theory - The State of the Art}. World Scientific (2012), ISBN: 978-981-4401-12-8

\bibitem{Rob15} Lawrence P. Roberts. {\em Totally twisted Khovanov homology.} Geometry \& Topology, Volume 19, Issue 1, Pages 1-59, 2015.


\bibitem{SS_tqa} Christopher Scaduto, Matthew Stoffregen, {\em Two-fold quasi-alternating links, Khovanov homology and instanton homology}, 2016. 

\bibitem{Wehrli} Stephan Wehrli. {\em A spanning tree model for Khovanov homology.} Journal of Knot Theory and its Ramifications, Volume 17, Issue 12, Pages 1561-1574, 2008.


\bibitem{Zentner} Raphael Zentner, {\em A class of knots with simple $SU(2)$ representations}, Selecta Mathematica, January 2015. 


\end{thebibliography}
\end{document}